\def\titlerunning#1{\gdef\titrun{#1}}
\def\author#1{\gdef\autrun{\def\and{\unskip, }#1}\gdef\@author{#1}}
\def\address#1{{\def\and{\\\hspace*{18pt}}\renewcommand{\thefootnote}{}%
\footnote {#1}}%
\markboth{\autrun}{\titrun}}
\def\email#1{e-mail: #1}
\def\keywords#1{\par\medskip
\noindent\textbf{Keywords.} #1}
\@date \else {\vskip3ex \centering\footnotesize\@date\par\vskip1ex}\fi
\else \@footnotetext{\@setdate}\fi}
\newtheorem{thm}{Theorem}[section]
\newtheorem{cor}[thm]{Corollary}
\newtheorem{lemma}[thm]{Lemma}
\newtheorem{remark}[thm]{Remark}
\newcommand{\R}{{\mathbb{R}}}
\newcommand{\N}{{\mathbb{N}}}
\newcommand{\vp}{\varphi}
\newcommand{\osc}{\operatornamewithlimits{osc}}
\newcommand{\D}{\nabla}
\newcommand{\La}{\triangle}
\newcommand{\bs}{\backslash}
\begin{document}

\baselineskip=17pt

\titlerunning{}

\title{Local Continuity and Asymptotic Behaviour of  Degenerate Parabolic Systems}

\author{ Sunghoon Kim 
\and Ki-Ahm Lee }

\date{}

\maketitle

\address{ 
S. Kim (\Letter) :
Department of Mathematics, School of Natural Sciences, The Catholic University of Korea,\\
43 Jibong-ro, Wonmi-gu, Bucheon-si, Gyeonggi-do, 14662, Republic of Korea ;\\
\email{math.s.kim@catholic.ac.kr}
\and
Ki-Ahm Lee :
Department of Mathematical Sciences, Seoul National University, Gwanak-ro 1, Gwanak-Gu, Seoul, 08826, South Korea \&
Korea Institute for Advanced Study, Seoul 02455, Korea;\\
\email{ kiahm@snu.ac.kr }
}

\begin{abstract}
We study the local H\"older continuity and the asymptotic behaviour of solution, $\bold{u}=(u^1,\cdots, u^k)$, of the degenerate system
\begin{equation*}
u^i_t=\nabla\cdot\left(m\,U^{m-1}\nabla u^i\right) \qquad \text{for $m>1$ and $i=1,\cdots,k$ }
\end{equation*}
which describes the population densities of $k$-species whose diffusions are determined by their total population density $U=u^1+\cdots+u^k$. For the local H\"older continuity, we adopt the intrinsic scaling and iteration arguments of DeGiorgi, Moser, and Dibenedetto. Under some regularity conditions, we also prove that the population density of $i$-th species with the population $M_i$ converges in $C^{\infty}_s$ to the function $\frac{M_i}{M}\mathcal{B}_M(x,t)$ as $t\to \infty$ where $\mathcal{B}_M$ is the Barenblatt profile of the standard porous medium equation with $L^1$ mass $M=M_1+\cdots+M_k$. As a consequence of asymptotic behaviour, it is shown that each density becomes a concave function after a finite time.
\keywords{Local Continuity, Asymptotic Behaviour, Degenerate Equation, Eventual Concavity}
\end{abstract}

\setcounter{equation}{0}
\setcounter{section}{0}

\section{Introduction}\label{section-intro}
\setcounter{equation}{0}
\setcounter{thm}{0}
For a given number $k\in\N$ , let $u^i\geq 0$, $\left(i=1,\cdots,k\right)$, represent the population density of $i$-th species and  $U$ be the total density of all species, i.e., 
\begin{equation}\label{def-total-population-U}
U=u^1+u^2+\cdots+u^{k}=\sum_{i=1}^{k}u^i.
\end{equation}
As a simplest case, we consider a system whose diffusion coefficients are controlled by the total population density $U$, i.e., let $\bold{u}=\left(u^1,\cdots, u^k\right)$ be a solution of 
\begin{equation}\label{eq-pme_system}\tag{SPME}
u^i_t=\nabla\cdot\left(m\,U^{m-1}\nabla u^i\right) \qquad \text{for $m>1$ and $i=1,\cdots,k$ }.
\end{equation}
By \eqref{def-total-population-U}, $U$ satisfies the standard porous medium equation (shortly, PME)
\begin{equation*}\label{eq-PME-satisfied-by-U-total-species}
U_t=\sum_{i=1}^k\left(u^i\right)_t=\sum_{i=1}^k\nabla\cdot\left(m\,U^{m-1}\nabla u^i\right)=\nabla\cdot\left(m\,U^{m-1}\nabla U\right)=\La U^m  \qquad \forall (x,t)\in\R^n\times(0,\infty)
\end{equation*} 
and
\begin{equation}\label{eq-natural-condition-between-density-of-one-species-and-total-density}
u^i(x,t)\leq U(x,t) \qquad \forall (x,t)\in\R^n\times\left[0,\infty\right),\,\,i=1,\cdots,k.
\end{equation}
Moreover, by the regularity theory for the standard PME it is well known that the function $U$ is locally bounded by $\left\|U\left(x,0\right)\right\|_{L^{1}\left(\R^n\right)}$ in $\R^n\times\left(0,\infty\right)$ (See Lemma \ref{lem-cf-chapter-9-of-Va1} for a detail), i.e., there exists a constant $C>0$ such that
\begin{equation*}
\left|u^i(x,t)\right|\leq \left|U\left(x,t\right)\right|\leq C\frac{\left\|U\left(x,0\right)\right\|_{L^1\left(\R^n\right)}}{t^{\frac{n}{n(m-1)+2}}}, \qquad \forall (x,t)\in\R^n\times\left(0,\infty\right).
\end{equation*}
Since both $u^i$ and $U$ satisfy the same equation, the relation \eqref{eq-natural-condition-between-density-of-one-species-and-total-density} can be obtained by their initial conditions, i.e.,
\begin{equation*}
u^i(x,0)\leq U(x,0) \qquad \forall x\in\R^n,\,\,i=1,\cdots,k.
\end{equation*}
\indent Let $0\leq U_0\in L^1\left(\R^n\right)\cap L^{1+m}\left(\R^n\right)$ be a compactly supported function. As a general case which covers above situation, we are going to study the local continuity and asymptotic behaviour of the problem
\begin{equation}\label{eq-for-each-population-u-i}\tag{$PME_u$}
\begin{cases}
\begin{aligned}
u_t&=\nabla\cdot\left(m\,U^{m-1}\nabla u\right)\qquad \qquad\mbox{in $\R^n\times(0,\infty)$}\\
u(x,0)&=u_0(x) \qquad \qquad \qquad \qquad\forall x\in\R^n
\end{aligned}
\end{cases}
\end{equation}
in the range of exponents $m>1$, with initial data $u_0$ satisfying
\begin{equation}\label{eq-basic-property-of-u-smaller-than-U-at-zero}
0\leq u_0(x)\leq U_0(x) \qquad \forall x\in\R^n
\end{equation}
where the diffusion coefficients $U$ is the solution of 
\begin{equation}\label{eq-PME-satisfied-by-U-total-species}\tag{$PME$}
\begin{cases}
\begin{aligned}
U_t&=\nabla\cdot\left(m\,U^{m-1}\nabla U\right)=\La U^m \qquad\mbox{in $\R^n\times(0,\infty)$}\\
U(x,0)&=U_0(x) \qquad \qquad \qquad \qquad \qquad\forall x\in\R^n.
\end{aligned}
\end{cases}
\end{equation}
\indent If $U$ is equivalent to the solution $u$ in the sense that
\begin{equation*}
U(x,t)=cu^{\beta}(x,t) \qquad \forall(x,t)\in\R^n\times\left[0,\infty\right)
\end{equation*}
for some constants $c>0$ and $\beta\in\R^+$, then the equation appears in many physical phenomenons \cite{Ar}, \cite{DK}, \cite{Va1}. When $\beta (m-1)+1>1$, it is well known as the porous medium equation which arises in describing the flow of an ideal gas through a homogeneous porous medium \cite{Ar}. Since $\beta(m-1)>0$, the porous medium equation becomes degenerate when $u=0$ and this degeneracy let the flow propagate slowly with finite speed. This implies that there exists an interface or free boundary which separates regions where $u>0$ from regions where $u=0$, \cite{Va1}.  When $\beta (m-1)+1=1$ and $\beta (m-1)+1<1$, we call them the heat equation and the fast diffusion equation, respectively. Similar to the porous medium equation, the fast diffusion equation arises in many famous flows such as Yamabe flow and Ricci flow. we refer the readers to the papers \cite{PS} for Yamabe flow and to the papers \cite{Wu} for Ricci flow.\\
\indent There are many studies on the regularity and asymptotic behaviour for the porous medium and fast diffusion equations. We refer the readers to the papers \cite{CF1}, \cite{CF2}, \cite{CF3}, \cite{CW}, \cite{Di}, \cite{HU}, \cite{KL3}, \cite{LV}  for regularity and to the papers \cite{BBDGV}, \cite{DH},  \cite{HK1}, \cite{HK2}, \cite{HKs}, \cite{Va2} for asymptotic behaviours of solutions of porous medium and fast diffusion equations. \\
\indent Corresponding to the porous medium type equation, we can also derive the $p$-laplacian equation from \eqref{eq-for-each-population-u-i} by considering the diffusion coefficients $U^{m-1}$ as the gradients of the solution, i.e.,
\begin{equation*}
U^{m-1}=c\left|\nabla u\right|^{p-2} \qquad \mbox{in $\R^n\times\left[0,\infty\right)$}
\end{equation*}
for some constant $c>0$ and $p>1$. Large number of literatures on the local continuity and asymptotic behaviour of solutions of $p$-laplacian equation can be also found. We refer the readers to the papers \cite{CD}, \cite{DF} for various estimates about local continuity and to the paper \cite{KV} for the asymptotic behaviours of solution of $p$-laplacian equation.\\
\indent As the first result of this paper, we will investigate the local continuity of the parabolic partial differential equation 
\begin{equation}\label{eq-standard-form-for-local-continuity-estimates-intro-1}
u_t=\nabla\cdot\left(mU^{m-1}\mathcal{A}\left(\nabla u,u,x,t\right)+\mathcal{B}\left(u,x,t\right)
\right), \qquad \left(m>1\right)
\end{equation}
which is a generalized version of \eqref{eq-for-each-population-u-i}. The measurable functions $\mathcal{A}$ and $\mathcal{B}$ are assumed to satisfy the following conditions 
\begin{equation}\label{eq-condition-1-for-measurable-functions-mathcal-A-and-mathcal-B}
\mathcal{A}\left(\nabla u,u,x,t\right)\cdot\nabla u\geq C_1\left|\nabla u\right|^2-u^2f_1
\end{equation}
\begin{equation}\label{eq-condition-2-for-measurable-functions-mathcal-A-and-mathcal-B}
\left|\mathcal{A}\left(\nabla u,u,x,t\right)\right|\leq C_2\left|\nabla u\right|+uf_2
\end{equation}
\begin{equation}\label{eq-condition-3-for-measurable-functions-mathcal-A-and-mathcal-B}
\left|\mathcal{B}\left(u,x,t\right)\right|\leq u^bf_3
\end{equation}
for positive constants $b\geq 1$, $C_i>0$ and functions $0\leq f_i\in L^2\cap L^{\infty}$, $\left(i=1,2,3\right)$. Among the methods for the local continuity, we will take the oscillation argument which will be used often for the H\"older regularity of solutions. The main step of the oscillation argument is to prove that the difference between supremum and infimum on a chosen set decreases proportionally as the radius of the set shrinks to half (Oscillation Lemma). To make our methods work, some conditions are needed to be imposed on $U$, which cover \eqref{eq-natural-condition-between-density-of-one-species-and-total-density}, to deal with the difficulties stem from the degeneracy. In that point of view, we generally assume that the function $U$ satisfies the following assumption:
\begin{description}
\item[\textbf{Assumption I :}]There exist uniform constants $\lambda>0$ and $\beta\geq 0$ such that
\begin{equation}\label{eq-assumption-through-the-regularity-section-0}
\begin{aligned}
\lambda u^{\,\beta}\leq U\qquad \mbox{locally in $\R^n\times\left(0,\infty\right)$}.
\end{aligned}
\end{equation}
\end{description}
If some conditions are imposed on the measurable functions $\mathcal{A}$ and $\mathcal{B}$, and if the function $U$ is a subsolution of a parabolic partial differential equation which contains $\mathcal{A}$ and $\mathcal{B}$ in it, then $U$ may  be locally bounded in $\R^n\times\left(0,\infty\right)$. (An example of local boundedness of $U$ is presented in Lemma \ref{eq-example-boundedness-of-diffusion-coefficients-of-generaized-equation}). Hence we can also assume the following condition:
\begin{description}
\item[\textbf{Assumption II :}]There exists an uniform constant $\Lambda<\infty$  such that
\begin{equation}\label{eq-assumption-through-the-regularity-section-0}
\begin{aligned}
U\leq \Lambda\qquad \mbox{locally in $\R^n\times\left(0,\infty\right)$}.
\end{aligned}
\end{equation}
\end{description}

With this assumption, we now state the first result of our paper.
\begin{thm}\label{eq-local-continuity-of-solution}
Under the \textbf{Assumption I and II}, any solution of \eqref{eq-standard-form-for-local-continuity-estimates-intro-1} is locally continuous in $\R^n\times\left(0,\infty\right)$.
\end{thm}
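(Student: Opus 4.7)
The plan is to follow the De Giorgi--Moser--DiBenedetto alternative method, adapted to the intrinsic scaling dictated by the degeneracy of the coefficient $U^{m-1}$. Fix an interior point $(x_0,t_0)$ and, for small $R>0$, work inside an intrinsic cylinder $Q(\theta,R)=B_R(x_0)\times(t_0-\theta R^2,t_0)$. Writing
\[
\mu^+=\sup_{Q(\theta,R)}u,\qquad \mu^-=\inf_{Q(\theta,R)}u,\qquad \omega=\mu^+-\mu^-=\osc_{Q(\theta,R)}u,
\]
I would take the time dilation $\theta\sim\omega^{-\beta(m-1)}$. This is the natural choice because Assumption I provides the lower coercivity $U^{m-1}\geq\lam^{m-1}u^{\beta(m-1)}$ while Assumption II yields the uniform upper bound $U^{m-1}\leq\Lam^{m-1}$; after intrinsic dilation the equation behaves like a uniformly parabolic equation with bounded measurable coefficients on the set where $u\sim\omega$.

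The core step is an Oscillation Lemma producing $\eta\in(0,1)$, depending only on the data, with $\osc_{Q(\theta,R/2)}u\leq\eta\,\omega$. First I would derive Caccioppoli energy inequalities and DiBenedetto-type logarithmic estimates for the truncations $(u-k)_\pm$ by testing \eqref{eq-standard-form-for-local-continuity-estimates-intro-1} against $\zeta^2(u-k)_\pm$ and against suitable logarithmic analogues supported in $Q(\theta,R)$. The structure conditions \eqref{eq-condition-1-for-measurable-functions-mathcal-A-and-mathcal-B}--\eqref{eq-condition-3-for-measurable-functions-mathcal-A-and-mathcal-B}, together with the $L^2\cap L^\infty$ hypothesis on $f_1,f_2,f_3$, allow the lower order contributions to be absorbed into the principal energy term $U^{m-1}|\D(u-k)_\pm|^2$ in the standard way.

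The Oscillation Lemma then splits into two alternatives. If for some time slice $t^*\in(t_0-\theta R^2,t_0)$ the measure of $\{u(\cdot,t^*)\leq\mu^-+\omega/2\}\cap B_R$ is small, I would run a De Giorgi iteration on $(u-k)_-$ with $k$ slightly above $\mu^-$; on the relevant super-level sets $u\geq c\omega$, so Assumption I gives $U^{m-1}\geq\lam^{m-1}(c\omega)^{\beta(m-1)}$, and the intrinsic time stretch exactly compensates this weight. A finite iteration brings the measure of $\{u\leq\mu^-+\omega/2^N\}$ below any prescribed fraction of $|B_R|$, and a second De Giorgi iteration in the spirit of \cite{Di} upgrades this into a pointwise lower bound on the smaller cylinder. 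The opposite alternative, treated with $(u-k)_+$ near $\mu^+$, uses Assumption II to control the coefficient from above and uses the logarithmic estimate to propagate the smallness of the super-level set forward in time. Either alternative yields a uniform reduction of the oscillation.

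Finally, I would iterate the Oscillation Lemma along a geometric sequence of shrinking intrinsic cylinders $Q(\theta_j,R_j)$ with $R_j=2^{-j}R$, recomputing at each step the dilation $\theta_j\sim\omega_j^{-\beta(m-1)}$ from the current oscillation $\omega_j$; a standard bookkeeping argument produces a logarithmic, hence H\"older, modulus of continuity for $u$. I expect the principal obstacle to be the degenerate alternative: because $U^{m-1}$ is controlled below only through $u^{\beta(m-1)}$, the scaling is genuinely intrinsic, and the interplay between the varying $\theta_j$, the smallness constants in the De Giorgi iteration, and the Sobolev embeddings used at each level has to be tracked carefully so that the oscillation decay depends only on the structural data and not on the base radius $R$ or on the particular cylinder.
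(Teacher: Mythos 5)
Your overall architecture (intrinsic time-scaling $\theta\sim\omega^{-\beta(m-1)}$, Caccioppoli and logarithmic energy estimates, the two DiBenedetto alternatives, an Oscillation Lemma, and an iteration) is the same as the paper's, and it is the right approach. But the final paragraph contains a genuine gap that the paper goes out of its way to flag.

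You assert that the iteration "produces a logarithmic, hence Hölder, modulus of continuity," and that this works because "the oscillation decay depends only on the structural data and not on the base radius $R$ or on the particular cylinder." Neither half of this is right under \textbf{Assumption I and II} alone. The decay factor in the Oscillation Lemma is not a pure structural constant: it depends on the ratio $\Lambda/\theta_0^{\beta}$, where $\theta_0$ is proportional to the current oscillation $\omega$. This ratio deteriorates as $\omega_j\to 0$ along the iteration (unless $\beta=0$, i.e.\ the nondegenerate case), so the shrinking factor $\sigma_0$ for the oscillation is \emph{not} uniform over $j$, and no geometric decay $\omega_j\lesssim\sigma_0^j\omega_0$ with a fixed $\sigma_0$ is available. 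Consequently one cannot extract a Hölder modulus this way; the paper's Remark~\ref{remark-last-in-continuity-some-case-just-conti-under-some-condition-up-to-holder-estimate} makes exactly this point and refers to Urbano \cite{Ur} for how to still squeeze out (unquantified) continuity from an oscillation reduction whose factor degenerates. To restore Hölder continuity one needs the stronger two-sided equivalence $c\,u^{\beta}\le U\le C\,u^{\beta}$ of Remark~\ref{eq-remark-for-holder-estimates-for-function-u-i-s-in-solution-bold-u}, which keeps $\Lambda/\theta_0^{\beta}$ bounded along the iteration; that is precisely what is used in the proof of Theorem~\ref{cor-local-holder-estimates-of-u-i-s-in-solution-bold-u}, not of Theorem~\ref{eq-local-continuity-of-solution}. (As a side remark, "logarithmic, hence Hölder" is also backwards: a logarithmic modulus is strictly weaker than Hölder.) So your proposal proves the right lemmas but overstates the conclusion and, more importantly, does not explain how to pass from a non-uniform oscillation reduction to continuity, which is the actual crux here.
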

\begin{remark}\label{eq-remark-for-holder-estimates-for-function-u-i-s-in-solution-bold-u}
 If $U$ is equivalent to $u^{\beta}$ in $\R^n\times(0,\infty)$, i.e., there exist some constants $0<c\leq C<\infty$ such that
\begin{equation*}
cu^{\beta}\leq U\leq Cu^{\beta} \qquad \mbox{in $\R^n\times(0,\infty)$},
\end{equation*}
then we can find the modulus of continuity of solution of \eqref{eq-standard-form-for-local-continuity-estimates-intro-1}, i.e., the solution $u$ is locally H\"older continuous in $\R^n\times\left(0,\infty\right)$.
\end{remark}
\indent As a consequence of Remark \ref{eq-remark-for-holder-estimates-for-function-u-i-s-in-solution-bold-u}, we can have the following local estimates of solution $\bold{u}=\left(u^1,\cdots,u^k\right)$ of \eqref{eq-pme_system}.
\begin{thm}\label{cor-local-holder-estimates-of-u-i-s-in-solution-bold-u}
Let $m>1$ and let $\bold{u}=\left(u^1,\cdots,u^k\right)$ be solution of \eqref{eq-pme_system}  with initial data $u^i_0\in L^1\left(\R^n\right)\cap L^{1+m}\left(\R^n\right)$ nonnegative and compactly supported for all $1\leq i\leq k$. Then the function $u^i$, $\left(1\leq i\leq k\right)$, is locally H\"older continuous in $\R^n\times(0,\infty)$. Especially, all components of solution $\bold{u}$ have the same modulus of continuity.  
\end{thm}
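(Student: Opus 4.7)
The plan is to apply Theorem \ref{eq-local-continuity-of-solution} (local continuity) and Remark \ref{eq-remark-for-holder-estimates-for-function-u-i-s-in-solution-bold-u} (its H\"older upgrade) to each component $u^i$ of $\bold{u}$, regarded as a solution of the model problem \eqref{eq-standard-form-for-local-continuity-estimates-intro-1} with $\mathcal{A}(\nabla u, u, x, t)=\nabla u$ and $\mathcal{B}\equiv 0$. With this choice the structural hypotheses \eqref{eq-condition-1-for-measurable-functions-mathcal-A-and-mathcal-B}--\eqref{eq-condition-3-for-measurable-functions-mathcal-A-and-mathcal-B} are trivially satisfied with $C_1=C_2=1$, $b=1$ and $f_1=f_2=f_3\equiv 0$, so the entire argument reduces to verifying \textbf{Assumptions I--II} for the pair $(u^i,U)$ and then pinning down a H\"older modulus that does not depend on $i$.

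To check \textbf{Assumption II}, observe that $U=\sum_{j=1}^{k}u^j$ satisfies the standard porous medium equation \eqref{eq-PME-satisfied-by-U-total-species} with initial datum $U_0\in L^1(\R^n)\cap L^{1+m}(\R^n)$ nonnegative and compactly supported; the $L^{\infty}$ smoothing estimate recalled in Lemma \ref{lem-cf-chapter-9-of-Va1} then gives a uniform bound $U\leq \Lambda$ on any prescribed compact subset of $\R^n\times(0,\infty)$. To check \textbf{Assumption I}, note that $u^i$ and $U$ solve the same linear-in-$w$ parabolic equation $w_t=\nabla\cdot(mU^{m-1}\nabla w)$ with $0\leq u^i_0\leq U_0$; the comparison principle yields $u^i\leq U$ everywhere, which is precisely $\lambda u^i\leq U$ with $\lambda=1$ and $\beta=1$. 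Theorem \ref{eq-local-continuity-of-solution} then gives local continuity of each $u^i$ on $\R^n\times(0,\infty)$.

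For the H\"older upgrade and the uniformity across $i$, we apply Remark \ref{eq-remark-for-holder-estimates-for-function-u-i-s-in-solution-bold-u} in the following form: the intrinsic cylinders driving the oscillation lemma that underlies Theorem \ref{eq-local-continuity-of-solution} are rescaled by the diffusion coefficient $U^{m-1}$, which is the \emph{same} function for every index $i$. Running the De~Giorgi--Moser--DiBenedetto iteration in these common cylinders produces a geometric decay of the form $\osc_{Q_{\rho/2}}u^i\leq \eta\,\osc_{Q_\rho}u^i+(\text{lower order})$, with $\eta\in(0,1)$ depending only on $m$, $n$, and on $(\lambda,\beta,\Lambda)$ from \textbf{Assumptions I--II}. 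Since all of these quantities are inherited from $U$ alone, the resulting H\"older exponent and modulus are common to all components $u^1,\ldots,u^k$, and iterating $\rho\mapsto\rho/2$ concludes the proof.

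The main obstacle is precisely this last uniformity statement. One has to revisit the proof of Theorem \ref{eq-local-continuity-of-solution} and verify that every constant introduced along the way can be controlled by $U$ alone rather than by the particular component $u^i$: the lower inequality in \textbf{Assumption I} enters only through the scalar bound $\lambda u^i\leq U$ (the same for every $i$), while the upper bound in \textbf{Assumption II} is intrinsic to $U$. Once this careful accounting through the intrinsic-scaling argument is carried out, the local H\"older continuity of every $u^i$ and the equality of their moduli of continuity follow at the same time.
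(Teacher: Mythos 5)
Your reduction to Theorem~\ref{eq-local-continuity-of-solution} with $\mathcal{A}=\nabla u$, $\mathcal{B}\equiv 0$, $\lambda=\beta=1$ is exactly the paper's first step, and the verification of \textbf{Assumptions~I--II} is fine. The gap is in the H\"older upgrade. You invoke Remark~\ref{eq-remark-for-holder-estimates-for-function-u-i-s-in-solution-bold-u}, but its hypothesis is two-sided equivalence $cu^{\beta}\leq U\leq Cu^{\beta}$, and for the system one cannot verify $U\leq C u^i$: at a point where $u^i$ is small compared with other components, the ratio $U/u^i$ is unbounded. So Remark~\ref{eq-remark-for-holder-estimates-for-function-u-i-s-in-solution-bold-u} does not apply as stated.

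The deeper problem is your claim that the decay factor $\eta$ (the paper's $\sigma_0$) ``depends only on $m$, $n$, and $(\lambda,\beta,\Lambda)$,'' which would give a uniform geometric rate. That is precisely what Remark~\ref{remark-last-in-continuity-some-case-just-conti-under-some-condition-up-to-holder-estimate} warns against: under \textbf{Assumptions~I--II} alone, $\sigma_0$ depends on the ratio $\Lambda/\theta_0^{\,\beta}$. In your setup $\Lambda$ is a fixed $L^\infty$ smoothing bound while $\theta_0=\omega/4$ shrinks at each iteration step, so $\Lambda/\theta_0^{\,\beta}\to\infty$ and the constants degenerate; you get continuity but no modulus. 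The paper's proof instead exploits the special structure of the system: letting $\omega_{_M}=\max_i\omega^i$ and $\theta_0=\omega_{_M}/4$, one has $U=u^1+\cdots+u^k\leq k\omega_{_M}=4k\theta_0$ \emph{on the intrinsic cylinder itself}, so the working upper bound is not the global $\Lambda$ but $\Lambda=4k\theta_0$, giving $\Lambda/\theta_0^{\,\beta}=4k$ at every scale. It is this scale-by-scale renormalization of $\Lambda$, not the fixed smoothing bound, that makes $\sigma_0$ uniform and produces both the H\"older exponent and the common modulus across components. Your last paragraph correctly senses that ``every constant must be controlled by $U$ alone,'' but does not supply the mechanism that actually achieves this.
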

\begin{remark}
The local continuity of Theorem \ref{eq-local-continuity-of-solution} and Theorem \ref{cor-local-holder-estimates-of-u-i-s-in-solution-bold-u} can be extended to the fast diffusion type system, i.e., Theorem \ref{eq-local-continuity-of-solution} and Theorem \ref{cor-local-holder-estimates-of-u-i-s-in-solution-bold-u} also holds for the range of exponents $0<m<1$.
\end{remark}
\indent As the second result of this paper, we will deal with the asymptotic behaviour of \eqref{eq-for-each-population-u-i}. Denote by $\mathcal{B}_M$ the self-similar Barenblatt solution of the porous medium equation with $L^1$ mass $M>0$. If the function $U_0$ has the mass $M$ in $L^1\left(\R^n\right)$, then by \cite{LV} it is well known that
\begin{equation*}
U(\cdot,t)\to \mathcal{B}_M(\cdot,t) \qquad \mbox{in $C^{\infty}$} \quad \mbox{as $t\to\infty$}
\end{equation*}
under some regularity conditions of $U$. If there is a limit of the solution $u$ of \eqref{eq-for-each-population-u-i}, then the limit $u_{\infty}$ will satisfy
\begin{equation}\label{eq-expectation-equation-for-limit-solution-as-t-to-infty}
\left(u_{\infty}\right)_t=\nabla\cdot\left(m\,\mathcal{B}_{M}^{m-1}\nabla u_{\infty}\right)\qquad \mbox{and} \qquad u_{\infty}\leq \mathcal{B}_{M} \qquad \forall (x,t)\in\R^n\times[0,\infty). 
\end{equation}
Since $c\mathcal{B}_M$ is also a solution of \eqref{eq-expectation-equation-for-limit-solution-as-t-to-infty} for any constant $c\in\R^+$, it could be strongly expected that
\begin{equation*}
u_{\infty}=\frac{\left\|u_0\right\|_{L^1}}{M}\mathcal{B}_M
\end{equation*}
if the solution $u$ maintains its $L^1$-mass. Under this expectation, we are going to state our second result of paper.
\begin{thm}\label{eq-convergence-in-L-1-and-L-infty-of-u-to-some-constant-Barenblatt}
Let $U$ be a solution of \eqref{eq-PME-satisfied-by-U-total-species}  with initial data $U_0$ nonnegative, integrable and compactly supported. Suppose that $u$ is a solution of \eqref{eq-for-each-population-u-i} satisfying \eqref{eq-basic-property-of-u-smaller-than-U-at-zero}. Then
\begin{equation}\label{eq-convergence-in-L-1-from-u-to-barrenblatt-M-0-over-M-B-M}
\lim_{t\to\infty}\left\|u\left(\cdot,t\right)-\frac{M_0}{M}\mathcal{B}_{M}\left(\cdot,t\right)\right\|_{L^1}=0
\end{equation}
and
\begin{equation}\label{eq-convergence-in-L-infty-from-u-to-barrenblatt-M-0-over-M-B-M}
\lim_{t\to\infty}t^{a_1}\left|u(x,t)-\frac{M_0}{M}\mathcal{B}_M(x,t)\right|=0 \qquad \mbox{uniformly in $\R^n$}
\end{equation}
for some constant $a_1$ where  $\left\|U_0\right\|_{L^1\left(\R^n\right)}=M$ and $\left\|u_0\right\|_{L^1\left(\R^n\right)}=M_0$.
\end{thm}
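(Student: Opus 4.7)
The strategy is to combine the known asymptotics of the driver $U$ with a scaling/compactness argument for $u$, and then to identify the limit profile by a uniqueness step; the only delicate point is to pin down the correct multiplicative constant.

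The first preliminary is $L^1$ mass conservation, $\|u(\cdot,t)\|_{L^1(\R^n)}=M_0$ for all $t\geq 0$. Since $U_0$ is compactly supported, $U$ has compact support at each time by finite speed of propagation for \eqref{eq-PME-satisfied-by-U-total-species}. Both $U$ and $u$ solve the linear equation with coefficient $mU^{m-1}$, and $u_0\leq U_0$ together with $L^\infty$-comparison gives $u\leq U$, so $u$ is compactly supported as well. Integrating the divergence form of \eqref{eq-for-each-population-u-i} then yields the conservation. Next, introduce the mass-preserving PME rescaling
\begin{equation*}
u_\lambda(x,t)=\lambda^{n}u(\lambda x,\lambda^{\beta}t),\qquad U_\lambda(x,t)=\lambda^{n}U(\lambda x,\lambda^{\beta}t),\qquad \beta:=n(m-1)+2,
\end{equation*}
which preserves both equations. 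By the classical asymptotic result for the porous medium equation (see \cite{Va1,LV}), $U_\lambda\to \mathcal{B}_M$ in $C^{\infty}_{\mathrm{loc}}(\R^n\times(0,\infty))$ as $\lambda\to 0$. \textbf{Assumption I} and \textbf{Assumption II} hold for the pair $(u_\lambda,U_\lambda)$ with constants uniform in $\lambda$ on any compact $K\Subset\R^n\times(0,\infty)$, so Theorem \ref{eq-local-continuity-of-solution} gives a $\lambda$-uniform modulus of continuity; combined with $0\leq u_\lambda\leq U_\lambda\leq C\mathcal{B}_M$, Arzel\`a--Ascoli produces a subsequential limit $u_\infty$ solving
\begin{equation*}
(u_\infty)_t=\nabla\cdot\bigl(m\,\mathcal{B}_M^{m-1}\nabla u_\infty\bigr),\qquad 0\leq u_\infty\leq\mathcal{B}_M,\qquad \|u_\infty(\cdot,t)\|_{L^1}=M_0,
\end{equation*}
the passage to the limit being justified by the smooth convergence of $U_\lambda$ and the uniform equicontinuity of $u_\lambda$.

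The main obstacle is showing $u_\infty=\tfrac{M_0}{M}\mathcal{B}_M$. The crucial observation is that $c\mathcal{B}_M$ solves the limit equation for \emph{every} $c\in\R$, because $(\mathcal{B}_M)_t=\Delta\mathcal{B}_M^{m}=\nabla\cdot(m\mathcal{B}_M^{m-1}\nabla\mathcal{B}_M)$; so the candidate with the correct mass is forced to be $\tfrac{M_0}{M}\mathcal{B}_M$. To rule out other limits, I would switch to self-similar variables $y=x/t^{1/\beta}$, $\tau=\log t$, $\theta(y,\tau)=t^{n/\beta}u(x,t)$, in which the limit equation becomes a linear Fokker--Planck problem with stationary profile $\mathcal{F}_M(y):=\mathcal{B}_M(y,1)$. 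Using the identity $m\mathcal{F}_M^{m-1}\nabla\mathcal{F}_M+\tfrac{y}{\beta}\mathcal{F}_M=0$ on $\{\mathcal{F}_M>0\}$, the limit equation simplifies to $\mathcal{F}_M\phi_\tau=\nabla\cdot(m\mathcal{F}_M^m\nabla\phi)$ for $\phi:=\theta/\mathcal{F}_M$, and a direct computation yields
\begin{equation*}
\frac{d}{d\tau}\int_{\R^n}\mathcal{F}_M\bigl(\phi-\tfrac{M_0}{M}\bigr)^{2}dy=-2\int_{\R^n}m\,\mathcal{F}_M^{m}|\nabla\phi|^{2}dy\leq 0,
\end{equation*}
the relative-entropy dissipation forcing $\phi\equiv M_0/M$ on the support of $\mathcal{F}_M$ in the limit, where the constant is determined by mass. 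Independence of the subsequence then upgrades the argument to the full family, and unscaling $\lambda=t^{-1/\beta}$ recovers \eqref{eq-convergence-in-L-1-from-u-to-barrenblatt-M-0-over-M-B-M} (dominated convergence against $\mathcal{B}_M+u\in L^1$) and \eqref{eq-convergence-in-L-infty-from-u-to-barrenblatt-M-0-over-M-B-M} with $a_1=n/\beta=n/(n(m-1)+2)$, the tails being controlled by the explicit decay of $\mathcal{B}_M$ together with $u\leq U$.
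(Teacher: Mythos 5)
Your overall architecture matches the paper's: mass conservation, mass-preserving rescaling of $(u,U)$, compactness to extract a subsequential limit $u_\infty$ solving $u_t=\nabla\cdot(m\mathcal{B}_M^{m-1}\nabla u)$ with $0\leq u_\infty\leq\mathcal{B}_M$ and $\|u_\infty\|_{L^1}=M_0$, identification of $u_\infty$, and unscaling. (Minor slip: with your convention $u_\lambda(x,t)=\lambda^n u(\lambda x,\lambda^\beta t)$ the large-time limit is $\lambda\to\infty$, not $\lambda\to 0$; only then does $\lambda^\beta t\to\infty$ and $U_\lambda\to\mathcal{B}_M$.)

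The substantive divergence is in the identification step, and there is a gap there. The paper proves a dedicated uniqueness theorem (Theorem \ref{thm-for-uniqueness-of-main-convergence}) by a maximum-principle argument: it considers $\overline{M}=\inf\{M':w\leq\frac{M'}{M}\mathcal{B}_M\}$ and uses the strong maximum principle, a time-shift comparison (Lemma \ref{lem-Lemma-3-5-of-cite-KV}), and rescaling of the comparison function to force $\overline{M}=M_0$; this gives $u_\infty=\frac{M_0}{M}\mathcal{B}_M$ \emph{pointwise at every time}, which is exactly what the compactness step needs. You instead propose a relative-entropy/Lyapunov argument in self-similar variables: with $\phi=\theta/\mathcal{F}_M$ you correctly derive $\mathcal{F}_M\phi_\tau=\nabla\cdot(m\mathcal{F}_M^m\nabla\phi)$ and
\begin{equation*}
\frac{d}{d\tau}\int_{\R^n}\mathcal{F}_M\Bigl(\phi-\tfrac{M_0}{M}\Bigr)^{2}\,dy=-2\int_{\R^n}m\,\mathcal{F}_M^{m}|\nabla\phi|^{2}\,dy\leq 0.
\end{equation*}
But monotonicity of $L(\tau)=\int\mathcal{F}_M(\phi-M_0/M)^2\,dy$ together with $0\leq\phi\leq 1$ and mass $M_0$ only yields $L(\tau)\searrow L_{+\infty}\geq 0$; even after extracting $L_{+\infty}=0$ from the integrable dissipation and compactness, you get $\phi(\cdot,\tau)\to M_0/M$ as $\tau\to+\infty$, i.e.\ the \emph{limit problem itself} converges to $\frac{M_0}{M}\mathcal{B}_M$ at large time. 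That does not identify $u_\infty(\cdot,1)$, which is what the subsequential-compactness step produces, and without knowing $u_\infty\equiv\frac{M_0}{M}\mathcal{B}_M$ at a fixed time you cannot conclude subsequence-independence of the limit. Closing this requires either (i) showing the limit is scale-invariant in $\lambda$, which is circular without the uniqueness, or (ii) running the Lyapunov inequality directly on $u$ (not on $u_\infty$); in case (ii) the identity $m\mathcal{F}_M^{m-1}\nabla\mathcal{F}_M+\tfrac{y}{\beta}\mathcal{F}_M=0$ no longer cancels the drift exactly — you pick up an error term proportional to $m(\mathcal{U}^{m-1}-\mathcal{F}_M^{m-1})\nabla\mathcal{F}_M$ that must be shown to vanish fast enough as $\tau\to\infty$, using the explicit convergence $\mathcal{U}\to\mathcal{F}_M$ and the lower bound $\mathcal{U}\geq\frac12\mathcal{F}_M$. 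Either of these patches would close the argument, but as written the identification step does not follow, and you should also state and use the initial trace $u_\infty(\cdot,t)\to M_0\delta$ as $t\to0^+$, which the paper proves (by an argument \`a la Lemma 18.4/18.6 of \cite{Va1}) and which is needed to tie the constant to $M_0$ in any uniqueness argument.
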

By Theorem \ref{eq-convergence-in-L-1-and-L-infty-of-u-to-some-constant-Barenblatt}, we can get $L^{1}$ and $L^{\infty}$ convergence of solution $\bold{u}=\left(u^1,\cdots,u^k\right)$ of \eqref{eq-pme_system}.
\begin{cor}\label{cor-convergence-in-L-1-and-L-infty-of-u-to-some-constant-Barenblatt}
Let $m>1$. For each $1\leq i\leq k$, let $u_0^i(x)$ be nonnegative, integrable and compactly supported with 
\begin{equation*}
\left\|u_0^i\right\|_{L^1\left(\R^n\right)}=M_i>0.
\end{equation*} 
Suppose that $\bold{u}=\left(u^1,\cdots,u^k\right)$ is a solution of \eqref{eq-pme_system} with initial data $u^i_0$. Then
\begin{equation}\label{eq-convergence-in-L-1-from-u-to-barrenblatt-M-0-over-M-B-M}
\lim_{t\to\infty}\left\|u^i\left(\cdot,t\right)-\frac{M_i}{M}\mathcal{B}_{M}\left(\cdot,t\right)\right\|_{L^1}=0
\end{equation}
and
\begin{equation}\label{eq-convergence-in-L-infty-from-u-to-barrenblatt-M-0-over-M-B-M}
\lim_{t\to\infty}t^{a_1}\left|u(x,t)-\frac{M_i}{M}\mathcal{B}_M(x,t)\right|=0 \qquad \mbox{uniformly in $\R^n$}
\end{equation}
for some constant $a_1$ where  $M=M_1+\cdots+M_k$.
\end{cor}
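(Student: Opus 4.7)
The plan is to reduce this corollary to a direct application of Theorem \ref{eq-convergence-in-L-1-and-L-infty-of-u-to-some-constant-Barenblatt}, invoked once for each component $u^i$ of the system.

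First, I would verify that the aggregated function $U := u^1+\cdots+u^k$ satisfies the hypotheses required of the total density. Setting $U_0 := \sum_{j=1}^{k}u^j_0$, this $U_0$ is nonnegative, compactly supported, and integrable as a finite sum of such functions, with $\|U_0\|_{L^1(\R^n)}=M_1+\cdots+M_k=M$. Summing the equations in \eqref{eq-pme_system} over $i=1,\ldots,k$ shows, exactly as in the introduction, that $U$ solves \eqref{eq-PME-satisfied-by-U-total-species} with this initial datum and $L^1$ mass $M$.

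Next, I would fix $1\leq i\leq k$ and interpret $u^i$ as a scalar solution of \eqref{eq-for-each-population-u-i} driven by the diffusion coefficient $U^{m-1}$ just constructed. The initial compatibility \eqref{eq-basic-property-of-u-smaller-than-U-at-zero} holds trivially, since
\begin{equation*}
0\leq u^i_0(x)\leq u^1_0(x)+\cdots+u^k_0(x)=U_0(x)\qquad\forall x\in\R^n,
\end{equation*}
because $u^j_0\geq 0$ for every $j\neq i$. All hypotheses of Theorem \ref{eq-convergence-in-L-1-and-L-infty-of-u-to-some-constant-Barenblatt} are therefore in force for the pair $(u^i,U)$, with scalar mass $\|u^i_0\|_{L^1(\R^n)}=M_i$ and total mass $\|U_0\|_{L^1(\R^n)}=M$. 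Applying that theorem produces both the $L^1$ convergence and the uniform $t^{a_1}$-weighted $L^{\infty}$ convergence toward $(M_i/M)\mathcal{B}_M$; iterating over $1\leq i\leq k$ completes the proof.

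Since the corollary is essentially a pullback of the scalar theorem to the system, no genuine obstacle arises. The only point worth isolating is the structural observation that the coupling in \eqref{eq-pme_system} enters each individual equation solely through the common coefficient $U^{m-1}$, so every component $u^i$ inherits from the single-species theorem exactly the asymptotic profile with mass fraction $M_i/M$ dictated by $\|u^i_0\|_{L^1}/\|U_0\|_{L^1}$.
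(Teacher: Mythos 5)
Your proof is correct and is precisely the argument the paper has in mind: the authors give no explicit proof of the corollary, merely prefacing it with ``By Theorem \ref{eq-convergence-in-L-1-and-L-infty-of-u-to-some-constant-Barenblatt}, we can get $L^1$ and $L^\infty$ convergence\ldots'', which is exactly the componentwise reduction you carry out. Your verification that $U_0=\sum_j u^j_0$ satisfies the hypotheses on the total density and that $0\le u^i_0\le U_0$ supplies \eqref{eq-basic-property-of-u-smaller-than-U-at-zero}, so the theorem applies directly to each pair $(u^i,U)$ with masses $M_i$ and $M$.
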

\indent Denote by $v$ the pressure of $u$, i.e., 
\begin{equation*}
v(x,t)=m\,u^{m-1}(x,t) \qquad \forall (x,t)\in\R^n\times\left[0,\infty\right).
\end{equation*}
For any $\lambda>0$, let $v_{\lambda}$ be the rescaled function of $v$ by
\begin{equation*}
v_{\lambda}(x,t)=\lambda^{\frac{(m-1)n}{(m-1)n+2}}v\left(\lambda^{\frac{1}{(m-1)n+2}}x,\lambda t\right), \qquad \forall \lambda>0,\,\,(x,t)\in\R^n\times(0,\infty).
\end{equation*}
By Theorem \ref{eq-convergence-in-L-1-and-L-infty-of-u-to-some-constant-Barenblatt}, there is the uniform convergence such that
\begin{equation*}
v_{\lambda}(x,t)\to \left(\frac{M_0}{M}\mathcal{B}_M(x,t)\right)^{m-1} \qquad  \mbox{in $L^p,\,\left(p\geq 1\right)$}\quad \mbox{as $\lambda\to\infty$}.
\end{equation*}
By $C^{\infty}$ regularity in \cite{Ko} and an argument similar to the proof of Theorem 3.2 of \cite{LV}, we can extend our convergence in $L^p$, $\left(p\geq 1\right)$, to the one in $C_s^{\infty}$ where $ds$ is a Riemannian metric which will be mentioned later.  For the $C^{\infty}_s$ convergence, following conditions are also needed to be imposed on the initial data $u_0$ and $U_0$, see  \cite{CVW} for the detail.\\
\\
\textbf{Conditions for $C_s^{\infty}$-convergence}
\begin{itemize}
\item Support : $\textbf{supp}\,u_0\,=\,\textbf{supp}\,U_0$.
\item Regularity : $u^{m-1}_0$, $U^{m-1}_0\in C^1\left(\overline{\Omega_0}\right)$.
\item Non-degeneracy : there exists a constant $K>0$ such that 
\begin{equation}\label{eq-non-degeneracy-of-pressure-of-sol-u}
0<\frac{1}{K}<u^{m-1}_0+\left|\nabla u^{m-1}_0\right|<K \qquad \mbox{and} \qquad 0<\frac{1}{K}<U^{m-1}_0+\left|\nabla U^{m-1}_0\right|<K\qquad \mbox{in $\overline{\Omega_0}$}.
\end{equation}
\end{itemize}
where $\Omega_0$ is the set of all points in $\R^n$ where $U_0>0$, i.e.,
\begin{equation*}
\Omega_0=\left\{x\in\R^n:U_0(x)>0\right\}.
\end{equation*} 
Under the \textbf{Conditions for $C_s^{\infty}$-convergence}, the $C_s^{\infty}$ convergence of pressure $v$ is stated as follow.
\begin{thm}[cf. Theorem 3.2 of \cite{LV}]\label{thm-Theorem-3-2-of-cite-LV}
Under the assumption of Theorem \ref{eq-convergence-in-L-1-and-L-infty-of-u-to-some-constant-Barenblatt} and \textbf{Conditions for $C^{\infty}_s$-convergence}, the rescaled function $v_{\lambda}$ satisfies
\begin{equation*}
v_{\lambda}(x,1)\to  \left(\frac{M_0}{M}\mathcal{B}_M(x,1)\right)^{m-1}\qquad \mbox{in $C_s^{k}$} \quad \mbox{ as $\lambda\to\infty$}
\end{equation*}
for any $k\in\N$.
\end{thm}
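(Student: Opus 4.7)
The plan is to adapt the argument of Theorem 3.2 of \cite{LV}, upgrading the $L^p$ convergence coming from Theorem \ref{eq-convergence-in-L-1-and-L-infty-of-u-to-some-constant-Barenblatt} to $C^k_s$ convergence by exploiting the $C^\infty$ regularity of Koch \cite{Ko} up to the free boundary in the intrinsic metric $ds$.

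First I would record the preliminary convergence. From Theorem \ref{eq-convergence-in-L-1-and-L-infty-of-u-to-some-constant-Barenblatt} and an interpolation between the $L^1$ bound \eqref{eq-convergence-in-L-1-from-u-to-barrenblatt-M-0-over-M-B-M} and the $L^\infty$ bound \eqref{eq-convergence-in-L-infty-from-u-to-barrenblatt-M-0-over-M-B-M}, one obtains convergence of $u(\cdot,t)$ to $\tfrac{M_0}{M}\mathcal{B}_M(\cdot,t)$ in every $L^p(\R^n)$, $1\le p\le\infty$. Raising to the $(m-1)$-power and applying the scaling that defines $v_\lambda$, one gets
\begin{equation*}
v_\lambda(\cdot,1)\;\longrightarrow\;\left(\tfrac{M_0}{M}\mathcal{B}_M(\cdot,1)\right)^{m-1}\quad \text{in } L^p(\R^n),\qquad p\in[1,\infty],
\end{equation*}
as $\lambda\to\infty$. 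This identifies the candidate limit uniquely.

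Next I would establish uniform $C^k_s$ bounds for the family $\{v_\lambda(\cdot,1)\}_{\lambda\ge \lambda_0}$. The rescaling is chosen so that $v_\lambda$ solves the same pressure equation that $v$ does, and the \textbf{Conditions for $C^\infty_s$-convergence} (matched supports of $u_0$ and $U_0$, $C^1$-regularity of the pressures, and the two-sided non-degeneracy \eqref{eq-non-degeneracy-of-pressure-of-sol-u}) ensure that at $t=1$ the pressure $v$ and the diffusion pressure $mU^{m-1}$ satisfy the hypotheses of Koch's theory \cite{Ko}: they are $C^1$ up to the free boundary with non-vanishing normal derivative. Applying Koch's regularity theorem to $v$ on the parabolic cylinder of size comparable to $\lambda$ (in the original variables), and then undoing the scaling, one gets a bound $\|v_\lambda(\cdot,1)\|_{C^k_s(K)}\le C_k$ uniform in $\lambda$, on any compact set $K$ in the rescaled support. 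The fact that the support of $v_\lambda(\cdot,1)$ converges (in Hausdorff distance) to that of $\mathcal{B}_M(\cdot,1)$ follows from the $L^1$ convergence together with the uniform H\"older regularity provided by Theorem \ref{eq-local-continuity-of-solution}.

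Finally I would conclude by compactness. Given any sequence $\lambda_n\to\infty$, the uniform $C^k_s$ bound together with the Arzel\`a--Ascoli theorem in $C^{k-1}_s$ extracts a subsequence along which $v_{\lambda_n}(\cdot,1)$ converges in $C^{k-1}_s$; the $L^p$ limit above forces this limit to be $\left(\tfrac{M_0}{M}\mathcal{B}_M(\cdot,1)\right)^{m-1}$. Since the whole sequence has a unique limit and $k$ is arbitrary, $C^\infty_s$ convergence follows. The main obstacle is the uniform-in-$\lambda$ application of Koch's regularity: one must verify that the non-degeneracy \eqref{eq-non-degeneracy-of-pressure-of-sol-u} propagates forward in time for both $u$ and $U$ (so that Koch's hypotheses continue to hold along the evolution), and that the constants in Koch's estimates are scale invariant under the chosen self-similar rescaling. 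This is the point at which the coupling between $u$ and its driving coefficient $U$ really enters, since our equation is not the pure PME but carries $U^{m-1}$ as diffusion.
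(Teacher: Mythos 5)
Your outline matches the paper's proof almost exactly: upgrade the $L^p$ convergence of Theorem \ref{eq-convergence-in-L-1-and-L-infty-of-u-to-some-constant-Barenblatt} to $C^k_s$ via the uniform estimate of Theorem \ref{thm-theorem-3-1-of-cite-LV} together with Arzel\'a--Ascoli and uniqueness of the limit. The ``main obstacle'' you flag (persistence of the non-degeneracy of the coupling coefficient in time and across scales) is precisely what the paper disposes of before invoking Koch: Lemma \ref{lem-diffusion-coefficient-U-trapped-in-between-u-and-1-over-epsilon-u} propagates the two-sided bound $\epsilon_1 U\le u\le U$ for all $t>0$ by the maximum principle, so $A=(U/u)^{m-1}$ stays uniformly elliptic and bounded; Theorem \ref{cor-local-holder-estimates-of-u-i-s-in-solution-bold-u} gives $A\in C^\alpha_s$, placing the hodograph equation \eqref{eq-equ-for-h-transformed-into-fixed-boundary-z-equal-0} in Koch's class; and scale invariance is automatic because all quantities are built from the self-similar rescaling \eqref{eq-scaling-of-u-and-U-by-lambda-0}. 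With those observations supplied, your argument coincides with the paper's.
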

As a consequence of Theorem \ref{thm-Theorem-3-2-of-cite-LV}, we can also get the following geometric properties of pressure $v$.
\begin{cor}[cf. Theorem 3.3 of \cite{LV}]
There exists a constant $t_0>0$ such that the pressure $v(x,t)$ is strictly concave on $\left\{x\in\R^n:v(x,t)>0\right\}$ for all $t>t_0$. More precisely
\begin{equation*}
\lim_{t\to\infty}\,t\,\frac{\partial^2\,v}{\partial\,x_i^2}=-\frac{1}{(m-1)n+2} \qquad \mbox{uniformly in $x\in \textbf{supp}\,v$}\qquad  \left(\forall i=1,\,\cdots,\,n\right).
\end{equation*}
\end{cor}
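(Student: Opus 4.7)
The plan is to read off the corollary from Theorem~\ref{thm-Theorem-3-2-of-cite-LV} by transferring the $C^\infty_s$ convergence of the rescaled pressure back to physical variables. First, let me identify the limiting profile explicitly: writing $\sigma=1/((m-1)n+2)$, the Barenblatt pressure has the self-similar form
\begin{equation*}
V_\infty(y,t) \;=\; m\Bigl(\tfrac{M_0}{M}\mathcal{B}_M(y,t)\Bigr)^{m-1} \;=\; t^{-(m-1)n\sigma}\bigl(A-B|y t^{-\sigma}|^{2}\bigr)_+,
\end{equation*}
for explicit constants $A=A(m,n,M_0/M)$ and $B=B(m,n,M_0/M)$. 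A direct differentiation on the positivity set gives $\partial^{2}_{y_i}V_\infty(y,t) = -2B\,t^{-(m-1)n\sigma-2\sigma}$; the exponents combine as $(m-1)n\sigma+2\sigma=((m-1)n+2)\sigma=1$, so $t\,\partial^{2}_{y_i}V_\infty = -2B$ identically on the support. A separate check (using that $V_\infty$ satisfies the pressure equation, or directly matching with the standard Barenblatt formula) identifies $2B=1/((m-1)n+2)$, which is precisely the asserted constant.

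Next I would differentiate the scaling relation $v_\lambda(x,1)=\lambda^{(m-1)n\sigma}v(\lambda^{\sigma}x,\lambda)$ twice in $x_i$ to get
\begin{equation*}
\partial^{2}_{x_i}v_\lambda(x,1) \;=\; \lambda^{(m-1)n\sigma+2\sigma}\,\partial^{2}_{y_i}v(\lambda^{\sigma}x,\lambda) \;=\; \lambda\,\partial^{2}_{y_i}v(\lambda^{\sigma}x,\lambda).
\end{equation*}
By Theorem~\ref{thm-Theorem-3-2-of-cite-LV} with $k=2$, the left-hand side converges to $\partial^{2}_{x_i}V_\infty(x,1)=-1/((m-1)n+2)$ uniformly in $x\in\overline{\textbf{supp}\,V_\infty(\cdot,1)}$ as $\lambda\to\infty$. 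Setting $\lambda=t$ and $y=t^{\sigma}x$, the range of $y$ sweeps out $\textbf{supp}\,v(\cdot,t)$, so
\begin{equation*}
t\,\frac{\partial^{2}v}{\partial y_i^{2}}(y,t)\;\longrightarrow\;-\frac{1}{(m-1)n+2}\qquad\text{uniformly in }\;y\in\textbf{supp}\,v(\cdot,t),
\end{equation*}
which is the claimed limit. Strict concavity on $\{v(\cdot,t)>0\}$ for all $t>t_0$ follows because the Hessian of $v(\cdot,t)$ is then a uniformly negative-definite matrix; choose $t_0$ so large that each $\partial^{2}_{y_i}v(\cdot,t)$ is bounded above by, say, $-1/(2t((m-1)n+2))$, and mixed derivatives can be controlled similarly via the same $C^2_s$ convergence applied to $\partial^{2}_{x_ix_j}v_\lambda(x,1)=\lambda\,\partial^{2}_{y_iy_j}v(t^{\sigma}x,t)$, whose limit $\partial^{2}_{x_ix_j}V_\infty=0$ for $i\neq j$.

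The main obstacle is interpretive rather than computational: the convergence in Theorem~\ref{thm-Theorem-3-2-of-cite-LV} is phrased in the intrinsic $C^\infty_s$ topology adapted to the moving free boundary (the $ds$-metric of Caffarelli--Vazquez--Wolanski), and one must argue that this intrinsic $C^2$ convergence implies uniform Euclidean $C^2$ convergence of $v_\lambda(\cdot,1)$ up to and including the free boundary. The Barenblatt profile is non-degenerate at its free boundary in the sense of \eqref{eq-non-degeneracy-of-pressure-of-sol-u}, and so is $v_\lambda(\cdot,1)$ for large $\lambda$ by the $C^1_s$ part of the convergence; hence the intrinsic coordinates on both sides are uniformly comparable to Euclidean ones near the interface, and the transfer of the Hessian estimate is legitimate. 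Once this is granted, the corollary drops out of the computation above.
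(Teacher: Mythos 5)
The paper does not actually give a proof of this corollary; it is stated immediately after Theorem~\ref{thm-Theorem-3-2-of-cite-LV} as a direct consequence with the reference ``cf.\ Theorem 3.3 of \cite{LV}.'' Your overall strategy --- differentiate the scaling relation $v_{\lambda}(x,1)=\lambda^{(m-1)n\sigma}v(\lambda^{\sigma}x,\lambda)$ twice, note that the total exponent $(m-1)n\sigma+2\sigma=1$ makes $\partial^{2}_{x_i}v_{\lambda}(x,1)=\lambda\,\partial^{2}_{y_i}v(\lambda^{\sigma}x,\lambda)$, and then pass the $C^{\infty}_{s}$ limit --- is the correct and essentially the only reasonable route, and it is the same mechanism used in \cite{LV}.

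There is, however, a real gap in the ``separate check'' step, which you assert rather than perform. With the paper's conventions ($v=mu^{m-1}$, $u_{\infty}=\tfrac{M_0}{M}\mathcal{B}_M$, $k=\tfrac{a_1(m-1)}{2mn}$), the limit profile is
\begin{equation*}
V_{\infty}(y,t)=m\Bigl(\tfrac{M_0}{M}\Bigr)^{m-1}t^{-(m-1)a_1}\Bigl(\mathcal{C}_M-\tfrac{k|y|^2}{t^{2a_2}}\Bigr)_+,
\end{equation*}
and differentiating twice on the support gives
\begin{equation*}
t\,\partial^{2}_{y_i}V_{\infty}=-2mk\Bigl(\tfrac{M_0}{M}\Bigr)^{m-1}=-\frac{(m-1)}{(m-1)n+2}\Bigl(\tfrac{M_0}{M}\Bigr)^{m-1},
\end{equation*}
since $2mk=(m-1)a_2$. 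This is not $-\tfrac{1}{(m-1)n+2}$ unless one uses the classical pressure $\tfrac{m}{m-1}u^{m-1}$ (which removes the extra factor $m-1$) \emph{and} $M_0=M$ (which removes the mass ratio). In other words, the constant as printed in the corollary is that of \cite{LV} for the full Barenblatt pressure and has not been adapted to the present setting; your claim that ``the separate check identifies $2B=1/((m-1)n+2)$'' is simply false for the function $V_{\infty}$ you wrote down two sentences earlier. You should either carry out the computation and flag that the stated constant should carry the factor $(m-1)(M_0/M)^{m-1}$, or point out that the corollary is implicitly stated for the normalized pressure $\tfrac{m}{m-1}u^{m-1}$ of the total density $U$. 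A second, more minor point: the passage from $C^{\infty}_s$ convergence to uniform Euclidean $C^{2}$ convergence up to the free boundary is correct, but the reason is not merely ``comparability of intrinsic and Euclidean coordinates.'' The $C^{2+\alpha}_s$ norm alone controls $z f_{zz}$ rather than $f_{zz}$; it is the higher-order spaces $C^{k,2+\alpha}_s$ (in particular $k\ge 1$, where $f_z\in C^{2+\alpha}_s$ and hence $f_{zz}\in C^{\alpha}_s$) provided by the $C^{\infty}_s$ convergence, together with the non-degeneracy, that yield uniform control of the Euclidean Hessian up to $\{z=0\}$. Worth saying explicitly.
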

As a consequence of the Theorem \ref{eq-convergence-in-L-1-and-L-infty-of-u-to-some-constant-Barenblatt} and Theorem \ref{thm-Theorem-3-2-of-cite-LV}, we can describe the large time asymptotic behaviours of solutions of \eqref{eq-pme_system} as $t\to\infty$. 
\begin{cor}
For each $1\leq i\leq k$, let $u_0^i(x)$ be nonnegative, integrable and compactly supported with 
\begin{equation*}
\left\|u_0^i\right\|_{L^1\left(\R^n\right)}=M_i>0.
\end{equation*} 
Suppose that $\bold{u}=\left(u^1,\cdots,u^k\right)$ be a solution of \eqref{eq-pme_system}. 
Then, under the Non-degeneracy of $u_0^i$, $(i=1,\cdots,k)$, the pressure 
\begin{equation*}
v^i(x,t)=m\left(u^i(x,t)\right)^{m-1} \qquad \forall (x,t)\in\R^n\times[0,\infty),\,\,1\leq i\leq k.
\end{equation*} 
 convergence to $\left(\frac{M_i}{M}\mathcal{B}_{M}\right)^{m-1}$ uniformly in $L^p$, $\left(p\geq 1\right)$ and $C_s^{\infty}$ as $t\to\infty$ where $M=M_1+\cdots+M_k$.\\
\indent As a consequence of $C_s^{\infty}$ convergence, the pressure of $v^i$ becomes strictly concave on $\left\{x\in\R^n: v>0\right\}$ after a finite time.
\end{cor}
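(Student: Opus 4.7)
The plan is to reduce the corollary directly to Theorem \ref{eq-convergence-in-L-1-and-L-infty-of-u-to-some-constant-Barenblatt} and Theorem \ref{thm-Theorem-3-2-of-cite-LV} applied componentwise. Fix $1\leq i\leq k$. Since $\bold{u}=(u^1,\dots,u^k)$ solves \eqref{eq-pme_system} and $U=\sum_j u^j$ satisfies the standard PME \eqref{eq-PME-satisfied-by-U-total-species} with initial mass $M=M_1+\cdots+M_k$, each component $u^i$ is a solution of \eqref{eq-for-each-population-u-i} with diffusion coefficient $U$ and obeys \eqref{eq-basic-property-of-u-smaller-than-U-at-zero} (with $U_0=\sum_j u_0^j$) because of \eqref{eq-natural-condition-between-density-of-one-species-and-total-density}. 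Hence Theorem \ref{eq-convergence-in-L-1-and-L-infty-of-u-to-some-constant-Barenblatt} applies with $M_0=M_i$, yielding
\begin{equation*}
\lim_{t\to\infty}\left\|u^i(\cdot,t)-\frac{M_i}{M}\mathcal{B}_M(\cdot,t)\right\|_{L^1}=0,\qquad
\lim_{t\to\infty}t^{a_1}\left|u^i(x,t)-\frac{M_i}{M}\mathcal{B}_M(x,t)\right|=0\ \text{uniformly in }\R^n.
\end{equation*}
Interpolating between these two bounds immediately gives convergence of $u^i$ (and hence of $v^i=m(u^i)^{m-1}$) to $\tfrac{M_i}{M}\mathcal{B}_M$ (resp.\ $(\tfrac{M_i}{M}\mathcal{B}_M)^{m-1}$) in every $L^p$, $p\geq 1$.

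Next I would upgrade this to $C_s^{\infty}$ convergence via Theorem \ref{thm-Theorem-3-2-of-cite-LV}. The hypothesis of that theorem demands the three Conditions for $C_s^{\infty}$-convergence for the pair $(u_0^i,U_0)$. The non-degeneracy of $u_0^i$ is assumed in the statement; the non-degeneracy and $C^1$-regularity of $U_0^{m-1}$ on $\overline{\Omega_0}$ follow by summing the corresponding estimates for the $u_0^j$'s (the non-degeneracy from below propagates to the sum because all summands are non-negative and at least one is non-degenerate, while the $C^1$-bound from above is a triangle inequality). The support identity $\supp u_0^i=\supp U_0$ is a consequence of the same non-degeneracy: the pressure $(u_0^i)^{m-1}$ is bounded below by $1/K$ on $\overline{\Omega_0}$, forcing $\Omega_0\subset\supp u_0^i$, and the reverse inclusion is immediate from $u_0^i\le U_0$. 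With these conditions in hand, Theorem \ref{thm-Theorem-3-2-of-cite-LV} gives the desired $C_s^{\infty}$ convergence of the rescaled pressures $v^i_{\lambda}$, which translates, after undoing the parabolic rescaling, into $C_s^{\infty}$-convergence of $v^i(\cdot,t)$ to $\bigl(\tfrac{M_i}{M}\mathcal{B}_M(\cdot,t)\bigr)^{m-1}$ as $t\to\infty$.

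Finally, for eventual strict concavity I would invoke the preceding corollary (the analogue of Theorem 3.3 of \cite{LV}) applied to $v^i$: because $C_s^{2}$ convergence in particular forces $t\,\partial_{x_j}^2 v^i \to -1/((m-1)n+2)$ uniformly on $\supp v^i$, the Hessian of $v^i$ becomes strictly negative definite for all sufficiently large $t$, so $v^i(\cdot,t)$ is strictly concave on $\{v^i>0\}$ after a finite time $t_0^{(i)}$; taking $t_0=\max_i t_0^{(i)}$ yields the uniform statement.

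The only real obstacle is the verification of the Conditions for $C_s^{\infty}$-convergence for the pair $(u_0^i,U_0)$ rather than for a single function. Once the non-degeneracy hypothesis on each $u_0^i$ is correctly interpreted (so that it forces a common support $\Omega_0$ and $C^1$-pressure-regularity), the remainder of the argument is a transparent componentwise application of the previously established theorems.
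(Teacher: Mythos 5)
Your overall strategy is the right one and matches the structure the paper invites: identify each $u^i$ as a solution of \eqref{eq-for-each-population-u-i} with diffusion coefficient $U=\sum_j u^j$, apply Theorem~\ref{eq-convergence-in-L-1-and-L-infty-of-u-to-some-constant-Barenblatt} and Corollary~\ref{cor-convergence-in-L-1-and-L-infty-of-u-to-some-constant-Barenblatt} to get the $L^1$ and scaled $L^\infty$ convergence, interpolate for $L^p$, and then invoke Theorem~\ref{thm-Theorem-3-2-of-cite-LV} and its corollary for the $C_s^\infty$ and eventual-concavity claims. The paper offers no written proof of this corollary, so a componentwise reduction is exactly what is expected.

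The one real soft spot is your verification that the pair $(u_0^i,U_0)$ satisfies the \textbf{Conditions for $C_s^\infty$-convergence}. You claim the non-degeneracy and $C^1$-regularity of $U_0^{m-1}$ ``follow by summing'' and by ``a triangle inequality''. This cannot be right as stated, because the map $s\mapsto s^{m-1}$ is nonlinear: $U_0^{m-1}=\bigl(\sum_j u_0^j\bigr)^{m-1}$ is not $\sum_j (u_0^j)^{m-1}$, and
\begin{equation*}
\nabla U_0^{m-1}=\sum_j\left(\frac{u_0^j}{U_0}\right)^{2-m}\nabla\bigl(u_0^j\bigr)^{m-1},
\end{equation*}
so the gradients do not combine by a triangle inequality: the weights $(u_0^j/U_0)^{2-m}$ appear and must be controlled. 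For $m>2$ they are only bounded if the ratios $u_0^j/U_0$ are bounded away from $0$ uniformly, which is exactly the content of Lemma~\ref{lem-diffusion-coefficient-U-trapped-in-between-u-and-1-over-epsilon-u} and requires the same equal-support and non-degeneracy inputs you are trying to derive; the argument is more circular than you acknowledge. Similarly, your support argument misreads the non-degeneracy: what is bounded below by $1/K$ is $(u_0^i)^{m-1}+|\nabla(u_0^i)^{m-1}|$, not $(u_0^i)^{m-1}$ alone, so you cannot simply conclude $\Omega_0\subset\supp u_0^i$ by pointwise positivity of the pressure. (The equal-support conclusion is still true — if $u_0^i$ vanished on a nonempty open subset of $\Omega_0$ then both $(u_0^i)^{m-1}$ and its gradient would vanish there, violating non-degeneracy — but that is a different argument from the one you wrote.) If, as the paper's phrasing suggests, ``Non-degeneracy of $u_0^i$'' is shorthand for imposing the full \textbf{Conditions for $C_s^{\infty}$-convergence} on each pair $(u_0^i,U_0)$, then none of this verification is needed and the corollary is a transparent componentwise application; but if you take the hypothesis literally, the passage from the $u_0^j$'s to $U_0$ needs a more careful argument than the one you gave.
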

Many studies on the degenerate system  can be found. We refer the readers to the paper \cite{KMV} for the system of degenerate parabolic equations idealizing reactive solute transport in porous media. In \cite{KMV}, they show the existence of a unique weak solution to the coupled system and derive regularity estimates.\\
\indent We end up this section by introducing the definition of solutions. We say that $u$ is a weak solution of \eqref{eq-for-each-population-u-i} in $\R^n\times(0,T)$ if $u$ is a locally integrable function satisfying
\begin{enumerate}
\item $u$ belongs to function space:
\begin{equation*}\label{eq-first-condition-of-weak-soluiton-u-with-U}
U^{m-1}\left|\nabla u\right|\in L^2\left(0,T:L^2\left(\R^n\right)\right).  
\end{equation*}
\item $u$ satisfies the identity: 
\begin{equation}\label{eq-identity--of-formula-for-weak-solution}
\int_{0}^{T}\int_{\R^n}\left\{m\,U^{m-1}\nabla u\cdot\nabla\vp-u\vp_t\right\}\,dxdt=\int_{\R^n}u_0(x)\vp(x,0)\,dx
\end{equation}
holds for any test function $\vp\in C^{1}\left(\R^n\times(0,T)\right)$ which has a compact support in $\R^n$ and vanishes for $t=T$.
\end{enumerate}

\indent This paper is divided into three parts: In Part 1 (Section 2) we study the properties of the solution of \eqref{eq-for-each-population-u-i}. Part 2 (Section 3) is devoted to the proof of local continuity of solution of \eqref{eq-for-each-population-u-i} and local H\"older continuity of solution of \eqref{eq-pme_system}, (Theorem \ref{eq-local-continuity-of-solution} and Theorem \ref{cor-local-holder-estimates-of-u-i-s-in-solution-bold-u}). As mentioned above, the main step is to show the Oscillation Lemma. In Part 3 (Section 4), we will investigate the $C^{\infty}_s$ convergence between the solution and Barenblatt solution under some regularity conditions and degeneracy of equation.

\section{Preliminary Results}
\setcounter{equation}{0}
\setcounter{thm}{0}

In this section, we will study the existence and properties of solutions $u$ and $U$ of \eqref{eq-for-each-population-u-i} and \eqref{eq-PME-satisfied-by-U-total-species}, respectively. 

\subsection{Properties of solution $U$ of the porous medium equation}

As the first step of this section, we are going to deal with the existence and properties of function $U$ which appears in the diffusion coefficients of the system \eqref{eq-pme_system}. The first one is the {\it existence of weak solution} and the next one is the {\it mass conservation} of \eqref{eq-PME-satisfied-by-U-total-species}.

\begin{lemma}[cf. Chapter 9 of \cite{Va1}]\label{lem-cf-chapter-9-of-Va1}
Let $m>1$. For every $U_0\in L^1\left(\R^n\right)\cap L^{m+1}\left(\R^n\right)$ there exists an unique weak solution $U$ of \eqref{eq-PME-satisfied-by-U-total-species} with initial data $U_0$ such that $U^m\in L^2\left(0,\infty:H^1\left(\R^n\right)\right)$. The solution $U$ satisfies estimates
\begin{equation}\label{eq-L-infty-bound-of-diffusion-coefficients-U-by-L-infty-of-U-0-with-positive-t}
\left|U(x,t)\right|\leq C\left\|U_0\right\|_{1}^{2a_2}t^{-a_1}
\end{equation}
where $a_1=\frac{n}{n\left(m-1\right)+2}$, $a_2=\frac{1}{n(m-1)+2}$ and $C>0$ depends only on $m$ and $n$. If $U_0\in L^p\left(\R^n\right)$ for $1\leq p\leq\infty$, then $U\left(\cdot,t\right)\in L^p\left(\R^n\right)$ and 
\begin{equation*}
\left\|U\left(\cdot,t\right)\right\|_{L^p}\leq \left\|U_0\right\|_{L^p}.
\end{equation*}
\end{lemma}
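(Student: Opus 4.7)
The plan is to follow the classical construction presented in V\'azquez's monograph. The statement bundles four distinct assertions---existence, the regularity $U^m\in L^2(0,\infty;H^1(\R^n))$, the $L^\infty$ smoothing estimate, and $L^p$-contraction---all of which I would produce through a single truncation-regularization scheme followed by a Moser iteration.

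For existence, I would regularize $U_0$ by convolution, truncate below by $\varepsilon>0$ and above by $1/\varepsilon$, and solve the non-degenerate Dirichlet problem $(U_{\varepsilon,R})_t=\Delta U^m_{\varepsilon,R}$ on $B_R\times(0,T)$ with boundary value $\varepsilon$. Classical quasilinear parabolic theory produces a smooth positive solution $U_{\varepsilon,R}$, and the comparison principle renders the family monotone (decreasing in $\varepsilon$, increasing in $R$). Multiplying the equation by $U^m_{\varepsilon,R}$ and integrating in space and time gives the energy identity
\begin{equation*}
\sup_{0\le t\le T}\int U_{\varepsilon,R}^{m+1}\,dx + \int_0^T\!\!\int|\nabla U^m_{\varepsilon,R}|^2\,dx\,dt \;\le\; C\,\|U_0\|_{L^{m+1}}^{m+1},
\end{equation*}
uniformly in $\varepsilon$ and $R$. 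A diagonal limit together with standard parabolic compactness yields a weak solution $U$ with $U^m\in L^2(0,T;H^1(\R^n))$ satisfying the required integral identity. Uniqueness is obtained by a duality argument: with $W=U_1-U_2$ and $a=(U_1^m-U_2^m)/(U_1-U_2)\ge 0$, one has $W_t=\Delta(aW)$, and pairing $W$ against a regularized solution of the backward dual problem shows $W\equiv 0$.

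The $L^p$-contraction follows from multiplying the approximating equation by $p\,U^{p-1}_{\varepsilon,R}$ and integrating by parts to obtain
\begin{equation*}
\frac{d}{dt}\int U_{\varepsilon,R}^p\,dx \;=\; -\,m\,p(p-1)\int U_{\varepsilon,R}^{m+p-3}|\nabla U_{\varepsilon,R}|^2\,dx \;\le\; 0,
\end{equation*}
which after passage to the limit yields $\|U(\cdot,t)\|_{L^p}\le\|U_0\|_{L^p}$ for every $1\le p<\infty$; the case $p=\infty$ follows from the maximum principle applied on the approximating level.

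The main obstacle is the $L^1$-$L^\infty$ smoothing estimate \eqref{eq-L-infty-bound-of-diffusion-coefficients-U-by-L-infty-of-U-0-with-positive-t}, which I would establish by Moser iteration. Taking $p=p_k$ in the energy identity above and inserting the Gagliardo-Nirenberg-Sobolev inequality applied to $U^{(m+p_k-1)/2}$ produces a recursion of the form
\begin{equation*}
\|U(\cdot,t)\|_{L^{p_{k+1}}}\;\le\;C^{k}\,t^{-\theta_k}\,\|U(\cdot,t/2)\|_{L^{p_k}}^{\sigma_k}.
\end{equation*}
Choosing $p_k=2^k$ and summing the resulting telescoping series of exponents collapses the iteration to the bound $\|U(\cdot,t)\|_{L^\infty}\le C\,\|U_0\|_{L^1}^{2a_2}\,t^{-a_1}$ with the specific values $a_1=n/(n(m-1)+2)$ and $a_2=1/(n(m-1)+2)$. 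These exponents are also forced by the mass-preserving self-similar scaling symmetry of the PME, which provides an independent consistency check on the output of the iteration.
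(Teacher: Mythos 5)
The paper does not prove this lemma; it is stated as a recollection (``cf.\ Chapter 9 of \cite{Va1}'') of classical facts about the PME Cauchy problem, so there is no in-paper argument to compare against. Your sketch correctly reproduces the standard construction underlying that reference: approximation by non-degenerate Cauchy--Dirichlet problems on expanding balls with monotone limits, the energy identity $\tfrac{1}{m+1}\tfrac{d}{dt}\int U^{m+1}+\int|\nabla U^m|^2=0$ giving $U^m\in L^2\bigl(0,\infty;H^1(\R^n)\bigr)$, Oleinik/duality uniqueness for $W=U_1-U_2$, $L^p$-contraction from testing with $p\,U^{p-1}$ (with $p=\infty$ by comparison at the approximating level), and the $L^1$--$L^\infty$ smoothing via Moser iteration, whose exponents $a_1$ and $2a_2$ are indeed forced by the mass-preserving self-similar scaling. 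The only cosmetic slip is in the iteration schematic: the recursion should link $\|U(\cdot,t)\|_{L^{p_{k+1}}}$ to $\|U(\cdot,t_k)\|_{L^{p_k}}$ along a dyadic time sequence (e.g.\ $t_k\downarrow 0$, anchoring the final step with mass conservation $\|U(\cdot,t_k)\|_{L^1}\le\|U_0\|_{L^1}$), not to the single fixed time $t/2$ at every step; with that adjustment the argument closes and matches the cited source.
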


\begin{lemma}[Mass conservation of PME in \cite{Va1}]\label{lem-Mass-conservation-of-PME}
Under the hypothesis of Lemma \ref{lem-cf-chapter-9-of-Va1}, we have
\begin{equation*}
\int_{\R^n}U(x,t)\,dx=\int_{\R^n}U_0(x)\,dx \qquad \mbox{for every $t>0$}.
\end{equation*}
\end{lemma}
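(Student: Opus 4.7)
The plan is to test the weak formulation of \eqref{eq-PME-satisfied-by-U-total-species} against a smooth spatial cutoff and pass to the limit, controlling the error by the $L^\infty$ decay estimate from Lemma \ref{lem-cf-chapter-9-of-Va1}. The strategy is routine for $L^1$-mass conservation of porous-medium-type equations, but I will spell out the two ingredients that make it work here.

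First I would fix $t>0$ and choose a family of cutoffs $\zeta_R\in C_c^{\infty}(\R^n)$ with $0\leq \zeta_R\leq 1$, $\zeta_R\equiv 1$ on $B_R$, $\supp\zeta_R\subset B_{2R}$, and $|\La\zeta_R|\leq C/R^2$. To turn $\zeta_R$ into an admissible test function in \eqref{eq-identity--of-formula-for-weak-solution} (applied with $u=U$), I would multiply by a Lipschitz time cutoff $\eta_h$ which equals $1$ on $[0,t]$, decreases linearly to $0$ on $[t,t+h]$, and vanishes beyond. Plugging $\vp(x,s)=\zeta_R(x)\eta_h(s)$ in, using $mU^{m-1}\D U=\D U^m$ and integrating by parts in $x$ (legitimate because $\zeta_R$ has compact support and $U^m\in L^2_{\text{loc}}(H^1)$), then sending $h\to 0$ with the $L^1$-continuity in time of $s\mapsto U(\cdot,s)$, gives
\begin{equation*}
\int_{\R^n}U(x,t)\zeta_R(x)\,dx-\int_{\R^n}U_0(x)\zeta_R(x)\,dx=\int_0^t\int_{\R^n}U^m(x,s)\,\La\zeta_R(x)\,dx\,ds.
\end{equation*}

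Next I would estimate the right-hand side. Since $|\La\zeta_R|\leq CR^{-2}$ is supported in the annulus $B_{2R}\setminus B_R$, it is bounded by $CR^{-2}\int_0^t\int_{B_{2R}\setminus B_R}U^m\,dx\,ds$. Writing $U^m=U^{m-1}\cdot U$, bounding $\|U(\cdot,s)\|_\infty^{m-1}\leq Cs^{-a_1(m-1)}\|U_0\|_1^{2a_2(m-1)}$ by Lemma \ref{lem-cf-chapter-9-of-Va1} and using $\|U(\cdot,s)\|_1\leq \|U_0\|_1$, one verifies $U^m\in L^1(\R^n\times(0,t))$ because the exponent satisfies $a_1(m-1)=\frac{n(m-1)}{n(m-1)+2}<1$. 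For each fixed $s>0$ one has $\int_{B_{2R}\setminus B_R}U(\cdot,s)\,dx\to 0$ as $R\to\infty$ (since $U(\cdot,s)\in L^1$), so dominated convergence makes the right-hand side vanish as $R\to\infty$; the additional $R^{-2}$ factor is not even needed.

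Finally, on the left-hand side, $\zeta_R\nearrow 1$ pointwise, so monotone convergence together with $U(\cdot,t),U_0\in L^1(\R^n)$ yields
\begin{equation*}
\int_{\R^n}U(x,t)\zeta_R(x)\,dx\to\int_{\R^n}U(x,t)\,dx,\qquad \int_{\R^n}U_0(x)\zeta_R(x)\,dx\to\int_{\R^n}U_0(x)\,dx,
\end{equation*}
and mass conservation follows. The only mildly delicate point is the time-regularization step justifying the $\eta_h$ argument, which is standard for weak solutions of parabolic problems; the tail estimate on $U^m$ is routine once Lemma \ref{lem-cf-chapter-9-of-Va1} is in hand, so no serious obstacle arises.
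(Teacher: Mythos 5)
Your argument is correct and is essentially the standard proof of $L^1$-mass conservation for the porous medium equation; the paper itself does not prove this lemma but cites it from \cite{Va1} (Chapter~9), where a two-derivative cutoff argument of exactly this type appears. Each step checks out: testing against $\zeta_R\eta_h$, integrating by parts once more using $\supp\zeta_R$ compact and $U^m\in L^2(0,T;H^1)$, sending $h\to 0$ by $L^1$-time-continuity, and verifying $U^m\in L^1(\R^n\times(0,t))$ from the smoothing estimate $\|U(\cdot,s)\|_\infty\leq C\|U_0\|_1^{2a_2}s^{-a_1}$ together with $\|U(\cdot,s)\|_1\leq\|U_0\|_1$, where $a_1(m-1)=\tfrac{n(m-1)}{n(m-1)+2}<1$ makes the singularity at $s=0$ integrable. (You have a harmless sign flip on the right-hand side after the spatial integration by parts, but the term vanishes in the limit anyway.)

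It is worth noting that this differs somewhat from the paper's own proof of the companion statement for $u$ in Lemma~\ref{lem-Mass-conservation-of-u}: there the authors use a one-derivative cutoff and bound $\bigl|\int u(t)\xi_l-\int u_0\xi_l\bigr|$ by $\|\nabla\xi_l\|_\infty$ times the $L^2$-tail of $U^{m-1}\nabla u$, exploiting the membership $U^{m-1}\nabla u\in L^2$ coming from Lemma~\ref{lem-basic-space-where-a-weak-solution-belongs}. For $U$ itself the natural thing, as you do, is to write $mU^{m-1}\nabla U=\nabla U^m$ and push both derivatives onto $\zeta_R$, replacing an $L^2$ gradient bound by an $L^1$ bound on $U^m$. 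Both routes are valid; yours is the one that uses only the information packaged in Lemma~\ref{lem-cf-chapter-9-of-Va1} and avoids any appeal to gradient integrability.
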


\subsection{Uniqueness and existence of solution $u$ of \eqref{eq-for-each-population-u-i}}

With the properties of $U$, we will consider the uniqueness and existence of weak solution $u$ of \eqref{eq-for-each-population-u-i}.

\begin{lemma}[Uniqueness of solutions]\label{lem-uniquness-of-weak-solution-2}
The Problem \eqref{eq-for-each-population-u-i} has at most one weak solution if $u\in L^2\left(\R^n\right)$.
\end{lemma}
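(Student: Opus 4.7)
My plan is a standard energy argument. Let $u_1$ and $u_2$ be two weak solutions of \eqref{eq-for-each-population-u-i} sharing the initial datum $u_0$, and set $w := u_1-u_2$. Since $U$ is given a priori and the equation is linear in $u$, $w$ solves weakly
\[
w_t = \nabla\cdot(m\,U^{m-1}\nabla w),\qquad w(\cdot,0)\equiv 0.
\]
Formally testing against $w$ produces
\[
\tfrac{1}{2}\tfrac{d}{dt}\int_{\R^n}w^2\,dx + \int_{\R^n}m\,U^{m-1}|\nabla w|^2\,dx = 0,
\]
and the non-negativity of $U^{m-1}$ together with $w(\cdot,0)=0$ forces $\|w(\cdot,t)\|_{L^2}\equiv 0$, i.e. $u_1\equiv u_2$. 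The remaining task is to legitimize this identity within the class of test functions admitted by \eqref{eq-identity--of-formula-for-weak-solution}.

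Because $w$ is neither smooth in time nor of compact spatial support, I would introduce the Steklov average $[w]_h(x,t):=h^{-1}\int_t^{t+h}w(x,s)\,ds$ and a spatial cut-off $\eta_R\in C_c^{\infty}(\R^n)$ with $\eta_R\equiv 1$ on $B_R$, $\eta_R\equiv 0$ outside $B_{2R}$, and $|\nabla\eta_R|\leq C/R$. Inserting $\varphi=[w]_h\,\eta_R^{2}$ into \eqref{eq-identity--of-formula-for-weak-solution} for $u_1$ and $u_2$ and subtracting yields an identity with three ingredients: after integrating by parts in $t$ and using $w(\cdot,0)=0$, the time-derivative term collapses to $\tfrac12\int_{\R^n}[w]_h^{2}(\cdot,t)\eta_R^{2}\,dx$; the principal diffusion term $\int_0^t\!\int_{\R^n}m\,U^{m-1}|\nabla[w]_h|^{2}\eta_R^{2}\,dx\,ds$ is non-negative; and a cross term involves $\nabla\eta_R$. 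Sending $h\downarrow 0$ via the $L^{2}$ continuity of Steklov averages --- justified because $w\in L^{2}$ and $U^{m-1}\nabla w\in L^{2}(0,T;L^{2}(\R^n))$ by the definition of weak solution --- replaces $[w]_h$ by $w$ throughout.

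It remains to pass $R\to\infty$. The cross term has the form
\[
\int_0^t\!\!\int_{\R^n} 2m\,U^{m-1}\nabla w\cdot\nabla\eta_R\;w\,\eta_R\,dx\,ds,
\]
and Cauchy--Schwarz bounds it by $\tfrac{C}{R}\|U^{m-1}\nabla w\|_{L^{2}(\R^n\times(0,t))}\|w\|_{L^{2}(\R^n\times(0,t))}$, which tends to $0$ under the standing hypothesis $u\in L^{2}(\R^n)$ combined with the weak-solution regularity $U^{m-1}|\nabla u|\in L^{2}$. The remaining terms converge by monotone convergence, leaving
\[
\tfrac12\int_{\R^n}w^{2}(\cdot,t)\,dx + \int_0^t\!\!\int_{\R^n} m\,U^{m-1}|\nabla w|^{2}\,dx\,ds \leq 0,
\]
so both non-negative contributions must vanish and $w\equiv 0$ on $\R^n\times(0,T)$.

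The principal technical obstacle is the bookkeeping around Steklov averaging and the cut-off limit --- in particular, verifying that the initial value $w(\cdot,0)=0$ is attained strongly enough in $L^{2}$ to kill the boundary contribution at $t=0$, and that $U^{m-1}\nabla w$ and $w$ lie simultaneously in the $L^{2}$-spaces needed for the Cauchy--Schwarz estimate to close globally. Importantly, the degeneracy $U^{m-1}=0$ creates no genuine trouble here, because the problem is linear in the difference $w$ and the degenerate term enters the energy identity with the favourable sign.
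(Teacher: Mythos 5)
Your argument is correct and uses the same underlying mechanism as the paper --- an $L^2$ energy estimate obtained by testing against (a Steklov/cut-off regularization of) the solution difference and using the sign $m\,U^{m-1}\ge 0$. The difference is which test function you insert: you use the full difference $w = u_1 - u_2$, whereas the paper tests against the \emph{positive part} $v_+ = (u_1-u_2)_+$. Testing with $v_+$ yields the inequality
\[
\int_0^T\!\!\int_{\R^n} m\,U^{m-1}\,|\nabla v_+|^2\,dx\,dt + \tfrac12\int_{\R^n} v_+^2(x,T)\,dx \;\le\; \tfrac12\int_{\R^n} v_+^2(x,0)\,dx,
\]
which is not merely a uniqueness statement but a \emph{comparison principle}: if $u_{0,1}\le u_{0,2}$ then $u_1\le u_2$ a.e.; applying this in both directions gives uniqueness. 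This extra strength is not gratuitous --- immediately after the lemma the paper invokes it to deduce $0\le u\le U$ in $\R^n\times[0,\infty)$ from the hypothesis $0\le u_0\le U_0$, a fact used throughout the rest of the paper (boundedness of $u$, equal supports, energy bounds, etc.). Your route proves exactly the stated lemma and is technically fine (the Steklov-average and cut-off bookkeeping you outline, including the $O(1/R)$ cross term killed by the global $L^2$ bounds on $w$ and on $U^{m-1}\nabla w$, is the standard way to legitimize the formal computation), but a reader following the paper forward would then need to repeat the argument with $w_+$ to recover the comparison principle that the text silently extracts from this lemma.
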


\begin{proof}
Let $u_1$ and $u_2$ be two solutions of \eqref{eq-for-each-population-u-i} with initial data $u_{0,1}$ and $u_{0,2}$ respectively. Then $v=u_1-u_2$ is also a solution of \eqref{eq-for-each-population-u-i} with initial data $v_0=u_{0,1}-u_{0,2}$. Then we have
\begin{equation}\label{eq-energy-type-inequality-0of-=difference-u-1-and-u-2}
\int_{0}^T\int_{\R^n}m\,U^{m-1}\left|\nabla v_+\right|^2\,dxdt+\frac{1}{2}\int_{\R^n}v_+^2\left(x,T\right)\,dx\,\leq\,\frac{1}{2}\int_{\R^n}v_+^2\left(x,0\right)\,dx.
\end{equation}
Thus if  $u_{0,1}(x)\leq u_{0,2}(x)$ for all $x\in\R^n$, i.e., $\left(v_0\right)_+(x)=0$ for all $x\in\R^n$, then by \eqref{eq-energy-type-inequality-0of-=difference-u-1-and-u-2}
\begin{align}
&v_+\left(x,t\right)=0 \qquad \mbox{a.e. in $\R^n\times(0,T)$}\notag\\
&\Rightarrow \qquad u_1(x,t)\leq u_{2}(x,t) \qquad \mbox{a.e. in $\R^n\times(0,T)$}. \label{eq-half-inequality-for-uniqueness-of-solution-of-main-1}
\end{align}
Similarly, we can also have
\begin{equation}\label{eq-half-inequality-for-uniqueness-of-solution-of-main-2}
u_1(x,t)\geq u_{2}(x,t) \qquad \mbox{a.e. in $\R^n\times(0,T)$}
\end{equation}
if $u_{0,1}(x)\geq u_{0,2}(x)$ for all $x\in\R^n$. By \eqref{eq-half-inequality-for-uniqueness-of-solution-of-main-1} and \eqref{eq-half-inequality-for-uniqueness-of-solution-of-main-2}, the lemma follows.
\end{proof}
Let $u$ be a solution of \eqref{eq-for-each-population-u-i} which satisfies \eqref{eq-basic-property-of-u-smaller-than-U-at-zero}. Then by Lemma \ref{lem-uniquness-of-weak-solution-2},
\begin{equation}\label{eq-basic-property-of-u-smaller-than-U}
0\leq u(x,t)\leq U(x,t) \qquad \forall x\in\R^n,\,\,t\geq 0.
\end{equation}

As a consequence of \eqref{eq-basic-property-of-u-smaller-than-U}, we can get the functional space to which the solutions of \eqref{eq-for-each-population-u-i} are belonging.

\begin{lemma}\label{lem-basic-space-where-a-weak-solution-belongs}
Let $m>1$ and let $U$ be the solution of \eqref{eq-PME-satisfied-by-U-total-species} with initial data $U_0\in L^1\left(\R^n\right)\cap L^{1+m}\left(\R^n\right)$ nonnegative and compactly supported. Then solution $u$ of \eqref{eq-for-each-population-u-i} with initial condition \eqref{eq-basic-property-of-u-smaller-than-U-at-zero} satisfies
\begin{equation*}
U^{m-1}\left|\nabla u\right|\in L^2\left(0,T:L^2\left(\R^n\right)\right) .
\end{equation*}
\end{lemma}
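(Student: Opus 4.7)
The plan is to establish the claim via a standard energy estimate: I will invoke the inequality \eqref{eq-energy-type-inequality-0of-=difference-u-1-and-u-2} from the proof of Lemma~\ref{lem-uniquness-of-weak-solution-2} with $u_1=u$ and $u_2=0$ (so that $v=u$ and $v_+=u$) to obtain the natural energy bound $\int U^{m-1}|\nabla u|^2\,dx\,dt<\infty$, and then upgrade it to the claimed $L^2$-integrability of $U^{m-1}|\nabla u|$ by combining with the $L^\infty$-smoothing from Lemma~\ref{lem-cf-chapter-9-of-Va1}.

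First I will verify that $u_0\in L^2(\R^n)$. Since $0\le u_0\le U_0$ with $U_0$ compactly supported and in $L^{m+1}(\R^n)$ (and $m+1>2$ because $m>1$), H\"older's inequality on $\supp U_0$ gives $U_0\in L^2(\R^n)$ and hence $u_0\in L^2(\R^n)$. Applying \eqref{eq-energy-type-inequality-0of-=difference-u-1-and-u-2} with this choice of $u_1,u_2$ then yields
\begin{equation*}
m\int_0^T\!\!\int_{\R^n}U^{m-1}|\nabla u|^2\,dx\,dt + \tfrac{1}{2}\|u(\cdot,T)\|_{L^2(\R^n)}^2 \;\le\; \tfrac{1}{2}\|u_0\|_{L^2(\R^n)}^2,
\end{equation*}
so in particular $\int_0^T\int U^{m-1}|\nabla u|^2\,dx\,dt \le \tfrac{1}{2m}\|u_0\|_{L^2}^2 < \infty$.

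For the upgrade to $U^{m-1}|\nabla u|\in L^2(0,T;L^2(\R^n))$, I will factor $U^{2(m-1)}|\nabla u|^2 = U^{m-1}\cdot\bigl(U^{m-1}|\nabla u|^2\bigr)$ and apply the smoothing bound $\|U(\cdot,t)\|_{L^\infty(\R^n)}\le C\|U_0\|_{L^1}^{2a_2}t^{-a_1}$ from Lemma~\ref{lem-cf-chapter-9-of-Va1}. This immediately gives $U^{m-1}|\nabla u|\in L^2([\tau,T]\times\R^n)$ for every $\tau>0$. To obtain integrability down to $t=0$, I will use a standard approximation: replace $U_0$ by the truncations $U_0^{(k)}=\min(U_0,k)\in L^1\cap L^{m+1}\cap L^\infty$, derive the same estimate for the approximate solutions $U^{(k)}, u^{(k)}$ (for which $\|U^{(k)}\|_{L^\infty([0,T]\times\R^n)}$ is now uniformly bounded), and pass to the limit $k\to\infty$ using weak lower semicontinuity together with the $L^p$-contractivity of the PME.

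The main obstacle is precisely this endpoint issue at $t=0$: the smoothing constant $Ct^{-a_1}$ diverges as $t\to 0$, so the natural energy estimate cannot be multiplied by a uniform pointwise $L^\infty$-bound on $U$ across the whole slab. The truncation/approximation scheme circumvents this by producing uniform estimates for approximating data that converge to $U_0$ in $L^1\cap L^{m+1}$, and the final claim then follows by weak lower semicontinuity of the $L^2$-norm.
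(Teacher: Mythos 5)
Your first step is fine: taking $u_1=u$, $u_2=0$ in \eqref{eq-energy-type-inequality-0of-=difference-u-1-and-u-2} does give
\begin{equation*}
m\int_0^T\!\!\int_{\R^n}U^{m-1}|\nabla u|^2\,dx\,dt\leq \tfrac12\|u_0\|_{L^2}^2<\infty,
\end{equation*}
and the $L^2$--$L^\infty$ smoothing then yields $U^{m-1}|\nabla u|\in L^2([\tau,T]\times\R^n)$ for any $\tau>0$. The problem is your handling of the endpoint $t=0$, and it is a genuine gap. After truncating to $U_0^{(k)}=\min(U_0,k)$ the only $L^\infty$ bound you have on $[0,T]$ is $\|U^{(k)}\|_{L^\infty}\leq k$, so the estimate
\begin{equation*}
\int_0^T\!\!\int_{\R^n}\bigl(U^{(k)}\bigr)^{2(m-1)}|\nabla u^{(k)}|^2\,dx\,dt\leq k^{m-1}\int_0^T\!\!\int_{\R^n}\bigl(U^{(k)}\bigr)^{m-1}|\nabla u^{(k)}|^2\,dx\,dt\leq \frac{k^{m-1}}{2m}\|u_0\|_{L^2}^2
\end{equation*}
carries a constant that blows up as $k\to\infty$. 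Weak lower semicontinuity of the $L^2$-norm requires a $k$-uniform bound on the quantity you want to control, and your scheme does not supply one. (There is a secondary inconsistency: truncating $U_0$ while keeping $u_0$ fixed destroys the ordering $u_0\leq U_0^{(k)}$ wherever $u_0>k$, so the comparison $u^{(k)}\leq U^{(k)}$, which is what makes the whole setup degenerate-but-controlled, is no longer automatic.) In short, testing with $u$ alone gives you only one factor of $U^{m-1}$ in the weight, and multiplying by $\|U\|_{L^\infty}^{m-1}$ cannot produce the missing factor uniformly down to $t=0$ unless $U_0\in L^\infty$, which is not assumed.

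The paper avoids this entirely by choosing the test function $U^{m-1}u$ rather than $u$. This places the desired weight $U^{2(m-1)}$ directly into the gradient term of the energy identity, and the error terms coming from $\partial_t(U^{m-1})$ and $\nabla(U^{m-1})$ are absorbed using $u\leq U$ and the $L^2(0,\infty;L^2)$ bound on $\nabla U^m$ furnished by the PME theory. That way the resulting bound depends only on $\|U_0\|_{L^{m+1}}$ and $\|\nabla U^m\|_{L^2(0,\infty;L^2)}$, with no pointwise $L^\infty$ control on $U$ near $t=0$ needed. If you want to repair your argument, the fix is not a better approximation scheme but a better test function.
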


\begin{proof}
Multiplying the first equation in \eqref{eq-for-each-population-u-i} by $U^{m-1}u$ and integrating over $\R^n\times(0,\infty)$,  we have
\begin{align}
&\int_{\R^n}U^{m-1}u^2\,dx(t)+\int_{0}^{\infty}\int_{\R^n}m\left(U^{m-1}\left|\nabla u\right|\right)^2\,dxdt\notag\\
&\qquad \qquad \leq \int_{\R^n}U^{m-1}u^2\,dx(0)+\int_{0}^{\infty}\int_{\R^n}m\,u^2\,\left|\nabla U^{m-1}\right|^2\,dxdt+(m-1)\int_{0}^{\infty}\int_{\R^n}U^{m-2}\,u^2\,U_t\,dxdt\notag\\
&\qquad \qquad \leq \int_{\R^n}U_0^{m+1}\,dx+\frac{(m-1)^2}{m}\int_{0}^{\infty}\int_{\R^n}\left|\nabla U^{m}\right|^2\,dxdt+(m-1)\int_{0}^{\infty}\int_{\R^n}U^{m-2}\,u^2\,U_t\,dxdt\label{eq-for-L-2-L-2-of-U-m-1-nabla-u}
\end{align}
by  \eqref{eq-basic-property-of-u-smaller-than-U} and Young's inequality. Since $U$ is the solution of \eqref{eq-PME-satisfied-by-U-total-species},
\begin{align}
\int_{0}^{\infty}\int_{\R^n}U^{m-2}\,u^2\,U_t\,dxdt&=-\int_{0}^{\infty}\int_{\R^n}\nabla\left(U^{m-2}\,u^2\right)\cdot\nabla U^m\,dxdt\notag\\
&\leq \frac{\left|m-2\right|}{m}\int_{0}^{\infty}\int_{\R^n}\left|\nabla U^m\right|^2\,dxdt+2\int_{0}^{\infty}\int_{\R^n}U^{m-1}\left|\nabla u\right|\left|\nabla U^m\right|\,dxdt\notag\\
&\leq \left(\frac{\left|m-2\right|}{m^2}+2\right)\int_{0}^{\infty}\int_{\R^n}\left|\nabla U^m\right|^2\,dxdt+\frac{1}{2}\int_{0}^{\infty}\int_{\R^n}\left(U^{m-1}\left|\nabla u\right|\right)^2\,dxdt.\label{eq-controlling-a-term-in-eq-for-L-2-L-2-of-U-m-1-nabla-u}
\end{align}
By \eqref{eq-for-L-2-L-2-of-U-m-1-nabla-u} and \eqref{eq-controlling-a-term-in-eq-for-L-2-L-2-of-U-m-1-nabla-u},
\begin{equation*}
\int_{0}^{\infty}\int_{\R^n}\left(U^{m-1}\left|\nabla u\right|\right)^2\,dxdt\leq C\left(\left\|U_0\right\|_{L^{1+m}\left(\R^n\right)},\left\|\nabla U^m\right\|_{L^2\left(0,\infty:L^2\left(\R^n\right)\right)}\right)<\infty
\end{equation*}
and the lemma follows.
\end{proof}

We now are ready for the existence of weak solution of \eqref{eq-for-each-population-u-i}.

\begin{lemma}\label{lem-existence-of-soluiton-u-by-u-epsilon-M-weak-convergence}
Let $m>1$ and let $U$ be the solution of \eqref{eq-PME-satisfied-by-U-total-species} with initial data $U_0\in L^1\left(\R^n\right)\cap L^{1+m}\left(\R^n\right)$ nonnegative and compactly supported. Let  $u_0\in L^1\left(\R^n\right)$ be a function with $0\leq u_0\leq U_0$,  Then there exists a weak solution $u$ of \eqref{eq-for-each-population-u-i} which satisfies \eqref{eq-basic-property-of-u-smaller-than-U}.
\end{lemma}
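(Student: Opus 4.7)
I would construct the weak solution $u$ as the limit of solutions to a uniformly parabolic regularization, and then invoke Lemma \ref{lem-uniquness-of-weak-solution-2} to deduce the pointwise sandwich $0\le u\le U$.

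\textbf{Step 1 (Regularization).} For parameters $\epsilon,M>0$, replace the coefficient $U^{m-1}$ by $U_\epsilon^{m-1}$, where $U_\epsilon:=U*\rho_\epsilon+\epsilon$ is a space-time mollification of $U$ shifted so as to be smooth and bounded below by $\epsilon$. Approximate the initial datum by smooth compactly supported $u_{0,M}$ with $0\le u_{0,M}\le\min\{u_0,M\}$ and $u_{0,M}\to u_0$ in $L^1(\R^n)$. On each slab $\R^n\times[\tau,T]$ with $\tau>0$, Lemma \ref{lem-cf-chapter-9-of-Va1} yields $U_\epsilon\in[\epsilon,\Lambda_\tau+\epsilon]$, so the Cauchy problem
\begin{equation*}
(u_\epsilon^M)_t=\nabla\cdot(m\,U_\epsilon^{m-1}\nabla u_\epsilon^M),\qquad u_\epsilon^M(\cdot,\tau)=u_{0,M},
\end{equation*}
is uniformly parabolic with smooth bounded coefficient; classical linear parabolic theory gives a unique smooth solution, and the weak maximum principle gives $0\le u_\epsilon^M\le M$.

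\textbf{Step 2 (Uniform estimates and limit).} Repeating the computation of Lemma \ref{lem-basic-space-where-a-weak-solution-belongs} with $U_\epsilon$ in place of $U$, and using that $U$ solves \eqref{eq-PME-satisfied-by-U-total-species} to rewrite the time derivative of $U_\epsilon^{m-1}$, one derives
\begin{equation*}
\int_\tau^T\!\!\int_{\R^n}\bigl(U_\epsilon^{m-1}|\nabla u_\epsilon^M|\bigr)^2\,dx\,dt\le C,
\end{equation*}
with $C$ independent of $\epsilon,M,\tau$. A Lions--Aubin type argument, using the PDE to control $\partial_t u_\epsilon^M$ in a negative Sobolev norm, yields, along a subsequence, weak-$*$ $L^\infty$ and strong $L^2_{\mathrm{loc}}$ convergence $u_\epsilon^M\to u$, together with weak $L^2(0,T;L^2(\R^n))$ convergence of the flux $U_\epsilon^{m-1}\nabla u_\epsilon^M$. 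Passing to the limit in the weak formulation for $u_\epsilon^M$ produces a function $u$ satisfying \eqref{eq-identity--of-formula-for-weak-solution} with $U^{m-1}|\nabla u|\in L^2(0,T;L^2(\R^n))$; sending $\tau\to 0$ and $M\to\infty$ completes the construction. Since $U$ itself is trivially a weak solution of \eqref{eq-for-each-population-u-i} with the common coefficient $U^{m-1}$, Lemma \ref{lem-uniquness-of-weak-solution-2} applied to the pairs $(0,u)$ and $(u,U)$, together with $0\le u_0\le U_0$, delivers \eqref{eq-basic-property-of-u-smaller-than-U}.

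\textbf{Main obstacle.} The delicate step is identifying the weak limit of the flux $U_\epsilon^{m-1}\nabla u_\epsilon^M$ as $U^{m-1}\nabla u$, since $\nabla u_\epsilon^M$ itself is not controlled on the degenerate set $\{U=0\}$. The workable route is to pair the flux against a smooth compactly supported test function $\varphi$ and observe that $U_\epsilon^{m-1}\nabla\varphi\to U^{m-1}\nabla\varphi$ strongly in $L^2$, so the flux term becomes a strong--weak product on any cylinder where $U\ge\delta$; the contribution from $\{U<\delta\}$ is absorbed by the uniform $L^2$ bound on $U_\epsilon^{m-1}\nabla u_\epsilon^M$ combined with dominated convergence as $\delta\to 0$.
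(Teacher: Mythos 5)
Your proposal follows essentially the same template as the paper's proof — regularize to a uniformly parabolic Cauchy problem, derive an energy estimate that is uniform in the regularization parameters, pass to a limit, verify the weak formulation, and then invoke Lemma \ref{lem-uniquness-of-weak-solution-2} (together with the fact that $U$ itself solves \eqref{eq-for-each-population-u-i}) to obtain the sandwich $0\le u\le U$ — so the two arguments are in the same family. The concrete differences are in the choice of regularization and how they make the limit passage clean. You replace $U^{m-1}$ by $(U\ast\rho_\epsilon+\epsilon)^{m-1}$ and work on time slabs $[\tau,T]$ to keep the coefficient bounded; the paper instead truncates, $U_M=\min(U,M)$, and uses $U_M^{m-1}+\epsilon$ as the coefficient. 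The paper's choice buys two simplifications: the additive $\epsilon$ makes the energy estimate \eqref{eq-weighted-bound} split cleanly into a $U_M^{m-1}|\nabla u_{\epsilon,M}|^2$ piece and an $\epsilon|\nabla u_{\epsilon,M}|^2$ piece, the latter giving $\epsilon\nabla u_{\epsilon,M}\to0$ in $L^2_{\mathrm{loc}}$ outright; and, more importantly, the $L^\infty$ decay from Lemma \ref{lem-cf-chapter-9-of-Va1} gives a time $t_M\to0$ after which $U_M\equiv U$ \emph{exactly}, so for $t\ge t_M$ the flux weight already coincides with the target weight $U^{m-1}$ and there is no $\epsilon$-dependent mismatch to resolve.

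On the flux-identification point you flag as the ``main obstacle,'' your proposed resolution is not quite right as stated. Writing $\int U_\epsilon^{m-1}\nabla u_\epsilon^M\cdot\nabla\varphi$ as a strong--weak product with the \emph{full} power $U_\epsilon^{m-1}\nabla\varphi$ would require a uniform $L^2$ bound on $\nabla u_\epsilon^M$ itself, which is unavailable on the degenerate set. And the dominated-convergence step over $\{U<\delta\}$ does not close the gap, because $|\{U<\delta\}|$ does not shrink to zero as $\delta\to0$ — it converges to $|\{U=0\}|$ — so the uniform $L^2$ bound on the flux alone cannot make the contribution small. The correct split is with \emph{half} powers: write
\begin{equation*}
\int U_\epsilon^{m-1}\nabla u_\epsilon^M\cdot\nabla\varphi
=\int \bigl(U_\epsilon^{\frac{m-1}{2}}\nabla u_\epsilon^M\bigr)\cdot\bigl(U_\epsilon^{\frac{m-1}{2}}\nabla\varphi\bigr),
\end{equation*}
note that $U_\epsilon^{\frac{m-1}{2}}\nabla u_\epsilon^M$ is bounded in $L^2$ by the energy estimate and $U_\epsilon^{\frac{m-1}{2}}\nabla\varphi\to U^{\frac{m-1}{2}}\nabla\varphi$ strongly in $L^2$; the limiting test factor $U^{\frac{m-1}{2}}\nabla\varphi$ then \emph{vanishes} on $\{U=0\}$, so whatever the weak limit of $U_\epsilon^{\frac{m-1}{2}}\nabla u_\epsilon^M$ is on the degenerate set is irrelevant, and on $\{U>0\}$ (where the equation is locally uniformly parabolic) it is identified as $U^{\frac{m-1}{2}}\nabla u$ by the usual argument. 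With this repair your proof route is sound; the paper's truncation scheme simply sidesteps the half-power manipulation because its flux weight is already $U^{m-1}$ after time $t_M$.
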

\begin{proof}
For the functions $u_0$, $U$ and constants $M>1$, $0<\epsilon<1$, let
\begin{equation*}
\begin{cases}
\begin{aligned}
u_{0,M}(x,t)&=\min\left(u_0(x),M\right)\\
U_{M}(x,t)&=\min\left(U(x,t),M\right)
\end{aligned}
\end{cases}
\end{equation*}
Then, for any $0<\epsilon<1$, $M>1$ there exists the solution $u_{\epsilon,M}$ of 
\begin{equation}\label{eq-PDE-with-Uniformly-parabolic-U-epsilon-M-1}
\begin{cases}
\begin{aligned}
\left(u_{\epsilon,M}\right)_{t}&=\nabla\left(m\,\left(U_{M}^{m-1}+\epsilon\right)\nabla u_{\epsilon,M}\right) \qquad \mbox{in $\R^n\times\left(0,\infty\right)$}\\
u_{\epsilon,M}(x,0)&=u_{0,M}(x) \qquad \qquad \forall x\in\R^n.
\end{aligned}
\end{cases}
\end{equation}
Multiplying the first equation in \eqref{eq-PDE-with-Uniformly-parabolic-U-epsilon-M-1} by $u_{\epsilon,M}$ and integrating over $\R^n\times\left(0,\infty\right)$, we have
\begin{align}
\int_{\R^n}\left|u_{\epsilon,M}\right|^2dx(t)&+\int_{t}^{\infty}\int_{\R^n}U_M^{m-1}\left|\nabla u_{\epsilon,M}\right|^2\,dxdt\notag\\
&+\int_t^{\infty}\int_{\R^n}\left(\epsilon^{\frac{1}{2}}\left|\nabla u_{\epsilon,M}\right|\right)^2\,dxdt\leq C\left(\left\|u_0\right\|_{L^2},m\right), \qquad \forall t>0. \label{eq-weighted-bound}
\end{align}
By Lemma \ref{lem-cf-chapter-9-of-Va1}, for any $M>0$ there exists a constant $t_{M}>0$ such that
\begin{equation}\label{eq-time-t-M-which-equal-U=and-U-M}
t_M\to 0 \quad \mbox{as $M\to\infty$} \qquad \mbox{and} \qquad U_M(x,t)=U(x,t) \quad \forall x\in\R^n,\,\,t\geq t_M.
\end{equation}
By \eqref{eq-weighted-bound} and \eqref{eq-time-t-M-which-equal-U=and-U-M}, there exists a some function $u$ such that
\begin{equation}\label{eqw-convergnece-as-epsilon-and-M-to-0-and-infty-in-L-2-loc}
\begin{cases}
\begin{aligned}
u_{\epsilon,M}&\to u \qquad \qquad \qquad \mbox{in $L^2_{loc}\cap L^{1+m}_{loc}$}\\
U^{\frac{m-1}{2}}\left|\nabla u_{\epsilon,M}\right|&\to U^{\frac{m-1}{2}}\left|\nabla u\right| \qquad \mbox{in $L^2_{loc}$}\\
\epsilon\left|\nabla u_{\epsilon,M}\right|&\to 0 \qquad \qquad \qquad \mbox{in $L^2_{loc}$}
\end{aligned}
\end{cases}
\end{equation}
as  $\epsilon\to 0$ and $M\to \infty$. Choosing $\vp\in C^{2,1}_0\left(\R^n\times(0,\infty)\right)$, multiplying it to the first equation of \eqref{eq-PDE-with-Uniformly-parabolic-U-epsilon-M-1}, and integrating over $\R^n\times(0,\infty)$, we have
\begin{equation}\label{eq-weak-formula-of-u-epsilon-M-09}
\int_0^{\infty}\int_{\R^n}\left\{m\,U^{m-1}\nabla u_{\epsilon,M}\cdot\nabla\vp+m\epsilon\nabla u_{\epsilon,M}\nabla\vp-u_{\epsilon,M}\vp_t\right\}\,dxdt=0
\end{equation}
for sufficiently large $M>0$. Letting $\epsilon\to0$ and then $M\to\infty$ in \eqref{eq-weak-formula-of-u-epsilon-M-09}, by \eqref{eqw-convergnece-as-epsilon-and-M-to-0-and-infty-in-L-2-loc} $u$ satisfies
\begin{equation}\label{eq-weak-formula-of-u-0089}
\int_0^{\infty}\int_{\R^n}\left\{m\,U^{m-1}\nabla u\cdot\nabla\vp-u\vp_t\right\}\,dxdt=0 \qquad \forall \vp\in C^{2,1}_0\left(\R^n\times(0,\infty)\right).
\end{equation}
\indent We now are going to show that 
\begin{equation}\label{eq-converges-of-u-to-initial-data-as-t-to-zero}
u\left(\cdot,t\right)\to u_0\qquad  \mbox{in $L^1$ as $t\to 0^+$}.
\end{equation} 
Let $\eta(x)\in C_0^{2}\left(\R^n\right)$ and $0<t<1$. Multiply the first equation of \eqref{eq-PDE-with-Uniformly-parabolic-U-epsilon-M-1} by $\eta$, and integrate it over $\R^n\times(0,t)$. Then by an argument similar to the proof of Lemma \ref{lem-basic-space-where-a-weak-solution-belongs} we have
\begin{equation}\label{compare-between-u-and-u-0-with-eta23579}
\begin{aligned}
&\left|\int_{\R^n}u_{\epsilon,M}(x,t)\eta(x)\,dx-\int_{\R^n}u_{0,\epsilon,M}(x)\eta(x)\,dx\right|\notag\\
&\qquad \qquad  \leq \int_{0}^{t}\int_{\R^n}U^{m-1}_{\epsilon,M}(x,t)\left|\nabla u_M(x,t)\right|\left|\nabla\eta(x)\right|\,dxdt\\
&\qquad \qquad \leq C\left(\left\|U_0\right\|_{L^1\left(\R^n\right)},\left\|\nabla U^m\right\|_{L^2\left(\R^n\times(0,1)\right)},\left\|\nabla\eta\right\|_{L^{\infty}}\right)\sqrt{t} \qquad \forall 0<t<1
\end{aligned}
\end{equation}
Letting  $\epsilon\to 0$, $M\to\infty $ and then $t\to 0$ in \eqref{compare-between-u-and-u-0-with-eta23579}, the claim follows. Therefore $u$ is a weak solution of \eqref{eq-for-each-population-u-i} which satisfies \eqref{eq-basic-property-of-u-smaller-than-U} and the lemma follows.
\end{proof}

\subsection{Equivalence properties on $u$ and $U$}
Since the equations satisfied by $u$ and $U$ have the the same diffusion coefficients $U^{m-1}$, it can be expect that the solutions of \eqref{eq-for-each-population-u-i}  and \eqref{eq-PME-satisfied-by-U-total-species} have a lot things in common. By an argument similar to the proof of 9.15 of \cite{Va1}, we  have an important conservation.
\begin{lemma}\label{lem-Mass-conservation-of-u}
For the solution $U$ of \eqref{eq-PME-satisfied-by-U-total-species} with initial data $U_0\in L^1\left(\R^n\right)\cap L^{1+m}\left(\R^n\right)$ nonnegative and compactly supported, let $u$ be a weak solution of \eqref{eq-for-each-population-u-i}. Then, for every $t>0$ we have
\begin{equation*}
\int_{\R^n}u(x,t)\,dx=\int_{\R^n}u_{0}\,dx.
\end{equation*}
\end{lemma}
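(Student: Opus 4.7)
The strategy is to follow the standard mass-conservation proof for the porous medium equation (Section 9.15 of \cite{Va1}), adapted to our weighted linear equation. The central ingredients will be: (i) a spatial cutoff test function, (ii) the energy bound from Lemma \ref{lem-basic-space-where-a-weak-solution-belongs}, and (iii) finite speed of propagation for $U$, which confines the supports of both $U$ and $u$.

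Concretely, I would fix $t > 0$ and choose $\phi_R \in C_c^{\infty}(\R^n)$ with $\phi_R \equiv 1$ on $B_R$, $\supp \phi_R \subset B_{2R}$, and $|\nabla \phi_R| \leq C/R$, together with a smooth time cutoff $\chi_{\delta}(s)$ equal to $1$ on $[0,t]$ and vanishing on $[t+\delta,\infty)$. Using $\varphi(x,s) = \phi_R(x)\chi_\delta(s)$ in the weak formulation \eqref{eq-identity--of-formula-for-weak-solution} and passing to the limit $\delta \to 0$ (via the Lebesgue differentiation theorem on the time variable, which is justified once we know $t \mapsto \int u(\cdot,t)\phi_R$ is continuous in $L^1$ away from $t=0$), I obtain the identity
\begin{equation*}
\int_{\R^n} u(x,t)\phi_R(x)\,dx - \int_{\R^n} u_0(x)\phi_R(x)\,dx = -\int_0^t\int_{\R^n} m\, U^{m-1}\nabla u \cdot \nabla \phi_R\,dx\,ds.
\end{equation*}

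The right-hand side is then handled by Cauchy--Schwarz,
\begin{equation*}
\left|\int_0^t\int_{\R^n} m\,U^{m-1}\nabla u\cdot\nabla\phi_R\,dx\,ds\right| \leq m\left(\int_0^t\!\!\int_{\R^n} U^{m-1}|\nabla u|^2\,dx\,ds\right)^{1/2}\!\left(\int_0^t\!\!\int_{B_{2R}\setminus B_R} U^{m-1}|\nabla \phi_R|^2\,dx\,ds\right)^{1/2}\!,
\end{equation*}
where the first factor is finite by Lemma \ref{lem-basic-space-where-a-weak-solution-belongs}. Since $U_0$ is compactly supported and $U$ solves the porous medium equation, finite speed of propagation (Chapter 14 of \cite{Va1}) provides $R_0 = R_0(t) < \infty$ with $\supp U(\cdot,s) \subset B_{R_0}$ for every $s \in [0,t]$. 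For $R > R_0$ the second factor is zero, so the entire right-hand side vanishes.

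The last step is to let $R \to \infty$ on the left. Because $0 \leq u \leq U$ by \eqref{eq-basic-property-of-u-smaller-than-U}, the function $u(\cdot,s)$ also has support in $B_{R_0}$ for $s \in [0,t]$, and $u_0 \leq U_0$ is compactly supported. Hence $u(\cdot,t)\phi_R \to u(\cdot,t)$ and $u_0\phi_R \to u_0$ pointwise with a uniform $L^1$ majorant, and dominated convergence yields $\int_{\R^n} u(x,t)\,dx = \int_{\R^n} u_0(x)\,dx$. The main obstacle I anticipate is purely technical: justifying the $\delta \to 0$ limit in the time cutoff requires either absolute continuity of $t \mapsto \int u\,\phi_R\,dx$ (which follows from the weak formulation tested against $\phi_R$ alone, since the right-hand side is integrable in time) or, alternatively, running the entire argument at the approximation level $u_{\epsilon,M}$ of Lemma \ref{lem-existence-of-soluiton-u-by-u-epsilon-M-weak-convergence}, where the equation is uniformly parabolic and classical, and only then passing $\epsilon\to 0$, $M\to\infty$ using \eqref{eqw-convergnece-as-epsilon-and-M-to-0-and-infty-in-L-2-loc} together with $u_{0,M}\to u_0$ in $L^1$ (by dominated convergence, since $u_0 \leq U_0 \in L^1$).
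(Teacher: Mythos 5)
Your proof is correct, and it takes a genuinely different (and arguably more rigorous) route than the paper's. The paper uses the annular cutoffs $\xi_l$ and tries to kill the error term
\begin{equation*}
\int_0^t\!\int_{\R^n} m\,U^{m-1}\nabla u\cdot\nabla\xi_l\,dx\,d\tau
\end{equation*}
via Cauchy--Schwarz and the $L^2$-integrability of $U^{m-1}\nabla u$ from Lemma~\ref{lem-basic-space-where-a-weak-solution-belongs}, claiming the tail over $B_l\setminus B_{l-1}$ vanishes. As written, that estimate is missing the factor $\left(t\,|B_l\setminus B_{l-1}|\right)^{1/2}\sim l^{(n-1)/2}$, so it doesn't obviously close; the paper is implicitly using that $U$ (hence $u$) has bounded support, which is exactly the finite speed of propagation that you invoke explicitly. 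Your approach makes the error term \emph{identically} zero for $R>R_0(t)$ rather than merely small, which is cleaner and makes visible the hypothesis (compact support of $U_0$, $m>1$) that is actually doing the work. You also treat the time cutoff $\chi_\delta$ more carefully than the paper, which simply writes $\int_0^t\!\int (u)_\tau\,\xi_l$ as if $u_t$ were a function; your alternative of running the argument at the approximation level $u_{\epsilon,M}$ is the standard clean fix. One small inaccuracy worth flagging: you cite Lemma~\ref{lem-basic-space-where-a-weak-solution-belongs} to assert $\int_0^t\!\int U^{m-1}|\nabla u|^2<\infty$, but that lemma gives $\int_0^t\!\int U^{2(m-1)}|\nabla u|^2<\infty$, a different weight (the bound with weight $U^{m-1}$ does hold, via the energy estimate \eqref{eq-weighted-bound} in the existence proof, but the citation is off). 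This is immaterial here since your second Cauchy--Schwarz factor vanishes identically for $R$ large, so the first factor's finiteness is not actually needed.
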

\begin{proof}
Let $\left\{\xi_{l}(x)\right\}_{l=1}^{\infty}\subset C^{\infty}(\R^n)$ be a sequence of functions such that $\xi_l(x)=1$ for $|x|\leq l-1$, $\xi_l(x)=0$ for $|x|\geq l$ and $0<\xi_l<1$ for $l-1<|x|<l$. Multiplying the first equation in \eqref{eq-for-each-population-u-i} by $\xi_l$ and integrating, we have
\begin{equation*}
\begin{aligned}
&\int_{\R^n}u(x,t)\xi_l(x)\,dx-\int_{\R^n}u_{0}(x,t)\xi_l(x)\,dx\\
&\qquad \qquad =\int_{0}^{t}\int_{\R^n}\left(u\right)_{\tau}\xi_l\,dxd\tau\\
&\qquad \qquad =-\int_0^{t}\int_{\R^n}m\,U^{m-1}(x,\tau)\left(\nabla u(x,\tau)\cdot\nabla\xi_l(x)\right)\,dxd\tau.
\end{aligned}
\end{equation*}
Then by Lemma \ref{lem-basic-space-where-a-weak-solution-belongs},
\begin{equation}\label{eq-absolute-value-of-difference-of-u-0-and-u-t-in-L-1}
\begin{aligned}
&\left|\int_{\R^n}u(x,t)\xi_l(x)\,dx-\int_{\R^n}u_{0}(x,t)\xi_l(x)\,dx\right|\\
&\qquad \qquad \leq m\left\|\nabla\xi_l\right\|_{L^{\infty}}\left(\int_0^{t}\int_{B_{l}\bs B_{l-1}}\left|U^{m-1}\nabla u\right|^2\,dxd\tau\right)^{\frac{1}{2}} \to 0 \qquad \mbox{as $l\to\infty$}
\end{aligned}
\end{equation}
and the lemma follows.
\end{proof}

On any compact subset of the region where $U>0$, the equation for $u$ becomes non-degenerate parabolic equation. Then by standard theory for non-degenerate parabolic equation \cite{LSU}, the solution  $u$ can be immediately positive on that region if the solution $u$ is strictly positive at a point of that region. As a consequence of this expectation, we can have the following equivalence between solutions $U$ and $u$.

\begin{lemma}\label{eq-same-support-between-U-and-u}
Let $m>1$ and $t_0\geq 0$. Let $U$ be the solution of \eqref{eq-PME-satisfied-by-U-total-species} with initial data $U_0\in L^1\left(\R^n\right)\cap L^{1+m}\left(\R^n\right)$ nonnegative and compactly supported. Suppose that $u\geq 0$ satisfies 
\begin{equation}\label{eq-for-solution-u-for-compact-support-comparing-with-U}
u_t=\nabla\left(U^{m-1}\nabla u\right) \qquad  \mbox{ in the distribution sense in $\R^n\times(t_0,\infty)$}
\end{equation} 
and \eqref{eq-basic-property-of-u-smaller-than-U}. Then 
\begin{equation}\label{eq-equal-of-support-after-some-times-flow}
\textbf{supp} \,U(t)=\textbf{supp}\,u(t) \qquad \forall t> t_0.
\end{equation}
\end{lemma}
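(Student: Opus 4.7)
The inclusion $\textbf{supp}\,u(t)\subseteq\textbf{supp}\,U(t)$ is immediate from the pointwise bound $0\leq u\leq U$ of \eqref{eq-basic-property-of-u-smaller-than-U}, so the real content of the lemma is the reverse inclusion. My plan is to pass to the ratio $\phi=u/U$, which by hypothesis lies in $[0,1]$ on the open set $\mathcal{P}:=\{(x,t)\in\R^n\times(t_0,\infty):U(x,t)>0\}$ (open by the continuity of $U$ from Lemma \ref{lem-cf-chapter-9-of-Va1}), and then apply the strong maximum principle for uniformly parabolic equations.

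A direct computation from \eqref{eq-for-each-population-u-i} and \eqref{eq-PME-satisfied-by-U-total-species}, using $U_{t}=\nabla\!\cdot\!(mU^{m-1}\nabla U)$ to cancel the $\phi\,U_{t}$ terms, shows that on $\mathcal{P}$
\[
\phi_{t}\;=\;\frac{1}{U}\,\nabla\!\cdot\!\bigl(mU^{m}\nabla\phi\bigr)\;+\;mU^{m-2}\nabla U\cdot\nabla\phi .
\]
On every compact $K\Subset\mathcal{P}$ one has $0<\delta_{K}\leq U\leq\Lambda_{K}$, so this equation is uniformly parabolic on $K$ with bounded measurable coefficients. The strong maximum principle (see Chapter III of \cite{LSU}) then produces the following trichotomy on each connected component $\omega$ of $\mathcal{P}$: either $\phi\equiv 0$, or $\phi\equiv 1$, or $0<\phi<1$. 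In the last two cases $u>0$ throughout $\omega$ and hence $\omega\subseteq\textbf{supp}\,u$, so it remains to rule out the possibility $\phi\equiv 0$ on some component.

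For this I would use mass conservation (Lemma \ref{lem-Mass-conservation-of-u}), which gives $\|u(\cdot,t)\|_{L^{1}}=\|u_{0}\|_{L^{1}}$ for every $t>t_{0}$; so, assuming $u$ is nontrivial, $u(\cdot,t)$ is positive on some subset of $\{U(\cdot,t)>0\}$ at every time. By the Barenblatt asymptotic (see \cite{LV}), the positivity set $\{U(\cdot,t)>0\}$ becomes the interior of a single Euclidean ball once $t$ is large enough; for such $t$ there is only one component of $\mathcal{P}$ available, it carries positive $L^{1}$-mass of $u$, and the trichotomy therefore forces $0<\phi\leq 1$ on it. A backward Harnack-chain argument inside $\mathcal{P}$ then propagates this nonvanishing down to all $t>t_{0}$, establishing $\textbf{supp}\,U(t)\subseteq\textbf{supp}\,u(t)$.

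The main obstacle is technical rather than conceptual: applying the strong maximum principle to $\phi$ along the entire component $\omega$ requires exhausting $\omega$ by sub-domains $\omega_{\sigma}:=\omega\cap\{U\geq\sigma\}$ and letting $\sigma\searrow 0$, because the ratio equation degenerates precisely at the free boundary $\partial\mathcal{P}$. One must check that the trichotomy persists under this exhaustion and that the Harnack chain used to transport positivity across $\omega$ does not break down as the cylinders approach $\partial\mathcal{P}$; this is the step where most of the care is needed.
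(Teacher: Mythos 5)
Your overall strategy (pass to the ratio $\phi=u/U$, derive a uniformly parabolic equation for it on $\{U>0\}$, then invoke the strong maximum principle) is a reasonable alternative route to the conclusion and differs genuinely from what the paper does. The paper's argument is purely local and forward in time: assuming the supports differ at $t_1$, it finds a boundary point $x_0$ of $\textbf{supp}\,u(t_1)$ that sits inside a small cylinder $B_{2r}(x_0)\times[t_1-2\epsilon_1,t_1]$ where $U$ is bounded away from zero, uses non-degenerate parabolic regularity to see $u(\cdot,t)\not\equiv 0$ on $B_r(x_0)$ slightly before $t_1$, constructs a Dirichlet solution $v^\tau$ on $B_r(x_0)\times(0,\tau)$ from that nontrivial data, and compares forward to get $u(x_0,t_1)\geq v^\tau(x_0,\tau)>0$, a contradiction. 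No ratio, no global asymptotics, no mass conservation.

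There is a concrete gap in your closing step. After establishing $\phi>0$ on the (connected) positivity set of $U$ for large $t$, you assert that "a backward Harnack-chain argument inside $\mathcal{P}$ then propagates this nonvanishing down to all $t>t_0$." Parabolic Harnack and the parabolic strong maximum principle only transport positivity \emph{forward} in time: positivity at $(x_1,t_1)$ implies positivity at nearby $(x_2,t_2)$ only for $t_2>t_1$. Knowing $u>0$ on the whole ball at a late time gives no control over the values of $u$ at earlier times, so the chain cannot be run in reverse. Related to this, the "trichotomy" you state on a space-time connected component $\omega$ of $\mathcal{P}$ overstates what the parabolic SMP actually gives: if $\phi$ hits $0$ at an interior point $(x_0,\bar t)\in\omega$, the SMP only forces $\phi\equiv 0$ on the portion of $\omega$ reachable from $(x_0,\bar t)$ by going backward in time, not on the entire component, so a component is not automatically in one of your three regimes. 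To repair the argument you would need a forward-in-time positivity-spreading step essentially equivalent to the paper's local comparison construction, at which point the detour through $\phi$, mass conservation, and Barenblatt asymptotics becomes unnecessary; you would also need to handle the case where $\{U(\cdot,t)>0\}$ is disconnected at small $t$, which the ratio argument as written does not address.
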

\begin{proof}
By \eqref{eq-basic-property-of-u-smaller-than-U}, we first have
\begin{equation*}
\textbf{supp}\,u(t)\subset \textbf{supp}\,U(t) \qquad \forall t\geq t_0.
\end{equation*}
We now suppose that \eqref{eq-equal-of-support-after-some-times-flow} fails for some $t_1>t_0$. Then, there exists a point $x_0\in\partial\,\textbf{supp}\,u(t_1)$ such that
\begin{equation}\label{eq-existence-of-cylinder-where-diff-coeff-U-strictly-positive}
B_{2r}\left(x_0\right)\subset \textbf{supp}\,U(s) \qquad \forall t_1-2\epsilon_1\leq s\leq t_1
\end{equation}
for sufficiently small $r>0$ and $0<\epsilon_1<\frac{1}{2}\left(t_1-t_0\right)$. By \eqref{eq-existence-of-cylinder-where-diff-coeff-U-strictly-positive},  the diffusion coefficients of \eqref{eq-for-solution-u-for-compact-support-comparing-with-U} is uniformly parabolic in $B_{2r}(x_0)\times[t_1-2\epsilon_1,t_1]$. Thus by standard theory for non-degenerate parabolic equation \cite{LSU}, the solution $u$ is continuous on $B_{r}(x_0)\times\left[t_1-\epsilon_1,t_1\right]$. This implies that
\begin{equation}\label{eq-property-of-u-not-equivalent-to-zero-near-x-0-t-1}
u\left(\cdot,t\right)\not\equiv 0 \qquad \mbox{on $B_{r}(x_0)$} \qquad \forall t\in\left[t_1-\epsilon_1,t_1\right]
\end{equation}
for sufficiently small $\epsilon_1>0$.\\
\indent For $0<\tau<\epsilon_1$, let $v_{0,\tau}(x)=u_i\left(x,t_1-\tau\right)\chi_{B_{r}(x_0)}$. Then by \eqref{eq-existence-of-cylinder-where-diff-coeff-U-strictly-positive}, there exists an unique solution $v^{\tau}$ of 
\begin{equation*}
\begin{cases}
\begin{aligned}
v_t(x,t)&=\nabla\left(m\,U^{m-1}(x,t+t_1-\tau)\nabla v(x,t)\right) \qquad \mbox{in $B_{r}(x_0)\times(0,\tau)$}\\
v(x,t)&=0 \qquad \qquad \qquad \qquad \qquad \qquad \qquad \mbox{on $\partial B_{r}(x_0)\times(0,\tau)$}\\
v(x,0)&=v_{0,\tau}(x) \qquad \qquad \qquad \qquad \qquad \qquad \mbox{in $B_{r}(x_0)$}.
\end{aligned}
\end{cases}
\end{equation*}
In addition, by \eqref{eq-property-of-u-not-equivalent-to-zero-near-x-0-t-1} and standard theory for non-degenerate parabolic equation \cite{LSU}, there exists a constant $c_1>0$ such that
\begin{equation}\label{eq-lower-bound-of-v-tau-at-t-1}
v^{\tau}(x,\tau)\geq c_1\qquad \forall x\in B_{\frac{r}{2}}(x_0).
\end{equation}
Since $u(x,t+t_1-\tau)$ is also a solution with initial data $u(x,t_1-\tau)$ which is bigger than $v_{0,\tau}(x)$ in $B_{r}(x_0)$, by \eqref{eq-lower-bound-of-v-tau-at-t-1} and the comparison principle we have
\begin{equation*}
u(x_0,t_1)\geq v^{\tau}(x_0,\tau)\geq c_1>0.
\end{equation*}
This contradicts the fact that $u(x_0,t_1)=0$. Hence \eqref{eq-equal-of-support-after-some-times-flow} holds for all $t\geq t_0$ and the lemma follows.
\end{proof}

\section{Local Continuity}
\setcounter{equation}{0}
\setcounter{thm}{0}

This section will be devoted to prove the local continuity of solution $u$ of \eqref{eq-standard-form-for-local-continuity-estimates-intro-1} under the  \textbf{Assumption I and II}. As an application, we also deal with the local H\"older continuity of solution of \eqref{eq-pme_system}.\\ Before  moving on to the main steps, we will give an example about the boundedness of  the function $U$ in diffusion coefficients of \eqref{eq-standard-form-for-local-continuity-estimates-intro-1}. 
\begin{lemma}\label{eq-example-boundedness-of-diffusion-coefficients-of-generaized-equation}
Let $1<m<\frac{n+2}{n-2}$ and let $U$ be a function such that
\begin{equation*}
U\in L^{\infty}\left(0,T;L^2\left(\R^n\right)\right) \qquad \mbox{and} \qquad U^m\in L^2\left(0,T;H^1\left(\R^n\right)\right).
\end{equation*}
Let measurable functions $\mathcal{A}$ and $\mathcal{B}$ be given by \eqref{eq-condition-1-for-measurable-functions-mathcal-A-and-mathcal-B}-\eqref{eq-condition-3-for-measurable-functions-mathcal-A-and-mathcal-B} for any constant $1\leq b<\frac{2n}{n-2}$. If $U$ is a subsolution of \eqref{eq-standard-form-for-local-continuity-estimates-intro-1}, i.e.,
\begin{equation}\label{eq-form-for-subsolution-local-continuity-estimates-section-3}
U_t\leq \nabla\cdot\left(mU^{m-1}\mathcal{A}\left(\nabla U,U,x,t\right)+\mathcal{B}\left(U,x,t\right)\right),
\end{equation}
then there exists a constant $C(T)>0$ such that
\begin{equation*}
\sup_{x\in\R^n}\left|U\left(x,T\right)\right|\leq C\left(T\right).
\end{equation*}
\end{lemma}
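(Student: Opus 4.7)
\smallskip
\noindent\textbf{Proof plan.} The plan is to prove the bound by a Moser iteration, implementing the smoothing effect familiar from the standard porous medium equation (cf.\ Chapter 9 of \cite{Va1}) in the perturbed setting of \eqref{eq-form-for-subsolution-local-continuity-estimates-section-3}. The starting integrability is $U\in L^\infty(0,T;L^2)$ together with $\nabla U^m\in L^2(0,T;L^2)$; all manipulations below are justified by first truncating $U_M=\min(U,M)$ and sending $M\to\infty$ at the end.

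First, for each $p\ge 2$ I would test the subsolution inequality \eqref{eq-form-for-subsolution-local-continuity-estimates-section-3} against $U^{p-1}\zeta(t)$ with a nonnegative temporal cutoff $\zeta$ equal to $1$ on $[T/2,T]$ and vanishing near $t=0$, integrate by parts in space, and use the three structural conditions \eqref{eq-condition-1-for-measurable-functions-mathcal-A-and-mathcal-B}--\eqref{eq-condition-3-for-measurable-functions-mathcal-A-and-mathcal-B}. After absorbing the cross term produced by $\mathcal{B}$ into the coercive gradient term via Young's inequality, the result is an energy inequality of the form
\begin{equation*}
\frac{1}{p}\frac{d}{dt}\int_{\R^n}U^p\,dx\,+\,c(p)\int_{\R^n}\bigl|\nabla U^{(p+m-1)/2}\bigr|^2\,dx\,\le\,C(p)\int_{\R^n}\bigl(U^{p+m-1}f_1+U^{p+2b-m-1}f_3^{\,2}\bigr)\,dx,
\end{equation*}
with the right-hand side controlled pointwise in time by $\|f_1\|_\infty$ and $\|f_3\|_\infty^{\,2}$.

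Next, setting $w=U^{(p+m-1)/2}$ and applying the Gagliardo--Nirenberg--Sobolev inequality $\|w\|_{L^{2^*}}^{2}\le C\|\nabla w\|_{L^2}^{2}$ (with $2^*=\tfrac{2n}{n-2}$) converts the gradient bound into an $L^{q}$ bound on $U$ with $q=\tfrac{n}{n-2}(p+m-1)$. Integrating in $t$ and combining with the time-derivative term gives the parabolic estimate
\begin{equation*}
\sup_{t\in[T_{k+1},T]}\|U(\cdot,t)\|_{L^{p_k}}^{p_k}\,+\,\int_{T_{k+1}}^{T}\!\!\int_{\R^n}U^{p_{k+1}}\,dx\,dt\,\le\,\Gamma_k\!\!\int_{T_k}^{T}\!\!\int_{\R^n}\bigl(U^{p_k}+U^{p_k+m-1}+U^{p_k+2b-m-1}\bigr)\,dx\,dt,
\end{equation*}
after an interpolation step, which by H\"older between $L^{p_k}$ and $L^{p_{k+1}}$ becomes a recursion $Y_{k+1}\le (A\,b^{k})^{1/p_k}\,Y_k^{\,\theta_k}$ on $Y_k:=\|U\|_{L^\infty([T_k,T];L^{p_k})\cap L^{p_{k+1}}([T_k,T]\times\R^n)}$, starting from $Y_0$ controlled by $\|U\|_{L^\infty(0,T;L^2)}$ and the geometric times $T_k\nearrow T/2$.

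The main obstacle, and the reason for the exponent thresholds $m<\tfrac{n+2}{n-2}$ and $b<\tfrac{2n}{n-2}$, lies in closing this iteration: both troublesome right-hand side exponents $p_k+m-1$ and $p_k+2b-m-1$ must remain strictly less than the Sobolev exponent $p_{k+1}=\tfrac{n}{n-2}(p_k+m-1)$ uniformly in $k$, so that the H\"older interpolation yields an exponent $\theta_k<1$ with a controllable gap. The two stated inequalities provide precisely this gap for large $p_k$. Once the constants $\Gamma_k$ are shown to grow only geometrically, the standard Moser lemma (iterative scheme) gives $Y_k\to\sup_{x\in\R^n}|U(x,T)|<\infty$ as $k\to\infty$, with a bound that depends on $T$ (blowing up as $T\to0$) as is typical for smoothing estimates.
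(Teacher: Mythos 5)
Your proposal runs a Moser iteration on powers $U^{p-1}$, whereas the paper's proof is a De Giorgi level-set truncation in the spirit of Caffarelli--Vasseur \cite{CV}: one fixes the exponent $p=2$ throughout, introduces truncation levels $L_j=M(1-2^{-j})$ and truncated functions $U_j=(U-L_j)_+$ with staggered starting times $T_j=t_0(1-2^{-j})$, derives the energy estimate for $U_j$ from testing the subsolution inequality against $U_j$, interpolates the nonlinear right-hand side between $L^2$ and $L^{2n/(n-2)}$ (this is where $m<\frac{n+2}{n-2}$ and the bound on $b$ enter), and closes via the superlinear recursion $A_j\le C\,4^{\frac{n+2}{n}j}M^{-4/n}A_{j-1}^{(n+2)/n}$, choosing $M$ large so that $A_j\to 0$. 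Your approach instead iterates the integrability exponent $p_k$ with a sublinear recursion $Y_{k+1}\le (A b^k)^{1/p_k}Y_k^{\theta_k}$. Both are legitimate; the paper's route has the advantage of producing the threshold constant $M$ explicitly and keeping the fixed exponent $2$, which is convenient given the limited a priori integrability $U\in L^\infty(0,T;L^2)$, $U^m\in L^2(H^1)$.

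One point to tighten in your plan: you attribute the exponent restrictions $m<\frac{n+2}{n-2}$ and $b<\frac{2n}{n-2}$ to the requirement that $p_k+m-1$ and $p_k+2b-m-1$ stay below $p_{k+1}=\frac{n}{n-2}(p_k+m-1)$ \emph{for large $p_k$}. For large $p_k$ these inequalities are automatic, since $p_{k+1}/p_k\to\frac{n}{n-2}>1$. The thresholds actually bite at the \emph{base} of the iteration ($p_0=2$), where one only has $U\in L^\infty(L^2)$ and the Sobolev gain $U^{(m+1)/2}\in L^2(L^{2n/(n-2)})$: there one must be able to control $\int U^{m+1}$ and $\int U^{2b-m+1}$ (or $\int U^{2b}$ in the paper's normalization) by interpolation between $L^2$ and $L^{2n/(n-2)}$, which is what forces $m+1\lesssim\frac{2n}{n-2}$ and the corresponding bound on $b$. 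Indeed for $m$ near $1$ and $b$ near its upper limit the exponent $2b-m+1$ can exceed $p_1=\frac{n(m+1)}{n-2}$, so the base step needs the interpolation against the $L^2$ bound, exactly as the paper does in its inequality (3.8); without that, your recursion does not start.
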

\begin{proof}
We will use a modification of the proof of Theorem 1 of \cite{CV}. Let 
\begin{equation*}
L_j=M\left(1-\frac{1}{2^{j}}\right) \qquad \mbox{and} \qquad U_j=\left(U-L_j\right)_+
\end{equation*}
for a constant $M$ which will be determined later. Multiplying \eqref{eq-form-for-subsolution-local-continuity-estimates-section-3} by $U_j$ and integrating it over $\R^n$, we have
\begin{equation}\label{eq-energy-type-inequality-for-boundedness-of-U}
\begin{aligned}
\frac{\partial}{\partial t}\left[\int_{\R^n}U_j^2\,dx\right]+\int_{\R^n}\left|\nabla U_j\right|^2\,dx&\leq \frac{\partial}{\partial t}\left[\int_{\R^n}U_j^2\,dx\right]+\left(mC_1\left(\frac{M}{2}\right)^{m-1}-1\right)\int_{\R^n}\left|\nabla U_j\right|^2\,dx\\
&\leq \int_{\R^n}\left(U^{m+1}f_1+U^{2b}f_3^2\right)\chi_{_{\left\{U_j\geq 0\right\}}}\,dx\\
&\leq \left(\left\|f_1\right\|_{L^{\infty}}+\left\|f_3\right\|^2_{L^{\infty}}\right)\int_{\R^n}\left(U^{m+1}+U^{2b}\right)\chi_{_{\left\{U_j\geq 0\right\}}}\,dx
\end{aligned}
\end{equation}
for any $M\geq\frac{2^{\frac{m}{m-1}}}{\left(mC_1\right)^{\frac{1}{m-1}}}$. For fixed $t_0>0$, let $T_j=t_0\left(1-\frac{1}{2^j}\right)$ and 
\begin{equation*}
A_j=\sup_{t\geq T_j}\left(\int_{\R^n}U_j^2\,dx\right)+\frac{1}{2}\int_{T_j}^{\infty}\int_{\R^n}\left|\nabla U_j\right|^2\,dx.
\end{equation*}
Integrating \eqref{eq-energy-type-inequality-for-boundedness-of-U} over $\left(s,t\right)$ and $\left(s,\infty\right)$, $\left(T_{j-1}<s<T_j,\,\,t>T_j\right)$, we have
\begin{equation}\label{eq-inequality-with-A-j-1}
A_j+\frac{1}{2}\int_{T_j}^{\infty}\int_{\R^n}\left|\nabla U_j\right|^2\,dxdt\leq \int_{\R^n}U_j^2\left(x,s\right)\,dx+C_1\int_{T_{j-1}}^{\infty}\int_{\R^n}\left(U^{m+1}+U^{2b}\right)\chi_{_{\left\{U_j\geq 0\right\}}}\,dxdt
\end{equation}
for some constant $C_1>0$. By mean value theorem for integration, we find
\begin{equation}\label{eq-inequality-with-A-j-2}
A_j+\frac{1}{2}\int_{T_j}^{\infty}\int_{\R^n}\left|\nabla U_j\right|^2\,dxdt\leq \frac{2^{j}}{t_0}\int_{T_{j-1}}^{\infty}\int_{\R^n}U_j^2\,dxdt+C_1\int_{T_{j-1}}^{\infty}\int_{\R^n}\left(U^{m+1}+U^{2b}\right)\chi_{_{\left\{U_j\geq 0\right\}}}\,dxdt
\end{equation}
Observe that 
\begin{equation}\label{eq-lower-bound-of-U-j-1-by-M-over-2-j-on-U-j}
U_{j-1}\geq \frac{M}{2^j}\qquad \mbox{ if $U_j>0$.}
\end{equation}
Then
\begin{equation}\label{eq-replacing-U-by-U-sub-j-with-2-to-i}
U=U_{j-1}+L_{j-1}\leq U_{j-1}+M\qquad \Rightarrow \qquad 2^{j+1}U_{j-1}\geq U\qquad \mbox{ if $U_j>0$.}
\end{equation}
By \eqref{eq-inequality-with-A-j-2} and \eqref{eq-replacing-U-by-U-sub-j-with-2-to-i},
\begin{equation}\label{eq-inequality-with-A-j-3}
A_j+\frac{1}{2}\int_{T_j}^{\infty}\int_{\R^n}\left|\nabla U_j\right|^2\,dxdt\leq \frac{2^{j}}{t_0}\int_{T_{j-1}}^{\infty}\int_{\R^n}U_j^2\,dxdt+C_14^{j}\int_{T_{j-1}}^{\infty}\int_{\R^n}\left(U_{j-1}^{m+1}+U_{j-1}^{2b}\right)\chi_{_{\left\{U_j\geq 0\right\}}}\,dxdt.
\end{equation}
By Sobolev and Interpolation inequalities of $L^p$-space, there exist constant $C_2$ and $C_3>0$ such that
\begin{equation}\label{eq-Sobolev-and-Interpolation-inequalities-for-replacign-U-by-U-j}
\begin{aligned}
C_1\int_{\R^n}\left(U_{j-1}^{m+1}+U_{j-1}^{2b}\right)\chi_{_{\left\{U_j\geq 0\right\}}}\,dxdt&\leq C_2\left(\int_{T_j}^{\infty}\int_{\R^n}U_{j-1}^{\frac{2n}{n-2}}\chi_{_{\left\{U_j\geq 0\right\}}}\,dxdt\right)^{\frac{n-2}{n}}+C_3\int_{T_j}^{\infty}\int_{\R^n}U_{j-1}^2\,dxdt\\
&\leq \frac{1}{2}\int_{T_j}^{\infty}\int_{\R^n}\left|\nabla U_{j}\right|^2\,dxdt+C_3\int_{T_j}^{\infty}\int_{\R^n}U_{j-1}^2\,dxdt.
\end{aligned}
\end{equation}
By \eqref{eq-inequality-with-A-j-3} and \eqref{eq-Sobolev-and-Interpolation-inequalities-for-replacign-U-by-U-j}, 
\begin{equation}\label{eq-inequality-with-A-j-4}
A_j\leq C_44^j\int_{T_{j-1}}^{\infty}\int_{\R^n}U_{j-1}^2\chi_{_{\left\{U_j\geq 0\right\}}}\,dxdt
\end{equation}
for some constant $C_4=C_4\left(t_0\right)>0$. By \eqref{eq-lower-bound-of-U-j-1-by-M-over-2-j-on-U-j}, we also have
\begin{equation}\label{eq-upper-bound-of-1-on-U-j-positive-3}
\chi_{_{\left\{U_j\geq 0\right\}}}\leq \left(\frac{2^j}{M}U_{j-1}\right)^{\frac{4}{n}}
\end{equation}
By \eqref{eq-inequality-with-A-j-4}, \eqref{eq-upper-bound-of-1-on-U-j-positive-3}, Sobolev and H\"older inequalities,
\begin{equation*}
\begin{aligned}
A_j\leq C_4\frac{4^{\frac{n+2}{n}j}}{M^{\frac{4}{n}}}\int_{T_{j-1}}^{\infty}\int_{\R^n}U_{j-1}^{2\left(\frac{n+2}{n}\right)}\,dxdt\leq C_5\frac{4^{\frac{n+2}{n}j}}{M^{\frac{4}{n}}}A_{j-1}^{\frac{n+2}{n}}
\end{aligned}
\end{equation*}
for some constant $C_5=C_5\left(t_0\right)>0$. If we choose the constant $M>2$ sufficiently large that
\begin{equation*}
A_1\leq \left(\frac{C_5}{M^{\frac{4}{n}}}\right)^{-\frac{n}{2}}\left(4^{\frac{n+2}{n}}\right)^{-\left(\frac{n}{2}\right)^2},
\end{equation*}
then $A_j\to 0$ as $j\to\infty$, i.e.,
\begin{equation*}
\sup_{x\in\R^n}\left|U\left(x,t_0\right)\right|\leq M=M\left(t_0\right) 
\end{equation*} 
and the lemma follows.
\end{proof}

For the local continuity of solution of \eqref{eq-standard-form-for-local-continuity-estimates-intro-1}, we start by stating well-known result, Sobolev-type inequality.

\begin{lemma}[cf. Lemma 3.1 of \cite{KL1}]\label{eq-Sobolev-Type-Inequality}
Let $\eta(x,t)$ be a cut-off function compactly supported in $B_r$ and let $u$ be a function defined in $\R^n\times(t_1,t_2)$ for any $t_2>t_1>0$. Then $u$ satisfies the following Sobolev inequalities:
\begin{equation}\label{eq-first-sobolev-inequality-1}
\left\|\eta u\right\|_{L^{\frac{2n}{n-2}}(\R^n)}\leq C\left\|\nabla (\eta u)\right\|_{L^2(\R^n)}
\end{equation}
and
\begin{equation}\label{eq-weighted-sobolev-inequality-for-holder}
\begin{aligned}
\|\eta\, u\|^2_{L^2(t_1,t_2;L^{2}(\R^n))}\leq C\Big(\sup_{t_1\leq t\leq t_2}\|\eta\, u\|^2_{L^{2}(\R^n)}+\|\D
(\eta\, u)\|^2_{L^2(t_1,t_2;L^2(\R^n))}\Big)\,
|\{\eta\, u>0\}|^{\frac{2}{n+2}}
\end{aligned}
\end{equation}
for some $C>0$.
\end{lemma}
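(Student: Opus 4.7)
The plan is to treat $v=\eta u$, which has compact spatial support in $B_r$, and to derive both inequalities from classical Sobolev embedding combined with an interpolation/H\"older argument in the parabolic variables. Inequality \eqref{eq-first-sobolev-inequality-1} is essentially immediate: since $\eta u \in W^{1,2}_0(B_r)$ for a.e.\ $t$, the classical Sobolev embedding in $\R^n$ (with $n\ge 3$, or a Morrey-type substitute in low dimensions) gives $\|\eta u\|_{L^{2n/(n-2)}(\R^n)} \leq C\|\nabla(\eta u)\|_{L^2(\R^n)}$. No truncation or extension is needed because $\eta u$ vanishes outside $B_r$.

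For the weighted parabolic Sobolev estimate \eqref{eq-weighted-sobolev-inequality-for-holder}, I would interpolate the intermediate Lebesgue exponent $q=\frac{2(n+2)}{n}$ between $L^2$ and $L^{2n/(n-2)}$ on each time slice. A direct check of the interpolation identity yields
\begin{equation*}
\|\eta u(\cdot,t)\|_{L^{q}(\R^n)} \le \|\eta u(\cdot,t)\|_{L^2(\R^n)}^{\frac{2}{n+2}} \|\eta u(\cdot,t)\|_{L^{2n/(n-2)}(\R^n)}^{\frac{n}{n+2}}.
\end{equation*}
Raising to the power $q$, integrating over $t\in(t_1,t_2)$, and inserting \eqref{eq-first-sobolev-inequality-1}, I obtain
\begin{equation*}
\int_{t_1}^{t_2}\!\|\eta u\|_{L^{q}}^{q}\,dt \;\le\; C\sup_{t_1\le t\le t_2}\|\eta u\|_{L^2}^{\frac{4}{n}}\int_{t_1}^{t_2}\!\|\nabla(\eta u)\|_{L^2}^2\,dt.
\end{equation*}
Taking the $n/(n+2)$-th power and applying Young's inequality with conjugate exponents $\frac{n+2}{2}$ and $\frac{n+2}{n}$ (whose reciprocals sum to $1$) converts the product into a sum and yields
\begin{equation*}
\|\eta u\|_{L^{q}(\R^n\times(t_1,t_2))}^{2} \;\le\; C\Bigl(\sup_{t_1\le t\le t_2}\|\eta u\|_{L^2(\R^n)}^{2}+\|\nabla(\eta u)\|_{L^2(t_1,t_2;L^2(\R^n))}^{2}\Bigr).
\end{equation*}

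To finish, I would use the trivial identity $\eta u = (\eta u)\,\chi_{\{\eta u>0\}}$ together with H\"older's inequality in space-time with exponents $\frac{q}{2}=\frac{n+2}{n}$ and $\frac{n+2}{2}$:
\begin{equation*}
\|\eta u\|_{L^2(t_1,t_2;L^2(\R^n))}^{2} \;\le\; \|\eta u\|_{L^{q}(\R^n\times(t_1,t_2))}^{2}\,\bigl|\{\eta u>0\}\bigr|^{\frac{2}{n+2}}.
\end{equation*}
Combining the last two displays gives \eqref{eq-weighted-sobolev-inequality-for-holder}. The only mildly delicate point is the arithmetic of the interpolation exponents and the matching of the Young's inequality weights to $\frac{2}{n+2}+\frac{n}{n+2}=1$; otherwise the argument is standard DiBenedetto-type parabolic Sobolev embedding and presents no real obstacle.
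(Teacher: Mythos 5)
Your proof is correct. The paper itself does not give a proof of this lemma; it simply cites Lemma~3.1 of \cite{KL1}, and the chain you assemble — spatial Sobolev embedding for the compactly supported $\eta u$, Lebesgue interpolation at the critical parabolic exponent $q=\tfrac{2(n+2)}{n}$, raising to the $q$-th power and integrating in $t$ so the $L^{2n/(n-2)}$ factor can be replaced using \eqref{eq-first-sobolev-inequality-1}, Young's inequality with exponents $\tfrac{n+2}{2}$ and $\tfrac{n+2}{n}$, and a final space–time H\"older against $\chi_{\{\eta u>0\}}$ with $1-\tfrac{2}{q}=\tfrac{2}{n+2}$ — is precisely the standard DiBenedetto parabolic embedding argument that reference uses. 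Your exponent bookkeeping (in particular $2q/(n+2)=4/n$ and $nq/(n+2)=2$) checks out. The only caveat worth flagging, which you already noted in passing, is that \eqref{eq-first-sobolev-inequality-1} as written requires $n\ge 3$; the estimate holds for $n\le 2$ too with the usual modification of the Sobolev step, and that is implicitly assumed in the paper.
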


From now on, we are going to focus on oscillation argument. To apply it to our case, we use a modification of the technique introduced in \cite{Di}, \cite{KL1}, \cite{HU}. \\
\indent Choose a point $(x_0,t_0)\in\R^n\times(0,\infty)$ and a constant $R_0>0$ such that
\begin{equation*}
\left(x_0,t_0\right)+Q\left(R_0,R_0^{2-\epsilon}\right)\subset \R^n\times(0,\infty).
\end{equation*} 
where $\epsilon>0$ is a small number which is determined by \eqref{eq-condition-of-epsilon-for-holder-continuity-at-start}.
After translation, we may assume without loss of generality that 
\begin{equation*}
(x_0,t_0)=(0,0). 
\end{equation*}
Set
\begin{equation*}
\mu^+=ess\sup_{Q(R_0,R_0^{2-\epsilon})}u, \qquad \mu^-=ess\inf_{Q(R_0,R_0^{2-\epsilon})}u, \qquad \omega=\osc_{Q(R_0,R_0^{2-\epsilon})} u=\mu^+-\mu^-.
\end{equation*}
By the \textbf{Assumption I}, the equation \eqref{eq-standard-form-for-local-continuity-estimates-intro-1} is non-degenerate on the region where $u>0$. Thus if $\mu^->0$, then the equation is uniformly parabolic in $Q\left(R_0,R_0^{2-\epsilon}\right)$. By standard regularity theory for the parabolic equation \cite{LSU}, the local H\"older continuity follows. Hence from now on, we assume that 
\begin{equation*}
\mu^-=0.
\end{equation*}
If $\mu^+=0$, then
\begin{equation*}
u\equiv 0 \qquad \mbox{on $Q\left(R_0,R_0^{2-\epsilon}\right)$}.
\end{equation*}
This immediately implies the local H\"older continuity of solution $u$. Hence we also assume that
\begin{equation*}
\omega=\mu^+>0.
\end{equation*}
Construct the cylinder 
\begin{equation}\label{eq-construction-of-cylinder-with-some-constant-A-which-is-bigger}
Q\left(R,\theta_0^{-\alpha_0}R^2\right)=B_{R}\times\left(-\theta_0^{-\alpha_0}R^2,0\right) \qquad \left(\theta_0=\frac{\omega}{4},\,\,\alpha_0=\beta(m-1)\right)
\end{equation}
where $\beta$ is given by \eqref{eq-assumption-through-the-regularity-section-0}. If $U$ is uniformly parabolic, then the constant $\beta$ is zero. Thus the scaled parabolic cylinder $Q\left(R,\theta_0^{-\alpha_{0}}R^2\right)$ is equivalent to the standard cylinder $Q(R,R^2)$ with homogeneous of degree one. Therefore De Giorgi and Moser's technique \cite{De}, \cite{Mo} on regularity  theory for uniformly elliptic and parabolic  PDE's are enough to show the local continuity of solution $u$ of  \eqref{eq-standard-form-for-local-continuity-estimates-intro-1}. Otherwise, $\theta_0^{\alpha_{0}}$ depends on the size of oscillation $\omega$. Since it depends on the value of solution $u$ at each $(x,t)\in\R^n\times\R^+$,  we will use the intrinsic scaling technique to overcome the difficulties on local continuity stem from the relation between $u$ and $U$.\\
\indent We will assume that the radius $0<R<R_0$ is sufficiently small that
\begin{equation}\label{eq-condition-between-R-and-theta-alpha--1}
\theta_0^{\alpha_{0}}>R^{\epsilon}.
\end{equation}
By \eqref{eq-construction-of-cylinder-with-some-constant-A-which-is-bigger} and \eqref{eq-condition-between-R-and-theta-alpha--1}, 
\begin{equation*}
Q\left(R,\theta_0^{-\alpha_{0}}R^2\right)\subset Q\left(R,R^{2-\epsilon}\right)\subset Q\left(R_0,R_0^{2-\epsilon}\right)
\end{equation*}
and
\begin{equation*}
\osc_{Q\left(R,\theta_0^{-\alpha_{0}}R^2\right)}u\leq \omega=4\theta.
\end{equation*}
\indent To take care of the regularity problem in $u_t$, we introduce the Lebesgue-Steklov average $u_h$ of the weak solution $u$, for $h>0$:
\begin{equation*}
u_h(\cdot,t)=\frac{1}{h}\int_{t}^{t+h}u(\cdot,\tau)\,d\tau.
\end{equation*}
$u_h$ is well-defined and it converges to $u$ as $h\to 0$ in $L^{p}$ for all $p\geq 1$.  In addition, it is differentiable in time for all $h>0$ and its derivative is 
\begin{equation*}
\frac{u(t+h)-u(t)}{h}.
\end{equation*}
\indent Fix $t\in(0,T)$ and let $h$ be a small positive number such that $0<t<t+h<T$. Then for every compact subset $\mathcal{K}\subset\R^n$ the following formulation is equivalent to \eqref{eq-standard-form-for-local-continuity-estimates-intro-1}.
\begin{equation}\label{eq-formulation-for-weak-solution-of-u-h}
\int_{\mathcal{K}\times\{t\}}\left[\left(u_h\right)_t\vp+m\left(U^{m-1}\mathcal{A}\left(\nabla u,u,x,t\right)\right)_h\nabla\vp+\left(\mathcal{B}\left(u,x,t\right)\right)_h\nabla\vp\right]\,dx=0, \qquad \forall 0<t<T-h
\end{equation}
for any $\vp\in H^1_0\left(\mathcal{K}\right)$.
\subsection{The First Alternative}
We now start by stating the first alternative.
\begin{lemma}\label{lem-the-first-alternative-for-holder-estimates}
There exists a positive number $\rho_0$ depending on $\frac{\Lambda}{\theta_0^{\,\beta}}$ such that if
\begin{equation}\label{eq-first-condition-of-small-region-of-lower-for-holder-estimates}
\left|\left\{\left(x,t\right)\in Q\left(R,\theta_0^{-\alpha_{0}}R^2\right):u(x,t)<\frac{\omega}{2}\right\}\right|\leq\rho_0\left|Q\left(R,\theta_0^{-\alpha_{0}}R^2\right)\right|
\end{equation}
then,
\begin{equation}\label{eq-conclusion-of-first-alternatives-half-of-difference}
u(x,t)>\frac{\omega}{4} \qquad \mbox{for all $(x,t)\in Q\left(\frac{R}{2},\theta_0^{-\alpha_{0}}\left(\frac{R}{2}\right)^2\right)$}.
\end{equation}
\end{lemma}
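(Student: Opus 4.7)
The plan is a DeGiorgi-type iteration on the intrinsically scaled cylinder $Q(R,\theta_0^{-\alpha_0}R^2)$. I would introduce the decreasing truncation levels $k_n = \omega/4+\omega/2^{n+2}$ (so $k_0=\omega/2$ and $k_n\downarrow \omega/4$), the shrinking sub-cylinders $Q_n = B_{R_n}\times(-\theta_0^{-\alpha_0}R_n^2,\,0)$ with $R_n = R/2+R/2^{n+1}$, and cutoffs $\xi_n \in C_c^\infty(Q_n)$ with $\xi_n \equiv 1$ on $Q_{n+1}$, $\|\nabla\xi_n\|_\infty \leq C 2^n/R$, $\|(\xi_n)_t\|_\infty \leq C 4^n \theta_0^{\alpha_0}/R^2$. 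Setting $A_n = \{(x,t)\in Q_n : u(x,t)<k_n\}$, the whole scheme aims to show $Y_n := |A_n|/|Q_n| \to 0$, which forces $u\geq \omega/4$ a.e.\ on the cylinder of the statement.

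The core technical step is a Caccioppoli inequality. I would test the Steklov-averaged weak form \eqref{eq-formulation-for-weak-solution-of-u-h} with $\varphi = -(u_h - k_n)_-\xi_n^2 = (k_n - u_h)_+\xi_n^2$, let $h\to 0^+$, and apply \eqref{eq-condition-1-for-measurable-functions-mathcal-A-and-mathcal-B}--\eqref{eq-condition-3-for-measurable-functions-mathcal-A-and-mathcal-B}, Young's inequality, and \textbf{Assumption II} to absorb the $U^{m-1}\nabla\xi_n$ boundary contributions via $U\leq \Lambda$, obtaining
\begin{align*}
\sup_{-\theta_0^{-\alpha_0}R_{n+1}^2 \leq \tau \leq 0}\int_{B_{R_{n+1}}}(k_n - u)_+^2(x,\tau)\,dx &+ \iint_{Q_{n+1}}U^{m-1}\bigl|\nabla(k_n-u)_+\bigr|^2\,dxdt \\
&\leq C\frac{4^n}{R^2}\bigl(\Lambda^{m-1}+\theta_0^{\alpha_0}\bigr)\iint_{Q_n}(k_n-u)_+^2\,dxdt + C\omega^2\bigl(1+\omega^{2b-2}\bigr)|A_n|.
\end{align*}
The intrinsic time-height $\theta_0^{-\alpha_0}$ is chosen exactly so that $\theta_0^{\alpha_0}/R^2$ and $\Lambda^{m-1}/R^2$ balance homogeneously in $\omega$ once one uses \textbf{Assumption I} to compare $U^{m-1}$ with $u^{\beta(m-1)}$ on the portion of $A_n$ where $u$ is comparable to $k_n \geq \omega/4$.

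Feeding this into the weighted Sobolev estimate \eqref{eq-weighted-sobolev-inequality-for-holder} applied to $v_n = (k_n-u)_+\xi_n$, together with the pointwise bound $(k_n-u)_+\leq \omega/2$, yields
\[
\iint_{Q_{n+1}}(k_n-u)_+^2\,dxdt \leq C\,\omega^2\Bigl(\tfrac{2^n}{R}\Bigr)^2 |A_n|^{1+2/(n+2)}.
\]
On $A_{n+1}$ we have $(k_n - u)_+ \geq k_n - k_{n+1} = \omega/2^{n+3}$, so dividing by $(\omega/2^{n+3})^2$ converts this into a recursion $Y_{n+1}\leq C b^n Y_n^{1+\sigma}$ with $\sigma = 2/(n+2)$ and $b>1$, all constants depending only on $m,n,\beta,\lambda,\Lambda,\|f_i\|_\infty$. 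The standard fast-geometric-convergence lemma then forces $Y_n\to 0$ provided $Y_0 = |A_0|/|Q_0| \leq \rho_0 := C^{-1/\sigma}b^{-1/\sigma^2}$, which is precisely the smallness hypothesis \eqref{eq-first-condition-of-small-region-of-lower-for-holder-estimates}, and the conclusion \eqref{eq-conclusion-of-first-alternatives-half-of-difference} follows.

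The main difficulty is the Caccioppoli step: producing constants that depend only on $\Lambda/\theta_0^\beta$ (as the statement promises for $\rho_0$), rather than on the degenerate weight $U^{m-1}$ on the bad set $A_n$ where $u$ may be small. The bound $U^{m-1}\geq \lambda^{m-1}u^{\beta(m-1)}$ from \textbf{Assumption I} is weakest precisely where one needs it, so one must exploit the intrinsic scale $\theta_0^{-\alpha_0}=(\omega/4)^{-\beta(m-1)}$, which was designed to match the degeneracy, so that the two terms on the right of the Caccioppoli inequality share the same power of $\omega$. The lower-order contributions from $f_1,f_2,f_3$ and the passage $u_h\to u$ must also be tracked carefully — in particular the $u^bf_3$ term when $b>1$ is handled by combining $u\leq \omega$ with \textbf{Assumption II}, so that the recursion constant ultimately depends only on $\Lambda/\theta_0^\beta$ as claimed.
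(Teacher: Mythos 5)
Your overall scaffolding (truncation levels $k_n\downarrow\omega/4$, shrinking cylinders $Q_n$, the recursion $Y_{n+1}\leq Cb^nY_n^{1+\sigma}$, and the fast-convergence lemma) matches the paper's DeGiorgi strategy, and your decision to bound the forcing terms crudely via $\|f_i\|_\infty$ rather than through the $L^{q_1}$-$L^{q_2}$ machinery is a defensible simplification of the paper's two-sequence recursion $(X_i,Y_i)$. However, there is a genuine gap in the Caccioppoli-to-Sobolev step that you acknowledge as "the main difficulty" but do not actually resolve.

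Your Caccioppoli inequality has the \emph{weighted} energy term $\iint_{Q_{n+1}}U^{m-1}\bigl|\nabla(k_n-u)_+\bigr|^2$ on the left, yet the Sobolev estimate \eqref{eq-weighted-sobolev-inequality-for-holder} requires the \emph{unweighted} quantity $\iint\bigl|\nabla\bigl((k_n-u)_+\xi_n\bigr)\bigr|^2$. To pass from one to the other you need a lower bound for $U^{m-1}$ on the set $\{u<k_n\}$ where $\nabla(k_n-u)_+$ is active, but on that set \textbf{Assumption~I} gives $U^{m-1}\geq\lambda^{m-1}u^{\beta(m-1)}$, which degenerates exactly where $u$ is small — and $u$ may well vanish there. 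Your phrase "on the portion of $A_n$ where $u$ is comparable to $k_n\geq\omega/4$" does not cover the bad part of $A_n$ where $u<\omega/4$, and the intrinsic scale $\theta_0^{-\alpha_0}$ only rebalances the time-derivative term, not the gradient weight. The paper resolves this by testing not with truncations of $u$ but with truncations of the modified function $u_\omega=\max\{u,\omega/4\}$ (the Henriques--Urbano device, \cite{HU}). Since $l_i>\omega/4$, one has $(u_\omega-l_i)_-\leq(u-l_i)_-$ pointwise, and $\nabla(u_\omega-l_i)_-$ is supported in $\{\omega/4\leq u<l_i\}$, where \textbf{Assumption~I} gives $U^{m-1}\geq\lambda^{m-1}\theta_0^{\alpha_0}$; this is what lets one pull the fixed factor $\theta_0^{\alpha_0}$ out in front of an \emph{unweighted} gradient energy in \eqref{eq-simplifying-of-eq-after-h-to-zero}, so that the change of variables $z=\theta_0^{\alpha_0}t$ and the unweighted Sobolev inequality close the loop. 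Without this substitution your iteration cannot start, so the proposal as written does not constitute a proof.
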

\begin{proof}
For $i\in\N$, we set
\begin{equation*}
R_i=\frac{R}{2}+\frac{R}{2^{i}} \qquad \mbox{and} \qquad l_i=\mu_-+\left(\frac{\omega}{4}+\frac{\omega}{2^{i+1}}\right)=\frac{\omega}{4}+\frac{\omega}{2^{i+1}}.
\end{equation*}
Consider a cut-off function $\eta_i(x,t)\in C^{\infty}\left(\R^n\times\R\right)$ such that
\begin{equation*}
\begin{cases}
\begin{array}{cccl}
0\leq \eta_i\leq 1 &&& \mbox{in $Q\left(R_i,\theta_0^{-\alpha_{0}}R_i^2\right)$}\\
\eta_i=1 &&& \mbox{in$Q\left(R_{i+1},\theta_0^{-\alpha_{0}}R_{i+1}^2\right)$ }\\
\eta_i=0 &&& \mbox{on the parabolic boundary of $Q\left(R_i,\theta_0^{-\alpha_{0}}R_i^2\right)$}\\
\left|\nabla\eta_i\right|\leq \frac{2^{i+1}}{R_i},\,\,\left|\left(\eta_i\right)_t\right|\leq \frac{2^{2(i+1)}\theta_0^{\alpha_0}}{R_i^2}&&& \mbox{in $Q\left(R_i,\theta_0^{-\alpha_0}R_i^2\right)$}
\end{array}
\end{cases}
\end{equation*}
In the weak formulation \eqref{eq-formulation-for-weak-solution-of-u-h}, we take $\vp=\left(u_h-l_i\right)_-\eta_i^2$ and integrate over $\left(-\theta_0^{-\alpha_{0}}R_i^2,t\right)$ for $t\in\left(-\theta_0^{-\alpha_{0}}R_i^2,0\right)$. Then 
\begin{equation}\label{eq-energy-type-equation-with-Lebesgue-Steklov-everage}
\begin{aligned}
0&=\int_{-\theta_0^{-\alpha_{0}}R_i^2}^{t}\int_{B_{R_i}}\left(u_h\right)_t\left[\left(u_h-l_i\right)_-\eta_i^2\right]\,dxd\tau\\
&\qquad\qquad  +m\int_{-\theta_0^{-\alpha_{0}}R_i^2}^{t}\int_{B_{R_i}}\left(\mathcal{A}\left(\nabla u,u,x,t\right)\right)_h\nabla\left[\left(u_h-l_i\right)_-\eta_i^2\right]\,dxd\tau\\
&\qquad \qquad \qquad\qquad+\int_{-\theta_0^{-\alpha_{0}}R_i^2}^{t}\int_{B_{R_i}}\left(\mathcal{B}\left(u,x,t\right)\right)_h\nabla\left[\left(u_h-l_i\right)_-\eta_i^2\right]\,dxd\tau\\
&:=I+II+III.
\end{aligned}
\end{equation}
Letting $h\to 0$ in \eqref{eq-energy-type-equation-with-Lebesgue-Steklov-everage}, we have
\begin{equation}\label{eq-controlling-the-first-opart-time-derivatives-to-sup}
\begin{aligned}
&-I\geq \frac{1}{2}\int_{B_{R_i}\times\left\{t\right\}}\left(u-l_i\right)_-^2\eta_i^2\,dx-\frac{2^{2(i+1)}\theta_0^{\alpha_{0}}\left(2\theta_0\right)^2}{R_i^2}\int_{-\theta_0^{-\alpha_{0}}R_i^2}^{t}\int_{B_{R_i}}\chi_{\left[u\leq l_i\right]}\,dxd\tau
\end{aligned}
\end{equation}
and, by  Young's inequality
\begin{equation*}
\begin{aligned}
-II&\geq\frac{mC_1}{2}\int_{-\theta_0^{-\alpha_{0}}R_i^2}^{t}\int_{B_{R_i}}U^{m-1}\left|\nabla\left(u-l_i\right)_-\right|^2\eta_i^2\,dxd\tau\\
&\qquad \qquad-\frac{mC_2^22^{2i+2}\Lambda^{m-1}\left(2\theta_0\right)^2}{C_1R_i^2}\int_{-\theta_0^{-\alpha_{0}}R_i^2}^{t}\int_{B_{R_i}}\chi_{\left[u\leq l_i\right]}\,dxd\tau\\
&\qquad \qquad \qquad \qquad -2m\int_{-\theta_0^{-\alpha_{0}}R_i^2}^{t}\int_{B_{R_i}}U^{m-1}u^2\chi_{_{\left[u\leq l_j\right]}}\left(f_1+f_2\right)\left(\eta_i^2+\eta_i\left|\nabla\eta_i\right|^2+\eta_i\right)\,dxd\tau
\end{aligned}
\end{equation*}
and
\begin{equation}\label{eq-estimates-for-III-forcing-term-2}
\begin{aligned}
III&\leq\frac{mC_1\lambda\theta_0^{\,\alpha_0}}{4}\int_{-\theta_0^{-\alpha_{0}}R_i^2}^{t}\int_{B_{R_i}}\left|\nabla\left(u-l_i\right)_-\right|^2\eta_i^2\,dxd\tau\\
&\qquad \qquad+\frac{1+m}{mC_1\,\lambda\theta_0^{\,\alpha_0}}\int_{-\theta^{-\alpha_{0}}R_j^2}^{t}\int_{B_{R_j}}\,u^{\,2b}\,\chi_{_{\left[u\leq l_j\right]}}f_3^2\eta_j^2\,dxd\tau\\
&\qquad \qquad \qquad \qquad  +\frac{C_1\Lambda^{m-1}}{4}\int_{-\theta^{-\alpha_{0}}R_j^2}^{t}\int_{B_{R_j}}\left(u-l_i\right)_-^2\left|\nabla\eta_j\right|^2\,dxd\tau.
\end{aligned}
\end{equation}
Let $q_1$, $q_2\geq 1$ and $0<\kappa_1<1$ be constants satisfying
\begin{equation}\label{relation-between-contants-q-1-and-q-2-anbd-kappa-1}
\frac{n}{2q_1}+\frac{1}{q_2}=1-\kappa_1
\end{equation}
and let
\begin{equation*}
\widehat{q}=\frac{2q_1\left(1+\kappa\right)}{q_1-1}, \qquad \widehat{r}=\frac{2q_2\left(1+\kappa\right)}{q_2-1} \qquad \mbox{and} \qquad \kappa=\frac{2}{n}\kappa_1.
\end{equation*}
By an argument similar to the proof of Lemma 3.3 of \cite{KL1}, there exists a constant $C>0$ such that
\begin{equation}\label{eq-controlling-the-third-opart-gradient-derivatives-to-v-0}
\begin{aligned}
\int_{-\theta^{-\alpha_{0}}R_j^2}^{t}\int_{B_{R_j}}U^{m-1}u^2\chi_{_{\left[u\leq l_j\right]}}\left(f_1+f_2\right)\left(\eta_i^2+\eta_i\right)\,dxd\tau\leq C\Lambda^{m-1}\theta_0^{\,2}\left(\int_{-\theta^{-\alpha_{0}}R_j^2}^{t}\left|A_{l_j,R_j}^-(\tau)\right|^{\frac{\widehat{r}}{\widehat{q}}}\,d\tau\right)^{\frac{2}{\widehat{r}}\left(1+\kappa\right)}
\end{aligned}
\end{equation}
and
\begin{equation}\label{eq-controlling-the-third-opart-gradient-derivatives-to-v}
\begin{aligned}
\int_{-\theta^{-\alpha_{0}}R_j^2}^{t}\int_{B_{R_j}}\,u^{\,2b}\,\chi_{_{\left[u\leq l_j\right]}}f_3^2\eta_j^2\,dxd\tau\leq C\Lambda^{2\left(b-1\right)}\theta_0^{\,2}\left(\int_{-\theta^{-\alpha_{0}}R_j^2}^{t}\left|A_{l_j,R_j}^-(\tau)\right|^{\frac{\widehat{r}}{\widehat{q}}}\,d\tau\right)^{\frac{2}{\widehat{r}}\left(1+\kappa\right)}
\end{aligned}
\end{equation}
where $A_{l,r}^-(t)=\left\{x\in B_r:\left(u-l\right)_->0\right\}$.\\
\indent To figure out the difficulties from the diffusion coefficients $U^{m-1}$, we consider the function $u_{\omega}=\max\left\{u,\frac{\omega}{4}\right\}$ which is introduced in \cite{HU}. Then
\begin{equation}\label{eq-controlling-the-second-opart-gradient-derivatives-to-absolutely-positive}
\begin{aligned}
-II&\geq\frac{mC_1\lambda\theta_0^{\,\alpha_0}}{2}\int_{-\theta_0^{-\alpha_{0}}R_i^2}^{t}\int_{B_{R_i}}\left|\nabla\left(u_{\omega}-l_i\right)_-\right|^2\eta_i^2\,dxd\tau\\
&\qquad \qquad-\frac{mC_2^22^{2i+2}\Lambda^{m-1}\left(2\theta_0\right)^2}{C_1R_i^2}\int_{-\theta_0^{-\alpha_{0}}R_i^2}^{t}\int_{B_{R_i}}\chi_{\left[u\leq l_i\right]}\,dxd\tau\\
&\qquad \qquad \qquad \qquad -2m\int_{-\theta_0^{-\alpha_{0}}R_i^2}^{t}\int_{B_{R_i}}U^{m-1}u^2\chi_{_{\left[u\leq l_j\right]}}\left(f_1+f_2\right)\left(\eta_i^2+\eta_i\left|\eta_i\right|^2+\eta_i\right)\,dxd\tau
\end{aligned}
\end{equation}
By \eqref{eq-energy-type-equation-with-Lebesgue-Steklov-everage}, \eqref{eq-controlling-the-first-opart-time-derivatives-to-sup}, \eqref{eq-estimates-for-III-forcing-term-2}, \eqref{eq-controlling-the-third-opart-gradient-derivatives-to-v-0}, \eqref{eq-controlling-the-third-opart-gradient-derivatives-to-v} and \eqref{eq-controlling-the-second-opart-gradient-derivatives-to-absolutely-positive}, we get
\begin{equation}\label{eq-simplifying-of-eq-after-h-to-zero}
\begin{aligned}
&\sup_{-\theta_0^{-\alpha_{0}}R_i^2<t<0}\int_{B_{R_i}\times\left\{t\right\}}\left(u_{\omega}-l_i\right)_-^2\eta_i^2\,dx+\theta_0^{\alpha_{0}}\int_{-\theta_0^{-\alpha_{0}}R_i^2}^{0}\int_{B_{R_i}}\left|\nabla\left(u_{\omega}-l_i\right)_-\eta_i\right|^2\,dxdt\\
&\qquad \qquad \leq C_1\theta_0^2\left[\frac{2^{2\left(i+1\right)}\left(\theta_0^{\alpha_0}+\Lambda^{m-1}\right)}{R_i^2}\int_{-\theta_0^{-\alpha_0}R_i^2}^{0}\int_{B_{R_i}}\chi_{\left[u_{\omega}\leq l_i\right]}\,dxdt+\frac{\Lambda^{m-1}+\Lambda^{2(b-1)}}{\theta_0^{\alpha_0}}\left(\int_{-\theta^{-\alpha_{0}}R_j^2}^{t}\left|A_{l_j,R_j}^-(\tau)\right|^{\frac{\widehat{r}}{\widehat{q}}}\,d\tau\right)^{\frac{2}{\widehat{r}}\left(1+\kappa\right)}\right]
\end{aligned}
\end{equation}
for some constant $C_1$ depending on $m$ and $\lambda$. We now  take the change of variables
\begin{equation*}
z=\theta_0^{\alpha_{0}}\,t
\end{equation*}
and set the new functions
\begin{equation*}
\overline{u}_{\omega}\left(\cdot,z\right)=u_{\omega}\left(\cdot,\theta_0^{-\alpha_{0}}z\right) \qquad \mbox{and} \qquad \overline{\eta}_{i}\left(\cdot,z\right)=\eta_{i}\left(\cdot,\theta_0^{-\alpha_{0}}z\right).
\end{equation*}
Then, by \eqref{eq-simplifying-of-eq-after-h-to-zero}
\begin{equation}\label{eq-simplifying-with-change-of-variables}
\begin{aligned}
&\sup_{-R_i^2<z<0}\int_{B_{R_i}\times\left\{z\right\}}\left(\overline{u}_{\omega}-l_i\right)_-^2\overline{\eta}_i^2\,dx+\int_{-R_i^2}^{0}\int_{B_{R_i}}\left|\nabla\left(\overline{u}_{\omega}-l_i\right)_-\overline{\eta}_i\right|^2\,dxdz \\
&\qquad \qquad \qquad \leq  C_1\theta_0^2\left[\frac{2^{2\left(i+1\right)}}{R_i^2}\left(1+\left(\frac{\Lambda}{\theta_0^{\,\beta}}\right)^{m-1}\right)A_i+\left(\Lambda^{m-1}+\Lambda^{2(b-1)}\right)\theta^{-\alpha_0\left(2-\frac{1}{q_2}\right)}\left(\int_{-R_j^2}^{0}\left|A_{i}(z)\right|^{\frac{\widehat{r}}{\widehat{q}}}\,dz\right)^{\frac{2}{\widehat{r}}\left(1+\kappa\right)}\right]
\end{aligned}
\end{equation}
where 
\begin{equation*}
A_i=\int_{-R_i^2}^{0}\int_{B_{R_i}}\chi_{\left[\overline{u}_{\omega}\leq l_i\right]}\,dxdz \qquad \mbox{and}\qquad A_i(z)=\left\{x\in B_{R_j}:\overline{u}_{_{\omega}}(x,z)<l_i\right\}.
\end{equation*}
By Lemma \ref{eq-Sobolev-Type-Inequality} and \eqref{eq-simplifying-with-change-of-variables},
\begin{equation}\label{eq-simplifying-after-change-of-variables-with-A-i}
\begin{aligned}
&\left\|\left(\overline{u}_{\omega}-l_i\right)_-^2\overline{\eta}_i^2\right\|_{L^2\left(Q\left(R_i,R_i^2\right)\right)} \\
&\qquad \qquad \leq C\theta_0^2\,A_i^{\frac{2}{n+2}}\left[\frac{2^{2\left(i+1\right)}}{R_i^2}\left(1+\left(\frac{\Lambda}{\theta_0^{\,\beta}}\right)^{m-1}\right)A_i+\left(\Lambda^{m-1}+\Lambda^{2(b-1)}\right)\theta^{-\alpha_0\left(2-\frac{1}{q_2}\right)}\left(\int_{-R_j^2}^{0}\left|A_{i}(z)\right|^{\frac{\widehat{r}}{\widehat{q}}}\,dz\right)^{\frac{2}{\widehat{r}}\left(1+\kappa\right)}\right].
\end{aligned}
\end{equation}
Choose the number $\epsilon>0$ sufficiently small that
\begin{equation}\label{eq-condition-of-epsilon-for-holder-continuity-at-start}
\epsilon<\frac{nq_2\kappa}{2q_2-1}. 
\end{equation}
Then by an argument similar to the proof of Lemma 3.5 of \cite{KL1}, there exists a constant $C_2>0$ depending on $m$, $\lambda$ and $\frac{\Lambda}{\theta^{\,\beta}}$ such that
\begin{equation}\label{eq-iteration-for-X-jk-and-Y-j-first}
X_{i+1}\leq C_216^i\left(X_i^{1+\frac{2}{n+2}}+\Lambda^{b-1}\theta^{-\alpha_0\left(2-\frac{1}{q_2}\right)}R_i^{n\kappa}X_i^{\frac{2}{n+2}}Y_i\right)\leq C16^i\left(X_i^{1+\frac{2}{n+2}}+X_i^{\frac{2}{n+2}}Y_i\right) \qquad \forall i\in\N
\end{equation}
and
\begin{equation}\label{eq-iteration-for-X-jk-and-Y-j-second}
Y_{i+1}\leq C16^i\left(X_i+Y^{1+\kappa}_i\right)\qquad \forall i\in\N
\end{equation}
where
\begin{equation*}
X_i=\frac{A_i}{\left|Q\left(R_i,R_i^2\right)\right|} \qquad \mbox{and}\qquad Y_i=\frac{1}{\left|B_{R_i}\right|}\left(\int_{-R_i^2}^{0}\left|A_{i}(z)\right|^{\frac{\widehat{r}}{\widehat{q}}}\,dz\right)^{\frac{2}{\widehat{r}}}.
\end{equation*}
By \eqref{eq-iteration-for-X-jk-and-Y-j-first} and \eqref{eq-iteration-for-X-jk-and-Y-j-second}, there exist a constant $C>0$ such that
\begin{equation*}
L_{i+1}\leq C16^{i(1+\kappa)}L_i^{1+\widehat{\kappa}}\qquad \forall j\in\N
\end{equation*}
where $L_i=X_i+Y_i^{1+\kappa}$ and $\widehat{\kappa}=\min\left\{\kappa,\frac{2}{n+2}\right\}$. If we take the constant $\rho_0>0$ in \eqref{eq-first-condition-of-small-region-of-lower-for-holder-estimates} sufficiently small that
\begin{equation*}
L_0\leq C^{-\frac{1+\kappa}{\widehat{\kappa}}}16^{-\frac{1+\kappa}{\widehat{\kappa}^2}}
\end{equation*}
holds, then
\begin{equation*}
L_i\leq C^{-\frac{(1+\kappa)(1+\widehat{\kappa})}{\widehat{\kappa}}}16^{-\frac{(1+\kappa)(1+i\,\widehat{\kappa})}{\widehat{\kappa}^2}} \to 0 \qquad \mbox{as $i\to\infty$}
\end{equation*}
and the lemma follows.
\end{proof}

\begin{remark}
If $U$ is equivalent to $u^{\beta}$, i.e., there exists some constants $0<c\leq C<\infty$ such that
\begin{equation*}
cu^{\beta}\leq U\leq Cu^{\beta} \qquad \mbox{in $Q\left(R,\theta_0^{-\alpha_0}R^{2}\right)$},
\end{equation*}
then the constant $\rho_0$ in \eqref{eq-first-condition-of-small-region-of-lower-for-holder-estimates} is independent of $U$ and $\omega$.
\end{remark}
\begin{remark}\label{remark-first-alternative-for-fast-diffusion-type-system}
For a constant $0<a<1$, let's take $au^{a-1}\left(u^a-l_i^a\right)_-\eta_i^2$ as a test function in the proof of Lemma  \ref{lem-the-first-alternative-for-holder-estimates}. Then, we can get the following energy type inequality
\begin{equation}\label{eq-energy-type-inequality-for-fast-diffysion-type-system}
\begin{aligned}
&\sup_{-\theta^{\,-\alpha_{0}}R_i^2<t<0}\int_{B_{R_j}}\left(u^{\,a}-l^{\,a}_j\right)_-^2\eta_j^2\,dx+\int_{-\theta^{-\alpha_{0}}R_j^2}^{0}\int_{B_{R_j}}U^{\,m-1}\left|\nabla\left(u^{\,a}-l_j\right)_-\eta_j\right|^2\,dxdt\\
&\qquad \qquad \leq C\theta_0^{2a}\Bigg[\frac{2^{2\left(i+1\right)}\left(\theta_0^{\alpha_0}+\Lambda^{m-1}\left(\frac{\Lambda}{\theta_0^{\alpha_0}}\right)^{a}\right)}{R_i^2}\int_{-\theta_0^{-\alpha_0}R_i^2}^{0}\int_{B_{R_i}}\chi_{\left[u\leq l_i\right]}\,dxdt\\
&\qquad \qquad \qquad \qquad\qquad \qquad  +\frac{\Lambda^{m-1}\left(\frac{\Lambda}{\theta_0^{\alpha_0}}\right)^{a}+\Lambda^{2(b-1)}}{\theta_0^{\alpha_0}}\left(\int_{-\theta^{-\alpha_{0}}R_j^2}^{t}\left|A_{l_j,R_j}^-(\tau)\right|^{\frac{\widehat{r}}{\widehat{q}}}\,d\tau\right)^{\frac{2}{\widehat{r}}\left(1+\kappa\right)}\Bigg]
\end{aligned}
\end{equation}
If we choose the constant $a$ satisfying
\begin{equation*} 
m-1+a>0,
\end{equation*} 
then the right hand side of \eqref{eq-energy-type-inequality-for-fast-diffysion-type-system} will be controllable by the measure of the set $\left\{u\leq l_i\right\}$ even though $0<m<1$. Therefore, by similar arguments the lemma is still true for $0<m<1$, i.e., the first alternative can be extend to the fast diffusion type system.
\end{remark}

\subsection{The Second Alternative}

Suppose that the assumption of Lemma \ref{lem-the-first-alternative-for-holder-estimates} does not hold, i.e., for every sub-cylinder $Q\left(R,\theta_0^{-\alpha_{0}}R^2\right)$
\begin{equation*}\label{eq-start-point-of-the-second-alternative-assumption-violated}
\left|\left\{\left(x,t\right)\in Q\left(R,\theta_0^{-\alpha_{0}}R^2\right):u(x,t)<\frac{\omega}{2}\right\}\right|>\rho_0\left|Q\left(R,\theta_0^{-\alpha_{0}}R^2\right)\right|.
\end{equation*}
Then
\begin{equation*}
\left|\left\{\left(x,t\right)\in Q\left(R,\theta_0^{-\alpha_{0}}R^2\right):u(x,t)>\frac{\omega}{2}\right\}\right|\leq\left(1-\rho_0\right)\left|Q\left(R,\theta_0^{-\alpha_{0}}R^2\right)\right|
\end{equation*}
is valid for all cylinders 
\begin{equation*}
Q\left(R,\theta_0^{-\alpha_{0}}R^2\right)\subset Q\left(R, R^{2-\epsilon}\right).
\end{equation*}
By an argument similar to the Lemma 4.2 of \cite{KL2}, we have the following lemma

\begin{lemma}\label{lem-propostion-of-area-near-supremum-at-some-time-t-ast}
If \eqref{eq-first-condition-of-small-region-of-lower-for-holder-estimates} is violated, then there exists a time level
\begin{equation*}
t^{\ast}\in\left[-\theta_0^{-\alpha_{0}}R^2,-\frac{\rho_0}{2}\theta_0^{-\alpha_{0}}R^2\right]
\end{equation*}
such that
\begin{equation*}
\left|\mathcal{A}_0\right|=\left|\left\{x\in B_{R}:u\left(x,t^{\ast}\right)>\frac{\omega}{2}\right\}\right|<\left(\frac{1-\rho_0}{1-\frac{\rho_0}{2}}\right)\left|B_R\right|.
\end{equation*}
\end{lemma}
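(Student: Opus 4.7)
The plan is to establish this by a standard Fubini (pigeonhole) argument on time slices, which is the natural tool since the lemma is a purely measure-theoretic consequence of the violated hypothesis. I would argue by contradiction: assume that for every $t^{\ast}$ in the interval $I := \left[-\theta_0^{-\alpha_0}R^2,\,-\tfrac{\rho_0}{2}\theta_0^{-\alpha_0}R^2\right]$ one has
\[
\bigl|\{x \in B_R : u(x,t^{\ast}) > \omega/2\}\bigr| \,\geq\, \tfrac{1-\rho_0}{1-\rho_0/2}\,|B_R|.
\]

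Next, noting that the length of $I$ equals $(1-\rho_0/2)\,\theta_0^{-\alpha_0}R^2$, I would integrate this pointwise lower bound over $I$ and invoke Fubini to obtain
\[
\bigl|\{(x,t) \in B_R \times I : u(x,t) > \omega/2\}\bigr| \,\geq\, \tfrac{1-\rho_0}{1-\rho_0/2}\,|B_R|\cdot(1-\rho_0/2)\,\theta_0^{-\alpha_0}R^2 \,=\, (1-\rho_0)\,\bigl|Q(R,\theta_0^{-\alpha_0}R^2)\bigr|.
\]
Since $B_R \times I \subset Q(R,\theta_0^{-\alpha_0}R^2)$, the same lower bound persists for the super-level set inside the full parabolic cylinder.

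On the other hand, the assumption that \eqref{eq-first-condition-of-small-region-of-lower-for-holder-estimates} is violated reads $|\{u < \omega/2\}\cap Q(R,\theta_0^{-\alpha_0}R^2)| > \rho_0\,|Q(R,\theta_0^{-\alpha_0}R^2)|$, so passing to complements inside $Q(R,\theta_0^{-\alpha_0}R^2)$ gives the strict inequality $|\{u \geq \omega/2\}\cap Q(R,\theta_0^{-\alpha_0}R^2)| < (1-\rho_0)\,|Q(R,\theta_0^{-\alpha_0}R^2)|$. Combined with the trivial inclusion $\{u > \omega/2\} \subseteq \{u \geq \omega/2\}$, this contradicts the Fubini bound above, finishing the proof.

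There is essentially no technical obstacle here; the content of the lemma lies in the sharp choice of the ratio $\tfrac{1-\rho_0}{1-\rho_0/2}$, which is calibrated precisely so that its product with the relative length $(1-\rho_0/2)$ of $I$ equals $(1-\rho_0)$, producing a tight contradiction with the violated hypothesis. This step is a classical preparation in the De Giorgi--DiBenedetto oscillation scheme (compare Lemma~4.2 of \cite{KL2} referenced by the authors) whose purpose is to extract one favourable time slice from which the smallness of the super-level set can then be propagated into the whole cylinder in the remaining steps of the second alternative.
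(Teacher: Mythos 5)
Your argument is correct and is exactly the standard Fubini (pigeonhole) proof of this slicing lemma; the paper itself gives no details, simply citing Lemma~4.2 of \cite{KL2}, but that reference and the surrounding DiBenedetto intrinsic-scaling literature prove the statement in precisely the way you describe, with the ratio $\tfrac{1-\rho_0}{1-\rho_0/2}$ calibrated against the length $(1-\rho_0/2)\theta_0^{-\alpha_0}R^2$ of the time interval so the products match $(1-\rho_0)\left|Q\right|$.
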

By Lemma \ref{lem-propostion-of-area-near-supremum-at-some-time-t-ast}, there exists a time $t^{\ast}<0$ such that the region $\mathcal{A}_0$ takes a portion of the ball $B_R$. The next lemma shows that this occurs for all $t\geq t^{\ast}$. 
\begin{lemma}
There exists a positive integer $s_1>1$ depending on $\frac{\Lambda}{\theta_0^{\,\beta}}$ such that
\begin{equation}\label{eq-supremum-small-occurs-for-all-time-near-top}
\left|\left\{x\in B_{R}:u(x,t)>\left(1-\frac{1}{2^{s_1}}\right)\omega\right\}\right|<\left(1-\left(\frac{\rho_0}{2}\right)^2\right)\left|B_R\right|,\qquad \forall t\in\left[t^{\ast},0\right].
\end{equation}
\end{lemma}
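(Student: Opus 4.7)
The plan is to propagate the measure information from $t^{\ast}$ provided by Lemma~4.2 to every later time in $[t^{\ast},0]$ by a logarithmic test function \`a la De Giorgi--DiBenedetto, with the degenerate weight $U^{m-1}$ controlled from above through \textbf{Assumption II} and the scaling of the cylinder absorbing the parameter $\beta$ from \textbf{Assumption I}. For each integer $s\geq 1$ set $k_s=(1-2^{-s})\omega$, $H_s=\omega-k_s=2^{-s}\omega$, and define
\begin{equation*}
\psi_s(u)=\Bigl[\,\ln\frac{H_s}{H_s-(u-k_s)_{+}+cH_s}\,\Bigr]^{+}\qquad (c\in(0,1)\text{ small}),
\end{equation*}
which vanishes on $\{u\leq k_s\}$, is bounded by $(s+1)\ln 2$ on $\{u<\omega\}$, and satisfies the fundamental identity $\psi_s''=(\psi_s')^{2}\geq 0$.

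Inserting $\vp=2\psi_s(u_h)\psi_s'(u_h)\,\eta^{2}(x)$ in the Steklov-averaged formulation \eqref{eq-formulation-for-weak-solution-of-u-h}, where $\eta\in C_c^{\infty}(B_R)$ is a purely spatial cutoff with $\eta\equiv 1$ on $B_{(1-\sigma)R}$ and $|\nabla\eta|\leq(\sigma R)^{-1}$, integrating in time on $(t^{\ast},t)$ for $t\in(t^{\ast},0]$ and letting $h\to 0$ converts the time derivative into
\begin{equation*}
\int_{B_R}\psi_s^{2}(u(\cdot,t))\,\eta^{2}\,dx\;-\;\int_{B_R}\psi_s^{2}(u(\cdot,t^{\ast}))\,\eta^{2}\,dx.
\end{equation*}
From $\nabla\vp=2(\psi_s')^{2}(1+\psi_s)\eta^{2}\nabla u+4\psi_s\psi_s'\eta\nabla\eta$ and the structure conditions \eqref{eq-condition-1-for-measurable-functions-mathcal-A-and-mathcal-B}--\eqref{eq-condition-3-for-measurable-functions-mathcal-A-and-mathcal-B}, Young's inequality together with $U^{m-1}\leq\Lambda^{m-1}$ allows the coercive contribution from $\psi_s''=(\psi_s')^{2}$ to absorb the cross term $U^{m-1}\mathcal{A}\cdot\psi_s\psi_s'\eta\nabla\eta$ and the $\mathcal{B}$-contribution, while the remaining pieces (from $\nabla\eta$ and from the functions $f_i\in L^{\infty}$) yield an estimate of the form
\begin{equation*}
\sup_{t^{\ast}\leq t\leq 0}\int_{B_{(1-\sigma)R}}\psi_s^{2}(u(x,t))\,dx\;\leq\;\int_{B_R}\psi_s^{2}(u(x,t^{\ast}))\,dx+\frac{C\,(s+1)^{2}\Lambda^{m-1}}{\sigma^{2}\,\theta_0^{\alpha_0}}\,|B_R|,
\end{equation*}
with $C=C(m,n,C_i,\|f_i\|_{L^{\infty}})$; the factor $\theta_0^{-\alpha_0}$ reflects the length of $[t^{\ast},0]\subset[-\theta_0^{-\alpha_0}R^{2},0]$. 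At $t=t^{\ast}$, Lemma~4.2 gives $|\{u(\cdot,t^{\ast})>\omega/2\}|\leq\frac{1-\rho_0}{1-\rho_0/2}|B_R|$ and $\psi_s\leq(s+1)\ln 2$ there, so the first term on the right is at most $((s+1)\ln 2)^{2}\,\frac{1-\rho_0}{1-\rho_0/2}|B_R|$.

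To conclude, a direct computation shows that on $\{u(x,t)>(1-2^{-s_1})\omega\}$ one has $(u-k_s)_{+}\geq H_s(1-2^{s-s_1})$, whence $\psi_s\geq -\ln(2^{s-s_1}+c)\geq(s_1-s-1)\ln 2$ provided $s_1-s$ is sufficiently large relative to $|\ln c|$. Inserting this lower bound on the left of the energy inequality gives, for every $t\in[t^{\ast},0]$,
\begin{equation*}
\bigl|\{x\in B_{(1-\sigma)R}:u(x,t)>(1-2^{-s_1})\omega\}\bigr|\;\leq\;\frac{(s+1)^{2}}{(s_1-s-1)^{2}}\Bigl[\frac{1-\rho_0}{1-\rho_0/2}+\frac{C\,\Lambda^{m-1}}{\sigma^{2}\,\theta_0^{\alpha_0}(\ln 2)^{2}}\Bigr]|B_R|.
\end{equation*}
Adding the annular contribution $|B_R\setminus B_{(1-\sigma)R}|\leq n\sigma|B_R|$, fixing $s=1$, choosing $\sigma$ so small that $n\sigma<\tfrac{1}{2}(\rho_0/2)^{2}$, and then $s_1$ so large that the bracketed factor divided by $(s_1-2)^{2}$ is also below $\tfrac{1}{2}(\rho_0/2)^{2}$, delivers \eqref{eq-supremum-small-occurs-for-all-time-near-top}. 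The resulting dependence $s_1=s_1(\rho_0,m,n,\Lambda/\theta_0^{\beta})$ enters precisely through the identity $\Lambda^{m-1}/\theta_0^{\alpha_0}=(\Lambda/\theta_0^{\beta})^{m-1}$; the main technical obstacle is exactly the balance between the quadratic $s$-growth of the right-hand side (inevitable since $\sup\psi_s\sim s$) and the quadratic $(s_1-s)$-decay supplied by the logarithm, the latter being what forces $s_1$ to depend quantitatively on $\Lambda/\theta_0^{\beta}$.
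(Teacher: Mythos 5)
Your overall strategy---the logarithmic test function \`a la De Giorgi--DiBenedetto applied in the Steklov-averaged weak formulation, with the degeneracy absorbed through \textbf{Assumption II} and the intrinsic scaling of the cylinder---is the same as the paper's, and the derivation of the energy-type inequality is sound. However, there is a genuine gap in the quantitative bookkeeping at the final step, which makes your conclusion too strong (and the argument for it incorrect).

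The trouble is the pair of bounds on $\psi_s$ for fixed $s=1$ and $s_1\to\infty$. You assert that $\psi_1$ is bounded above by $(s+1)\ln 2=2\ln 2$ on $\{u<\omega\}$, and then that $\psi_1\geq(s_1-2)\ln 2$ on $\{u>(1-2^{-s_1})\omega\}$. These two statements require incompatible choices of $c$: the upper bound $\sup\psi_1=\ln(1/c)\leq 2\ln 2$ forces $c\geq\tfrac14$, while the lower bound $\psi_1\geq -\ln\bigl(2^{1-s_1}+c\bigr)\geq(s_1-2)\ln 2$ forces $c\lesssim 2^{1-s_1}$. For $s_1$ large these contradict each other. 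The correct choice, as in the paper (there $k=\omega/2$, $c=\omega/2^{s_2+1}$, which in your notation is $c\sim 2^{-s_1}$), makes the \emph{upper} bound on $\psi$ also grow like $s_1\ln 2$, so the prefactor in front of the initial-data term is $\bigl(\tfrac{s_1-1}{s_1-2}\bigr)^2\to 1$, \emph{not} $\tfrac{4}{(s_1-2)^2}\to 0$. Your claim that the bracketed term divided by $(s_1-2)^2$ can be made below $\tfrac12(\rho_0/2)^2$ by taking $s_1$ large is therefore false; if it were true you would have proved that the superlevel set has arbitrarily small measure, which is much stronger than the lemma and cannot hold in general (the term $\frac{1-\rho_0}{1-\rho_0/2}|B_R|$ inherited from Lemma~\ref{lem-propostion-of-area-near-supremum-at-some-time-t-ast} is an honest obstruction).

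The reason the lemma is nonetheless true is different: $\frac{1-\rho_0}{1-\rho_0/2}<1-(\rho_0/2)^2$ strictly (a direct computation shows the difference is $\frac{\rho_0}{8(1-\rho_0/2)}(4-2\rho_0+\rho_0^2)>0$), and with $c\sim 2^{-s_1}$ the prefactor $\bigl(\tfrac{s_1-1}{s_1-2}\bigr)^2$ tends to $1$ from above, so taking $s_1$ large keeps the first term strictly inside this gap, after which the boundary-layer term $n\sigma|B_R|$ and the remainders involving $(\Lambda/\theta_0^\beta)^{m-1}$ and $\|f_i\|_{L^\infty}$ are absorbed by choosing $\sigma$ small, then $s_1$ larger still, and finally $R$ small (the last step is needed to control the contributions from $\mathcal{B}$ and the $u^2 f_i$ terms, which carry factors of $R^{2-2\epsilon}$ that your sketch glosses over). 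This is exactly the choreography in the paper's proof. To repair your argument, fix $k=\omega/2$ and tie $c$ to $s_1$ via $c\sim 2^{-s_1}$ from the outset, keep track of the resulting $s_1$-dependent upper bound on $\psi$, and use the strict inequality between $\frac{1-\rho_0}{1-\rho_0/2}$ and $1-(\rho_0/2)^2$ as the engine of the conclusion rather than the vanishing of a prefactor.
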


\begin{proof}
We will use a modification of the proof of Lemma 3.7 of \cite{KL1} to prove the lemma. Let 
\begin{equation*}
H=\sup_{B_{R}\times\left[t^{\ast},0\right]}\left(u-\left(\mu^+-\frac{\omega}{2}\right)\right)_+\leq \frac{\omega}{2}
\end{equation*}
and assume that there exists a constant $1<s_2\in\N$ such that
\begin{equation*}
0<\frac{\omega}{2^{s_2+1}}<H.
\end{equation*}
If there's no such integer $s_2$, \eqref{eq-supremum-small-occurs-for-all-time-near-top} holds for any $s_1>1$ and the lemma follows.\\
\indent We now introduce the logarithmic function which appears in Section 2 of \cite{Di} by
\begin{equation*}
\Psi\left(H,\left(u-k\right)_+,c\right)=\max\left\{0,\,\, \log\left(\frac{H}{H-\left(u-k\right)_++c}\right)\right\}
\end{equation*}
for $k=\frac{\omega}{2}$ and $c=\frac{\omega}{2^{s_2+1}}$. Note that
\begin{equation}\label{eq-region-where-Psi-zero-on-u-leq-k}
\Psi\left(H,\left(u-k\right)_+,c\right)=0 \qquad \mbox{if $u\leq k=\frac{\omega}{2}$}.
\end{equation}
For simplicity, we let $\psi\left(u\right)=\Psi\left(H,\left(u-k\right)_+,c\right)$. Then $\psi$ satisfies
\begin{equation}\label{eq-condition-of-psi-from-Psi-H-u-k-c}
\psi\leq s_2\log 2, \qquad 0\leq \left(\psi\right)'\leq \frac{2^{s_2+1}}{\omega} \qquad \mbox{and} \qquad \psi''=\left(\psi'\right)^2\geq 0.
\end{equation}
Set 
\begin{equation*}
\vp=\left(\psi^2\left(u_h\right)\right)'\xi^2
\end{equation*} 
and take it as a test function in \eqref{eq-formulation-for-weak-solution-of-u-h} where $u_h$ is the Lebesgue-Steklov average of $u$ and $\xi(x)\geq 0$ is a smooth cut-off function such that
\begin{equation}\label{eq-condition-of-cut-off-function-independent-of-t}
\xi=1 \quad \mbox{in $B_{\left(1-\nu\right)R}$}, \qquad \xi=0 \quad \mbox{on $\partial B_{R}$} \qquad \mbox{and} \qquad \left|\nabla\xi\right|\leq \frac{C}{\nu R} 
\end{equation}
for some constants $0<\nu<1$ and $C>0$. Then integrating \eqref{eq-formulation-for-weak-solution-of-u-h} over $\left(t^{\ast},t\right)$ for all $t\in\left(t^{\ast},0\right)$, we have
\begin{equation}\label{eq-first-step-of-multiplying-test-=function-for-second-alternativwes}
\begin{aligned}
0&=\int_{t^{\ast}}^t\int_{B_R}\left(\psi^2\left(u_h\right)\xi^2\right)_{\tau}\,dxd\tau+m\int_{t^{\ast}}^t\int_{B_R}\left(U^{m-1}\mathcal{A}\left(\nabla u,u,x,t\right)\right)_h\cdot\nabla\left(\left(\psi^2\left(u_h\right)\right)'\xi^2\right)\,dxd\tau\\
&\qquad \qquad +\int_{t^{\ast}}^t\int_{B_R}\left(\mathcal{B}\left(u,x,t\right)\right)_h\cdot\nabla\left(\left(\psi^2\left(u_h\right)\right)'\xi^2\right)\,dxd\tau\\ 
&=I+II+III.
\end{aligned}
\end{equation}
Then we have
\begin{align}
I\to\int_{B_R\times\left\{t\right\}}\psi^2\left(u\right)\xi^2\,dx-\int_{B_R\times\left\{t^{\ast}\right\}}\psi^2\left(u\right)\xi^2\,dx \qquad \qquad \mbox{as $h\to 0$}\label{eq-first-integral-removing-time-derivatives-1}
\end{align}
and
\begin{equation}\label{eq-second-integral-using-youngs-indequality-after-h-to-010}
\begin{aligned}
II&\to m\int_{t^{\ast}}^t\int_{B_R}U^{m-1}\mathcal{A}\left(\nabla u,u,x,t\right)\cdot\nabla\left(\left(\psi^2\left(u\right)\right)'\xi^2\right)\,dxd\tau \qquad \mbox{as $h\to 0$}\\
&\geq C_1m\int_{t^{\ast}}^t\int_{B_R}U^{m-1}\left(1+\psi\right)\left(\psi'\right)^2\xi^2\left|\nabla u\right|^2\,dxd\tau\\
&\qquad \qquad -2m\int_{t^{\ast}}^t\int_{B_R}U^{m-1}\left(1+\psi\right)\left(\psi'\right)^2\xi^2u^2f_1\,dxd\tau\\
&\qquad \qquad  \qquad \qquad -8C_1m\int_{t^{\ast}}^t\int_{B_R}U^{m-1}\psi\left|\nabla \xi\right|^2\,dxd\tau \\
&\qquad \qquad  \qquad \qquad \qquad \qquad -4C_1m\int_{t^{\ast}}^t\int_{B_R}U^{m-1}\psi\left(\psi'\right)^2\xi^2u^2f_2^2\,dxd\tau
\end{aligned}
\end{equation}
and
\begin{equation}\label{eq-third-integral-using-youngs-indequality-after-h-to-010}
\begin{aligned}
-III\to &\int_{t^{\ast}}^t\int_{B_R}\mathcal{B}\left(u,x,t\right)\cdot\nabla\left(\left(\psi^2\left(u\right)\right)'\xi^2\right)\,dxd\tau \qquad \mbox{as $h\to 0$}\\
&\qquad =2\int_{t^{\ast}}^t\int_{B_R}u^b\left(1+\psi\right)\left(\psi'\right)^2\xi^2f_3\left|\nabla u\right|\,dxd\tau+4\int_{t^{\ast}}^t\int_{B_R}u^b\psi\psi'\xi f_3\left|\nabla \xi\right|\,dxd\tau\\
&\qquad \leq C_1m\lambda\theta_0^{\,\alpha_0}\int_{t^{\ast}}^t\int_{B_R}\left(1+\psi\right)\left(\psi'\right)^2\xi^2\left|\nabla u\right|^2\,dxd\tau\\
&\qquad \qquad+\frac{4(1+m)}{C_1m\lambda\theta_0^{\,\alpha_0}}\int_{t^{\ast}}^t\int_{B_R}u^{2b}\left(1+\psi\right)\left(\psi'\right)^2\xi^2f_3^2\,dxd\tau\\
&\qquad \qquad \qquad \qquad +2C_1\Lambda^{m-1}\int_{t^{\ast}}^t\int_{B_R}\psi\left|\nabla \xi\right|^2\,dxd\tau.
\end{aligned}
\end{equation}
By Lemma \ref{lem-propostion-of-area-near-supremum-at-some-time-t-ast}, \eqref{eq-condition-of-psi-from-Psi-H-u-k-c}, \eqref{eq-condition-of-cut-off-function-independent-of-t}, \eqref{eq-first-step-of-multiplying-test-=function-for-second-alternativwes}, \eqref{eq-first-integral-removing-time-derivatives-1}, \eqref{eq-second-integral-using-youngs-indequality-after-h-to-010} and \eqref{eq-third-integral-using-youngs-indequality-after-h-to-010},
\begin{equation}\label{eq-after-togethering-the-frist-and-second-integral0-and-simplify}
\begin{aligned}
\int_{B_R\times\left\{t\right\}}\psi^2\left(u\right)\xi^2\,dx\leq \left[s_2^2\left(\log2\right)^2\left(\frac{1-\rho_0}{1-\frac{\rho_0}{2}}\right)+C\left(\frac{s_2\log 2}{\nu^2}\left(\frac{\Lambda}{\theta_0^{\,\beta}}\right)^{m-1}+\left(\Lambda^{m+1}+\Lambda^{2(b-1)}\right)4^{s_2+1}R^{2-2\epsilon}s_2\log 2\right)\right]\left|B_R\right|
\end{aligned}
\end{equation}
holds for all $t\in\left(t^{\ast},0\right)$ with some constant $C>0$ depending $m$, $b$, $\lambda$ and $\left\|f_i\right\|_{L^{\infty}}$, $\left(i=1,2,3\right)$. Let 
\begin{equation*}
\mathcal{S}=\left\{x\in B_{(1-\nu)R}:u(x,t)>\left(1-\frac{1}{2^{s_2+1}}\right)\omega\right\}.
\end{equation*}
Then the left hand side of \eqref{eq-after-togethering-the-frist-and-second-integral0-and-simplify} is bounded below by
\begin{equation}\label{lower-bound-of-the-left-hand0sidelintegarl-of-simplify}
\int_{B_R\times\left\{t\right\}}\psi^2\left(u\right)\xi^2\,dx\geq \int_{\mathcal{S}}\psi^2\left(u\right)\xi^2\,dx\geq\left(s_2-1\right)^2\left(\log 2\right)^2\left|\mathcal{S}\right| \qquad \forall t\in\left(t^{\ast},0\right). 
\end{equation}
Observe that
\begin{equation}\label{eq-volume-of-set-greater-than-mu--2-to-=s-0-and-s-2-with-mathcal-S}
\left|\left\{x\in B_{R}:u(x,t)>\left(1-\frac{1}{2^{s_2+1}}\right)\omega\right\}\right|\leq\left|\mathcal{S}\right|+N\nu\left|B_R\right|.
\end{equation}
By \eqref{eq-after-togethering-the-frist-and-second-integral0-and-simplify}, \eqref{lower-bound-of-the-left-hand0sidelintegarl-of-simplify} and \eqref{eq-volume-of-set-greater-than-mu--2-to-=s-0-and-s-2-with-mathcal-S},
\begin{equation*}
\begin{aligned}
&\left|\left\{x\in B_{R}:u(x,t)>\left(1-\frac{1}{2^{s_2+1}}\right)\omega\right\}\right|\\
&\qquad \qquad \leq \left[\left(\frac{s_2}{s_2-1}\right)^2\left(\frac{1-\rho_0}{1-\frac{\rho_0}{2}}\right)+N\nu+C\left(\frac{s_2}{\nu^2(s_2-1)^2\log 2}\left(\frac{\Lambda}{\theta^{\,\beta}}\right)^{m-1}+\frac{\left(\Lambda^{m+1}+\Lambda^{2(b-1)}\right)4^{s_2+1}R^{2-2\epsilon}s_2}{\left(s_2-1\right)^2\log 2}\right)\right]\left|B_R\right|.
\end{aligned}
\end{equation*}
To complete the proof, we choose $\nu$ so small that $n\nu\leq \frac{3}{8}\rho_0^2$ and then $s_2$ so large that
\begin{equation*}
\left(\frac{s_2}{s_2-1}\right)^2\leq \left(1-\frac{1}{2}\rho_0\right)\left(1+\rho_0\right) \qquad \mbox{and} \qquad C\frac{s_2}{\nu^2(s_2-1)^2\log 2}\left(\frac{\Lambda}{\theta^{\,\beta}}\right)^{m-1}\leq \frac{1}{4}\rho_0^2.
\end{equation*}
With such $\nu$ and $s_2$, we choose the radius $R$ sufficiently small that
\begin{equation}\label{eq-range-of-R-sufficiently-small-second-2}
\frac{C\left(\Lambda^{m+1}+\Lambda^{2(b-1)}\right)4^{s_2+1}R^{2-2\epsilon}s_2}{\left(s_2-1\right)^2\log 2}\leq \frac{3}{8}\rho_0^2.
\end{equation}
Then \eqref{eq-supremum-small-occurs-for-all-time-near-top} holds for $s_1=s_2+1$ and the lemma follows.
\end{proof}

Since $t^{\ast}\in\left[-\theta_0^{-\alpha_{0}}R^2,-\frac{\rho_0}{2}\theta_0^{-\alpha_{0}}R^2\right]$, the previous lemma implies the following result.
\begin{cor}\label{cor--propostion-of-area-near-supremum-at-some-time-t-ast}
There exists a positive integer $s_1>s_0$ such that for all $t\in\left(-\frac{\rho_0}{2}\theta_0^{-\alpha_{0}}R^2,0\right)$
\begin{equation}\label{eq-supremum-small-occurs-for-all-time-near-top-2}
\left|\left\{x\in B_{R}:u(x,t)>\left(1-\frac{1}{2^{s_1}}\right)\omega\right\}\right|<\left(1-\left(\frac{\rho_0}{2}\right)^2\right)\left|B_R\right|.
\end{equation}
\end{cor}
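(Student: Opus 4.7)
The plan is straightforward: this corollary is an immediate consequence of the previous lemma combined with the location of the time level $t^{\ast}$ supplied by Lemma \ref{lem-propostion-of-area-near-supremum-at-some-time-t-ast}. No new energy estimate or iteration is needed; one only has to shrink the time interval on which the previous conclusion is already known.

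First I would invoke the previous lemma to fix the positive integer $s_1$ (depending on $\Lambda/\theta_0^{\beta}$) for which
\begin{equation*}
\left|\left\{x\in B_R: u(x,t)>\left(1-\tfrac{1}{2^{s_1}}\right)\omega\right\}\right|<\left(1-\left(\tfrac{\rho_0}{2}\right)^2\right)|B_R|
\end{equation*}
holds for every $t\in[t^{\ast},0]$. Next, I would recall that Lemma \ref{lem-propostion-of-area-near-supremum-at-some-time-t-ast} guarantees $t^{\ast}\le -\tfrac{\rho_0}{2}\theta_0^{-\alpha_0}R^2$, so that
\begin{equation*}
\left(-\tfrac{\rho_0}{2}\theta_0^{-\alpha_0}R^2,\,0\right)\subset [t^{\ast},0].
\end{equation*}
Consequently the measure estimate above automatically remains valid on the smaller interval, which is precisely the statement of the corollary, with the very same $s_1$ selected from the previous lemma.

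There is no genuine obstacle in this step: the entire technical content, namely the choice of logarithmic test function $\Psi$, the energy inequality, and the delicate tuning of $\nu$, $s_2$, and the radius $R$ so that \eqref{eq-range-of-R-sufficiently-small-second-2} holds, is already executed in the proof of the preceding lemma. The sole purpose of this corollary is cosmetic but useful: it replaces the implicit time $t^{\ast}$, whose exact location within $\bigl[-\theta_0^{-\alpha_0}R^2,-\tfrac{\rho_0}{2}\theta_0^{-\alpha_0}R^2\bigr]$ is uncontrolled, by the explicit lower endpoint $-\tfrac{\rho_0}{2}\theta_0^{-\alpha_0}R^2$. This normalization is what permits the subsequent arguments of the second alternative to proceed on a fixed subcylinder independent of the auxiliary time $t^{\ast}$.
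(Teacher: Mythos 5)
Your proposal is correct and coincides with the paper's own one-line justification: since Lemma \ref{lem-propostion-of-area-near-supremum-at-some-time-t-ast} places $t^{\ast}\in\left[-\theta_0^{-\alpha_0}R^2,-\tfrac{\rho_0}{2}\theta_0^{-\alpha_0}R^2\right]$, the interval $\left(-\tfrac{\rho_0}{2}\theta_0^{-\alpha_0}R^2,0\right)$ sits inside $[t^{\ast},0]$ and the measure bound carries over verbatim with the same $s_1$.
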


To make the region where $u$ is close to its supremum to be arbitrary small, we review the following lemma.
\begin{lemma}[De Giorgi\cite{De}]\label{De-giorgi}
If $f\in W^{1,1}(B_r)$ $(B_r\subset\R^n)$ and $l,k\in \R$, $k<l$,
then
\begin{equation*}
(l-k)\left|\left\{x\in B_r: f(x)>l\right\}\right|\leq
\frac{Cr^{n+1}}{\left|\left\{x\in B_r:
f(x)<k\right\}\right|}\int_{k<f<l}|\nabla f|\,\,dx,
\end{equation*}
where $C$ depends only on $n$.
\end{lemma}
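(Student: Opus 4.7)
The plan is to reduce this to the classical Poincar\'e inequality applied to a suitable truncation of $f$. I would introduce the function
\begin{equation*}
g(x) = \min\bigl(\max(f(x), k), l\bigr) - k,
\end{equation*}
so that $g \in W^{1,1}(B_r)$ takes values in $[0, l-k]$, vanishes identically on $A_k := \{x \in B_r : f(x) < k\}$, equals $l-k$ on $A_l := \{x \in B_r : f(x) > l\}$, and has $|\nabla g| = |\nabla f|\,\chi_{\{k < f < l\}}$ almost everywhere.

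First I would apply the standard Poincar\'e inequality on the ball $B_r$ to $g$, giving
\begin{equation*}
\int_{B_r} |g(x) - \bar g|\,dx \leq C_n\, r \int_{B_r} |\nabla g|\,dx = C_n\, r \int_{\{k < f < l\}} |\nabla f|\,dx,
\end{equation*}
where $\bar g = |B_r|^{-1}\int_{B_r} g\,dx$ and $C_n$ depends only on $n$. Next I would estimate $\bar g$ from below by observing that $g \equiv l-k$ on $A_l$, so
\begin{equation*}
\bar g \geq \frac{(l-k)\,|A_l|}{|B_r|}.
\end{equation*}
On the other hand, since $g \equiv 0$ on $A_k$, I get the lower bound
\begin{equation*}
\int_{B_r} |g - \bar g|\,dx \geq \int_{A_k} |0 - \bar g|\,dx = \bar g\,|A_k|.
\end{equation*}

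Combining the last three displays and using $|B_r| = \omega_n r^n$ yields
\begin{equation*}
(l-k)\,|A_l|\,|A_k| \leq |B_r|\,\bar g\,|A_k| \leq |B_r|\,C_n\, r \int_{\{k < f < l\}} |\nabla f|\,dx \leq C\, r^{n+1} \int_{\{k < f < l\}} |\nabla f|\,dx,
\end{equation*}
which is the asserted inequality (one may assume $|A_k| > 0$, as otherwise there is nothing to prove by dividing both sides). There is no substantial obstacle here: the only subtlety is choosing the truncation carefully so that its gradient picks up only the middle slice $\{k < f < l\}$ and its zero set gives access to $|A_k|$ through the Poincar\'e-type oscillation estimate; once that truncation is in place, the argument is essentially a three-line calculation.
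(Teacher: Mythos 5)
Your proof is correct. The paper does not actually prove this lemma (it is stated with a citation to De Giorgi), so there is no internal proof to compare against, but your argument is sound: the truncation $g=\min(\max(f,k),l)-k$ has $|\nabla g|=|\nabla f|\chi_{\{k<f<l\}}$, the $W^{1,1}$ Poincar\'e inequality on $B_r$ gives $\int_{B_r}|g-\bar g|\,dx\leq C_n r\int_{B_r}|\nabla g|\,dx$, and the two lower bounds $\bar g\geq (l-k)|A_l|/|B_r|$ and $\int_{B_r}|g-\bar g|\,dx\geq \bar g|A_k|$ assemble exactly as you say into the claimed inequality, with $|B_r|\sim r^n$ supplying the $r^{n+1}$. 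It is worth noting that the classical De Giorgi argument does not invoke Poincar\'e as a black box; instead it bounds the double integral $\int_{A_l}\int_{A_k}\bigl(g(x)-g(y)\bigr)\,dy\,dx\geq (l-k)|A_l||A_k|$ directly via the Riesz-potential representation $|g(x)-g(y)|\lesssim \int_{B_r}|\nabla g(z)|\,|x-z|^{1-n}\,dz$ and then integrates in $y$. Your route hides that potential estimate inside the Poincar\'e inequality, which makes the argument shorter and more modular; the classical route is more self-contained and makes visible why the constant is purely dimensional. Both are standard and both are correct, so there is nothing to fix.
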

By Corollary \ref{cor--propostion-of-area-near-supremum-at-some-time-t-ast} and Lemma \ref{De-giorgi}, we have the following lemma.
\begin{lemma}\label{lem-last-condition-for-second-alternativ-from-violate-of-first-alternative}
If \eqref{eq-first-condition-of-small-region-of-lower-for-holder-estimates} is violated, for every $\nu_{\ast}\in\left(0,1\right)$ there exists a natural number $s^{\ast}>s_1>1$ depending on $\frac{\Lambda}{\theta^{\,\beta}}$ such that
\begin{equation}\label{eq-condition-with-s-upper-ast-and-nu--sub-ast-for-second-alternative}
\left|\left\{(x,t)\in Q\left(R,\frac{\rho_0}{2}\theta_0^{-\alpha_{0}}R^2\right): u(x,t)>\left(1-\frac{1}{2^{s^{\ast}}}\right)\omega\right\}\right|\leq \nu_{\ast}\left|Q\left(R,\frac{\rho_0}{2}\theta_0^{-\alpha_{0}}R^2\right)\right|.
\end{equation}
\end{lemma}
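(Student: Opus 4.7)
The plan is to adapt the DiBenedetto slice-by-slice iteration, combining the isoperimetric Lemma~\ref{De-giorgi} of De~Giorgi with the energy estimate of Lemma~\ref{lem-the-first-alternative-for-holder-estimates}. For $s\geq s_1$, set $k_s=(1-2^{-s})\omega$ and $l_s=(1-2^{-(s+1)})\omega$, so that $l_s-k_s=\omega/2^{s+1}$; write $Q_R=Q\bigl(R,\tfrac{\rho_0}{2}\theta_0^{-\alpha_0}R^2\bigr)$ and put
\[
B_s=\{(x,t)\in Q_R:u(x,t)>l_s\},\qquad A_s=\{(x,t)\in Q_R: k_s<u(x,t)<l_s\}.
\]
For each $t$ in the time projection of $Q_R$, Corollary~\ref{cor--propostion-of-area-near-supremum-at-some-time-t-ast} gives $|\{x\in B_R:u(x,t)<k_s\}|\geq (\rho_0/2)^2|B_R|$ for every $s\geq s_1$. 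Applying Lemma~\ref{De-giorgi} to $u(\cdot,t)$ at the levels $k_s,l_s$ and then integrating in $t$ yields
\[
|B_s|\leq \frac{C\cdot 2^{s+1}R}{\omega\rho_0^2}\int\!\!\int_{A_s}|\nabla u|\,dx\,dt.
\]

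The next step is to control this right-hand side by the Cauchy--Schwarz split
\[
\int\!\!\int_{A_s}|\nabla u|\leq\Bigl(\int\!\!\int_{A_s}U^{m-1}|\nabla u|^2\Bigr)^{1/2}\Bigl(\int\!\!\int_{A_s}U^{-(m-1)}\Bigr)^{1/2}.
\]
On $A_s$ (for $s\geq 1$) one has $u\geq\omega/2=2\theta_0$, so \textbf{Assumption~I} gives $U\geq\lambda(2\theta_0)^{\beta}$ and hence $U^{-(m-1)}\leq C\theta_0^{-\alpha_0}$. The energy factor is bounded by testing \eqref{eq-standard-form-for-local-continuity-estimates-intro-1} with $(u_h-k_s)_+\eta^2$, arguing exactly as in the derivation of \eqref{eq-simplifying-of-eq-after-h-to-zero} but with the reversed sign; using $(u-k_s)_+\leq\omega/2^s$ together with \textbf{Assumption~II} to dominate the factors carrying $U^{m-1}$ on the cut-off, one obtains
\[
\int\!\!\int_{Q_R}U^{m-1}|\nabla(u-k_s)_+|^2\,dx\,dt\leq C\Bigl(\frac{\omega}{2^s}\Bigr)^{2}\Bigl[1+\Bigl(\frac{\Lambda}{\theta_0^\beta}\Bigr)^{m-1}\Bigr]R^n.
\]
Inserting these into the De~Giorgi estimate and squaring, the accumulated factor $\theta_0^{-\alpha_0}R^{n+2}$ reassembles into $|Q_R|$, yielding
\[
|B_s|^2\leq \frac{C}{\rho_0^5}\Bigl[1+\Bigl(\frac{\Lambda}{\theta_0^\beta}\Bigr)^{m-1}\Bigr]\,|Q_R|\cdot|A_s|.
\]

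Finally, the slabs $\{A_s\}_{s\geq s_1}$ are pairwise disjoint subsets of $Q_R$ and $s\mapsto|B_s|$ is decreasing, so summing the last inequality over $s=s_1,\dots,s^*-1$ produces
\[
(s^*-s_1)\,|B_{s^*-1}|^2\leq\frac{C}{\rho_0^5}\Bigl[1+\Bigl(\frac{\Lambda}{\theta_0^\beta}\Bigr)^{m-1}\Bigr]|Q_R|^2.
\]
Taking $s^*$ large enough (depending on $\nu_*$, $\rho_0$, and $\Lambda/\theta_0^{\beta}$) makes $|B_{s^*-1}|\leq\nu_*|Q_R|$, and since $\{u>(1-2^{-s^*})\omega\}=B_{s^*-1}$ this is precisely \eqref{eq-condition-with-s-upper-ast-and-nu--sub-ast-for-second-alternative}.

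The main technical obstacle is bookkeeping the degenerate weight $U^{m-1}$: the Cauchy--Schwarz split costs a factor $\theta_0^{-\alpha_0/2}$, and one must verify that it is exactly absorbed by the $\theta_0^{\alpha_0/2}$ hidden in $|Q_R|^{1/2}$, so that the final constant depends on $\Lambda$ and $\omega$ only through the dimensionless ratio $\Lambda/\theta_0^{\beta}$. This scale-invariant dependence is precisely what the intrinsic cylinder $Q_R$ is engineered to produce, and it is what allows $s^*$ to be chosen as a function of $\Lambda/\theta_0^\beta$ and $\nu_*$ alone.
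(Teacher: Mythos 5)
Your argument follows the paper's route essentially step for step: slice-wise De~Giorgi using Corollary~\ref{cor--propostion-of-area-near-supremum-at-some-time-t-ast} to bound the denominator, a Cauchy--Schwarz split, the energy estimate from testing with $(u_h-k)_+\eta^2$, and summation over the disjoint slabs to make $s^*-s_1$ appear. The only cosmetic difference is where you invoke \textbf{Assumption~I}: the paper removes the weight $U^{m-1}$ from the energy estimate immediately (since $U^{m-1}\geq\lambda^{m-1}(2\theta_0)^{\alpha_0}$ on $\{u>k_s\}$), producing the unweighted bound \eqref{eq-upper-bound-of-gradient-of-u-i-minus-k-in-second-alternative}, whereas you carry the weight into the Cauchy--Schwarz split and cancel it against the factor $(\int U^{-(m-1)})^{1/2}$; these are equivalent.

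The one genuine omission is that your stated energy estimate drops the contributions coming from the perturbations $\mathcal{A}$, $\mathcal{B}$ in \eqref{eq-condition-1-for-measurable-functions-mathcal-A-and-mathcal-B}--\eqref{eq-condition-3-for-measurable-functions-mathcal-A-and-mathcal-B}. Carried through correctly, as in \eqref{eq-upper-bound-of-gradient-of-u-i-minus-k-in-second-alternative}, these produce an additional term $\sim 2^{s}R^{n\kappa-\epsilon(2-1/q_2)}$ inside the bracket, which grows with $s$ and would spoil the choice of $s^*$ unless one also imposes the smallness condition \eqref{eq-second-condition-for-range-of-R-by-s-ast} on $R$. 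Your final "take $s^*$ large" step therefore needs the companion "and then $R$ small" step; without it, the constant in front of $|A_s|$ is not uniform in $s$ once the lower-order terms are included. This is a bookkeeping gap, not a strategic one.
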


\begin{proof}
We will use a  modification of the proof of Lemma 8.1 of Section III of \cite{Di} to prove the lemma. Let $k=\left(1-\frac{1}{2^s}\right)\omega$ for $s\geq s_1$ and let $\eta(x,t)\in C^{\infty}\left(Q\left(2R,\rho_0\theta^{-\alpha_{0}}R^2\right)\right)$ be a cut-off function such that
\begin{equation*}
\begin{cases}
\begin{array}{cccl}
0\leq \eta\leq 1 &&& \mbox{in $Q\left(2R,\rho_0\theta^{-\alpha_{0}}R^2\right)$}\\
\eta=1 &&& \mbox{in $Q\left(R,\frac{\rho_0}{2}\theta^{-\alpha_{0}}R^2\right)$ }\\
\eta=0 &&& \mbox{on the parabolic boundary of $Q\left(2R,\rho_0\theta^{-\alpha_{0}}R^2\right)$}\\
\left|\nabla\eta\right|\leq \frac{1}{R},\qquad \left|\eta_t\right|\leq \frac{2\theta^{\alpha_{0}}}{\rho_0R^2}.&&&
\end{array}
\end{cases}
\end{equation*} 
Put $\vp=\left(u_h-k\right)_+\xi^2$ in the weak formula \eqref{eq-formulation-for-weak-solution-of-u-h}, Integrate it over $\left(-\rho_0\theta^{-\alpha_{0}}R^2,t\right)$ for $t\in\left(-\rho_0\theta^{-\alpha_{0}}R^2,0\right)$ and take the limit as $h\to 0$. Then, by an argument simlar to the proof of Energy type inequality \eqref{eq-simplifying-of-eq-after-h-to-zero}  we have 
\begin{equation}\label{eq-upper-bound-of-gradient-of-u-i-minus-k-in-second-alternative}
\begin{aligned}
&\int_{-\frac{\rho_0}{2}\theta^{-\alpha_{0}}R^2}^t\int_{B_{R}}\left|\nabla \left(u^{\,i}-k\right)_+\right|^2\,dx\,d\tau\\
&\qquad \leq C\left(\frac{\omega}{2^s}\right)^2\frac{1}{R^2}\left(1+\left(\frac{\Lambda}{\theta^{\,\beta}}\right)^{m-1}+2^sR^{n\kappa-\epsilon\left(2-\frac{1}{q_2}\right)}\right)\left|Q\left(R,\frac{\rho_0}{2}\theta^{-\alpha_{0}}R^2\right)\right|
\end{aligned}
\end{equation}
for some constant $C>0$. Let
\begin{equation*}
A_s\left(t\right)=\left\{x\in B_{R}:u(x,t)>\left(1-\frac{1}{2^s}\right)\omega\right\}, \qquad \forall t\in\left(-\frac{\rho_0}{2}\theta^{-\alpha_{0}}R^2,0\right)
\end{equation*} 
and 
\begin{equation*}
A_s=\int_{-\frac{\rho_0}{2}\theta^{-\alpha_{0}}R^2}^0\left|A_s(t)\right|\,dt.
\end{equation*}
Then, by Corollary \ref{cor--propostion-of-area-near-supremum-at-some-time-t-ast}, Lemma \ref{De-giorgi} and \eqref{eq-upper-bound-of-gradient-of-u-i-minus-k-in-second-alternative} we have
\begin{equation*}
\begin{aligned}
&\left(\frac{\omega}{2^{s+1}}\right)\left|A_{s+1}(t)\right|\leq \frac{CR}{\rho_0^2}\int_{\left\{\left(1-\frac{1}{2^s}\right)\omega<u<\left(1-\frac{1}{2^{s+1}}\right)\omega\right\}}\left|\nabla u\right|\,dx\qquad \qquad\forall s=s_1,\cdots,s^{\ast}-1\\
&\Rightarrow \qquad \left(\frac{\omega_{_M}}{2^{s+1}}\right)A_{s+1}\leq \frac{CR}{\rho_0^2}\left(\int_{-\frac{\rho_0}{2}\theta^{-\alpha_{0}}R^2}^0\int_{B_R}\left|\nabla(u-k)_+\right|^2\,dx\,dt\right)^{\frac{1}{2}}\left|A_s\bs A_{s+1}\right|^{\frac{1}{2}}\\
&\Rightarrow \qquad A_{s+1}^2\leq \frac{C}{\rho_0^4}\left(1+\left(\frac{\Lambda}{\theta^{\,\beta}}\right)^{m-1}+2^{s^{\ast}}R^{n\kappa-\epsilon\left(2-\frac{1}{q_2}\right)}\right)\left|Q\left(R,\frac{\rho_0}{2}\theta^{-\alpha_{0}}R^2\right)\right|\left|A_s\bs A_{s+1}\right|  \\
&\Rightarrow \qquad \left(s^{\ast}-s_1\right)A_{s^{\ast}}^2\leq \sum_{s=s_1}^{s^{\ast}-1}A_{s+1}^2\leq \frac{C}{\rho_0^4}\left(1+\left(\frac{\Lambda}{\theta^{\,\beta}}\right)^{m-1}+2^{s^{\ast}}R^{n\kappa-\epsilon\left(2-\frac{1}{q_2}\right)}\right)\left|Q\left(R,\frac{\rho_0}{2}\theta^{-\alpha_{0}}R^2\right)\right|\left|A_{s_1}\bs A_{s^{\ast}}\right|\\
&\Rightarrow \qquad A_{s^{\ast}}^2\leq \frac{C}{\rho_0^4\left(s^{\ast}-s_1\right)}\left(1+\left(\frac{\Lambda}{\theta^{\,\beta}}\right)^{m-1}+2^{s^{\ast}}R^{n\kappa-\epsilon\left(2-\frac{1}{q_2}\right)}\right)\left|Q\left(R,\frac{\rho_0}{2}\theta^{-\alpha_{0}}R^2\right)\right|^2.
\end{aligned}
\end{equation*}
Thus if we choose $s^{\ast}\in\N$ sufficiently large that
\begin{equation*}
\frac{C}{\rho_0^4\left(s^{\ast}-s_1\right)}\left(2+\left(\frac{\Lambda}{\theta^{\,\beta}}\right)^{m-1}\right)\leq \nu_{\ast}^2
\end{equation*}
and then $R$ sufficiently small that
\begin{equation}\label{eq-second-condition-for-range-of-R-by-s-ast}
2^{2s^{\ast}}R^{n\kappa-\epsilon\left(2-\frac{1}{q_2}\right)}\leq 1,
\end{equation}
then \eqref{eq-condition-with-s-upper-ast-and-nu--sub-ast-for-second-alternative} holds and the lemma follows.
\end{proof}

\begin{remark}
If $U$ is equivalent to $u^{\beta}$, i.e., there exists some constants $0<c\leq C<\infty$ such that
\begin{equation*}
cu^{\beta}\leq U\leq Cu^{\beta} \qquad \mbox{in $Q\left(R,\theta_0^{-\alpha_0}R^{2}\right)$},
\end{equation*}
then the constant $s^{\ast}$ is independent of $U$ and $\omega$.
\end{remark}
By Lemma \ref{lem-last-condition-for-second-alternativ-from-violate-of-first-alternative}, we have a similar assumption to the one in Lemma \ref{lem-the-first-alternative-for-holder-estimates} for sufficiently small number $\nu_{\ast}>0$. Therefore, by an argument similar to the proof of Lemma \ref{lem-the-first-alternative-for-holder-estimates}, we can have the following result.

\begin{lemma}\label{lem-the-second-alternative-for-holder-estimates}
The number $\nu_{\ast}\in\left(0,1\right)$ can be chosen such that
\begin{equation*}
u(x,t)\leq \left(1-\frac{1}{2^{s^{\ast}+1}}\right)\omega \qquad \mbox{a.e. on $Q\left(\frac{R}{2},\frac{\rho_0}{2}\theta_{0}^{-\alpha_{0}}\left(\frac{R}{2}\right)^2\right)$}. 
\end{equation*}
\end{lemma}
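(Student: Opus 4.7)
The plan is to imitate the De Giorgi iteration of Lemma \ref{lem-the-first-alternative-for-holder-estimates}, but with upper truncations in place of lower ones. Introduce the shrinking radii $R_i=\frac{R}{2}+\frac{R}{2^{i+1}}$, the increasing truncation levels
\begin{equation*}
l_i \;=\; \Big(1-\tfrac{1}{2^{s^{\ast}+1}}\Big)\omega \;-\; \tfrac{\omega}{2^{s^{\ast}+1+i}},
\end{equation*}
which satisfy $l_0=(1-2^{-s^{\ast}})\omega$ and $l_i\nearrow(1-2^{-s^{\ast}-1})\omega$, and the nested cylinders $Q_i=Q\bigl(R_i,\tfrac{\rho_0}{2}\theta_0^{-\alpha_0}R_i^2\bigr)$ equipped with cutoff functions $\eta_i$ which equal $1$ on $Q_{i+1}$, vanish on the parabolic boundary of $Q_i$, and satisfy the usual bounds $|\nabla\eta_i|\lesssim 2^i/R$, $|(\eta_i)_t|\lesssim 4^i\theta_0^{\alpha_0}/(\rho_0 R^2)$.

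Next, test the Steklov-averaged formulation \eqref{eq-formulation-for-weak-solution-of-u-h} with $\varphi=(u_h-l_i)_+\eta_i^2$ and integrate over $Q_i$. The crucial observation is that on $\{u>l_i\}\supset\{u>l_0\}$ we have $u\geq \omega/2\geq 2\theta_0$, so Assumption I gives
\begin{equation*}
U^{m-1}\;\geq\;\lambda^{m-1}u^{(m-1)\beta}\;\geq\; c_0\,\theta_0^{\,\alpha_0}
\end{equation*}
on this set, making the equation uniformly parabolic in the intrinsic scale. Combining this lower bound with Assumption II (giving $U^{m-1}\leq\Lambda^{m-1}$) and absorbing the $\mathcal{B}$-term by Young's inequality exactly as in \eqref{eq-estimates-for-III-forcing-term-2}, I obtain, after letting $h\to 0$, a Caccioppoli-type estimate structurally identical to \eqref{eq-simplifying-of-eq-after-h-to-zero}:
\begin{equation*}
\sup_{t}\int_{B_{R_i}\times\{t\}}(u-l_i)_+^2\eta_i^2\,dx
\;+\;\theta_0^{\alpha_0}\!\!\int_{Q_i}\!|\nabla((u-l_i)_+\eta_i)|^2\,dx\,dt
\;\leq\; C\theta_0^{\,2}\Bigl[\,\tfrac{4^{i}}{R^2}\bigl(1+(\tfrac{\Lambda}{\theta_0^\beta})^{m-1}\bigr)|A_i|+\cdots\Bigr],
\end{equation*}
where $A_i=\{(x,t)\in Q_i:u>l_i\}$ and the dots collect the lower-order contributions bounded in terms of $|A_i|$ via \eqref{eq-controlling-the-third-opart-gradient-derivatives-to-v-0}--\eqref{eq-controlling-the-third-opart-gradient-derivatives-to-v}.

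Rescaling $z=\theta_0^{\alpha_0}t$, applying the weighted Sobolev inequality \eqref{eq-weighted-sobolev-inequality-for-holder}, and running exactly the bookkeeping from the proof of Lemma \ref{lem-the-first-alternative-for-holder-estimates}, I reduce the problem to a coupled recursion
\begin{equation*}
L_{i+1}\;\leq\; C\,16^{\,i(1+\kappa)}\,L_i^{\,1+\widehat{\kappa}},\qquad L_i=X_i+Y_i^{1+\kappa},
\end{equation*}
where $X_i$ and $Y_i$ are the relative space-time and sliced measures of $A_i$ defined in full analogy with Lemma \ref{lem-the-first-alternative-for-holder-estimates}. The seed $X_0=|A_0|/|Q_0|$ is precisely the quantity controlled by Lemma \ref{lem-last-condition-for-second-alternativ-from-violate-of-first-alternative}, so $L_0\lesssim \nu_{\ast}$; choosing $\nu_{\ast}$ so small that $L_0\leq C^{-(1+\kappa)/\widehat\kappa}16^{-(1+\kappa)/\widehat\kappa^{2}}$ forces $L_i\to 0$, whence $(u-(1-2^{-s^{\ast}-1})\omega)_+=0$ a.e.\ on $Q\bigl(\tfrac{R}{2},\tfrac{\rho_0}{2}\theta_0^{-\alpha_0}(\tfrac{R}{2})^2\bigr)$, which is the claim.

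The main obstacle is verifying that all constants produced in the Caccioppoli estimate and the subsequent recursion depend only on the structural data $m$, $b$, $\lambda$, $\Lambda/\theta_0^{\beta}$ and the $L^{\infty}$-norms of $f_1,f_2,f_3$, not on $\omega$ itself. This is precisely where the restriction to the high-value set $\{u>l_i\}\subset\{u>\omega/2\}$ is indispensable: the pointwise lower bound $U^{m-1}\geq c_0\theta_0^{\alpha_0}$ balances the time-derivative weight under the rescaling $z=\theta_0^{\alpha_0}t$ and lets the iteration close with $\omega$-independent constants, mirroring the role played by $U^{m-1}\leq\Lambda^{m-1}$ in the first alternative.
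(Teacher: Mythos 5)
Your proposal is correct and matches what the paper intends: the paper itself only states that Lemma \ref{lem-the-second-alternative-for-holder-estimates} follows by an argument similar to the proof of Lemma \ref{lem-the-first-alternative-for-holder-estimates}, using Lemma \ref{lem-last-condition-for-second-alternativ-from-violate-of-first-alternative} to furnish the small seed, and you have supplied exactly that De Giorgi iteration with upper truncations in place of lower ones. Your observation that on $\{u>l_i\}\subset\{u>\omega/2\}$ Assumption I yields $U^{m-1}\geq c_0\theta_0^{\alpha_0}$ (so that no auxiliary truncation $u_\omega$ is needed here) is precisely the point that makes the second alternative close with the correct intrinsic time scaling.
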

\begin{remark}\label{remark-explain-extension-to-fde-system-on-second-alternative}
Throughout the second alternative, the function $U$ satisfies
\begin{equation*}
0<\frac{\omega}{2}\leq U\leq \Lambda<\infty.
\end{equation*} 
Thus the diffusion coefficients $U^{m-1}$ will still be nondegenerate when $0<m<1$. Therefore the second alternative can be extended to the fast diffusion type system, i.e., the Lemma \ref{lem-the-second-alternative-for-holder-estimates} holds for $0<m<1$.
\end{remark}

\subsection{Local Continuity}

By Lemma \ref{lem-the-first-alternative-for-holder-estimates} and Lemma \ref{lem-the-second-alternative-for-holder-estimates}, we have the following Oscillation Lemma.
\begin{lemma}[Oscillation Lemma]\label{lem-Oscillation-Lemma} 
There exist numbers $\rho_0$, $\sigma_0\in\left(0,1\right)$ depending on the $\frac{\Lambda}{\theta_0^{\,\beta}}$ such that if
\begin{equation*}
\osc_{Q\left(R,\theta_0^{-\alpha_{0}}R^{2}\right)}u=\omega
\end{equation*}
then
\begin{equation}\label{eq-inequality-for-Oscillation-Lemma}
\osc_{Q\left(\frac{R}{2},\frac{\rho_0}{2}\theta_0^{-\alpha_{0}}\left(\frac{R}{2}\right)^{2}\right)}u=\sigma_0\omega.
\end{equation}
\end{lemma}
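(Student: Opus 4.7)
The plan is to obtain the conclusion as an immediate dichotomy based on the two Alternatives already established: Lemma \ref{lem-the-first-alternative-for-holder-estimates} and Lemma \ref{lem-the-second-alternative-for-holder-estimates}. Recall that by the reductions made just before stating the First Alternative we may assume $\mu^-=0$ and $\mu^+=\omega>0$, so $u$ takes values in $[0,\omega]$ on $Q(R,\theta_0^{-\alpha_0}R^2)$. I would then split into two cases according to whether hypothesis \eqref{eq-first-condition-of-small-region-of-lower-for-holder-estimates} holds or fails.

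In \textbf{Case 1}, assume
\begin{equation*}
\bigl|\{(x,t)\in Q(R,\theta_0^{-\alpha_0}R^2):u(x,t)<\tfrac{\omega}{2}\}\bigr|\le \rho_0\,\bigl|Q(R,\theta_0^{-\alpha_0}R^2)\bigr|.
\end{equation*}
Lemma \ref{lem-the-first-alternative-for-holder-estimates} gives $u>\omega/4$ on $Q\bigl(\tfrac{R}{2},\theta_0^{-\alpha_0}(\tfrac{R}{2})^2\bigr)$, hence on the smaller cylinder $Q\bigl(\tfrac{R}{2},\tfrac{\rho_0}{2}\theta_0^{-\alpha_0}(\tfrac{R}{2})^2\bigr)$ (note $\rho_0/2<1$, so this cylinder is nested inside). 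Combined with the upper bound $u\le\omega$, this yields
\begin{equation*}
\osc_{Q(\tfrac{R}{2},\tfrac{\rho_0}{2}\theta_0^{-\alpha_0}(\tfrac{R}{2})^2)}u\le \omega-\tfrac{\omega}{4}=\tfrac{3}{4}\omega.
\end{equation*}

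In \textbf{Case 2}, the hypothesis \eqref{eq-first-condition-of-small-region-of-lower-for-holder-estimates} fails. Then the entire analysis leading to Lemma \ref{lem-the-second-alternative-for-holder-estimates} applies and provides an integer $s^\ast>1$ (depending only on $\Lambda/\theta_0^\beta$) such that
\begin{equation*}
u(x,t)\le \Bigl(1-\tfrac{1}{2^{s^\ast+1}}\Bigr)\omega\qquad\text{a.e.\ on }Q\Bigl(\tfrac{R}{2},\tfrac{\rho_0}{2}\theta_0^{-\alpha_0}(\tfrac{R}{2})^2\Bigr).
\end{equation*}
Using $u\ge 0$ on this cylinder, we conclude
\begin{equation*}
\osc_{Q(\tfrac{R}{2},\tfrac{\rho_0}{2}\theta_0^{-\alpha_0}(\tfrac{R}{2})^2)}u\le \Bigl(1-\tfrac{1}{2^{s^\ast+1}}\Bigr)\omega.
\end{equation*}

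Setting $\sigma_0:=\max\bigl\{\tfrac{3}{4},\,1-\tfrac{1}{2^{s^\ast+1}}\bigr\}\in(0,1)$ then yields \eqref{eq-inequality-for-Oscillation-Lemma} in both cases, and the dependence of $\sigma_0$ on $\Lambda/\theta_0^\beta$ is inherited from that of $s^\ast$ (and $\rho_0$). There is essentially no additional obstacle here since all the analytic work is already packaged inside the two Alternatives; the only subtle point worth flagging is the verification that the cylinder $Q\bigl(\tfrac{R}{2},\tfrac{\rho_0}{2}\theta_0^{-\alpha_0}(\tfrac{R}{2})^2\bigr)$ is contained in the cylinder produced by the First Alternative (which follows from $\rho_0/2<1$) so that a common target cylinder can be used in both cases, together with the observation that the choice of the final radius $R$ must be small enough to satisfy the smallness conditions appearing in \eqref{eq-range-of-R-sufficiently-small-second-2} and \eqref{eq-second-condition-for-range-of-R-by-s-ast} which feed back into the Second Alternative.
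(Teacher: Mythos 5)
Your proposal is correct and coincides with the paper's intended argument: the paper itself introduces the Oscillation Lemma with the single sentence ``By Lemma \ref{lem-the-first-alternative-for-holder-estimates} and Lemma \ref{lem-the-second-alternative-for-holder-estimates}, we have the following Oscillation Lemma,'' so the case split on whether \eqref{eq-first-condition-of-small-region-of-lower-for-holder-estimates} holds or fails, the nesting $Q\bigl(\tfrac{R}{2},\tfrac{\rho_0}{2}\theta_0^{-\alpha_0}(\tfrac{R}{2})^2\bigr)\subset Q\bigl(\tfrac{R}{2},\theta_0^{-\alpha_0}(\tfrac{R}{2})^2\bigr)$, and the choice $\sigma_0=\max\bigl\{\tfrac34,\,1-2^{-(s^\ast+1)}\bigr\}$ (note $\sigma_0$ is in fact $1-2^{-(s^\ast+1)}$ since $s^\ast\ge 2$) reproduce exactly what is implicit there; the equality sign in \eqref{eq-inequality-for-Oscillation-Lemma} is of course to be read as an inequality $\le$, as its use in the iteration proof of Theorem \ref{eq-local-continuity-of-solution} confirms.
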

\begin{proof}[\textbf{Proof of Theorem \ref{eq-local-continuity-of-solution}}]
Let $\left\{\omega_{n}\right\}$ be a  decreasing sequence such that
\begin{equation*}
\omega_{n}=\sigma_0^n\omega_0 \qquad \forall n\in\N.
\end{equation*}
By arguments similar to the proofs of Lemma \ref{lem-Oscillation-Lemma}, we can construct a family of nest and shrinking cylinders $\left\{Q_n\right\}_{n=1}^{\infty}$, whose radius is depending on $\sigma_0^n$, recursively such that 
\begin{equation}\label{eq-iterally-decreasing-of-oscillation-for-continuity-of-solution}
ess\sup_{Q_n}u\leq\omega_n.
\end{equation}
Thus, the continuity of $u$ follows. 
\end{proof}
\begin{remark}\label{remark-last-in-continuity-some-case-just-conti-under-some-condition-up-to-holder-estimate}
Under the \textbf{Assumption I and II}, the constant $\sigma_0$ in \eqref{eq-inequality-for-Oscillation-Lemma} may depend on the oscillation $\frac{\Lambda}{\theta_0^{\,\beta}}$. Thus we can only get the local continuity of $u$ and can't find the modulus of continuity at this stage. See \cite{Ur} for the details.
\end{remark}
By Oscillation Lemma, the shrinking rate of $Q_n$ and decay rate of $\omega_n$ depends only on $\frac{\Lambda}{\theta_0^{\,\beta}}$. Thus those rates are maintained for all $n\in\N$ if there exist some constants $0<c\leq C<\infty$ such that
\begin{equation*}
cu^{\,\beta}\leq U\leq Cu^{\,\beta} \qquad \mbox{in $Q\left(R,R^{2-\epsilon}\right)$}.
\end{equation*}
As a consequence of Theorem \ref{eq-local-continuity-of-solution}, we can find the modulus of continuity of solution $\bold{u}$ of \eqref{eq-pme_system} (H\"older regularity).
\begin{proof}[\textbf{Proof of Theorem \ref{cor-local-holder-estimates-of-u-i-s-in-solution-bold-u}}]
Since 
\begin{equation*}
U=u^1+\cdots+u^k\geq u^i\qquad \forall 1\leq i\leq k,
\end{equation*} 
$u^i$, $\left(1\leq i\leq k\right)$ satisfies the \textbf{Assumption I} with $\beta=1$, $\lambda=1$. By similar argument as in the first alternative, we can set
\begin{equation*}
ess\sup_{Q(R,R^{2-\epsilon})}u^i=\left(\mu^i\right)^+>0, \qquad ess\inf_{Q(R,R^{2-\epsilon})}u^i=\left(\mu^i\right)^-=0, \qquad \forall 1\leq i\leq k
\end{equation*}
and
\begin{equation*}
\omega^i=\osc_{Q(R,R^{2-\epsilon})} u^i=\left(\mu^i\right)^+-\left(\mu^i\right)^-=\left(\mu^i\right)^+,\qquad \forall 1\leq i\leq k.
\end{equation*}
Let 
\begin{equation*}
\omega_{_M}=\max_{1\leq i\leq k}\omega^i\qquad \mbox{and} \qquad \theta_0=\frac{\omega_{_M}}{4}.
\end{equation*}
Then
\begin{equation*}
\osc_{Q\left(R,\theta_0^{-\alpha_{0}}R^{2}\right)}u^i\leq \omega_{_M}=4\theta_0, \qquad \forall 1\leq i\leq k
\end{equation*}
and
\begin{equation*}
U=u^1+\cdots+u^k\leq k\omega_{_M}=4k\theta_0\qquad \mbox{on $Q(R,\theta_0^{-\alpha_0}R^{2})$}.
\end{equation*}
By the arguments similar to the proofs of \textbf{Oscillation Lemma} with $\Lambda$, $\mathcal{A}\left(\nabla u,u,x,t\right)$ and $\mathcal{B}\left(u,x,t\right)$ being replaced by $k\omega_{_M}=4k\theta_0$, $\nabla u$ and $0$, we can choose a constant $\sigma_0$ independent of $\omega^{1}$, $\cdots$, $\omega^k$ such that
\begin{equation*}
\osc_{Q\left(\frac{R}{2},\frac{\rho_0}{2}\theta_0^{-\alpha_{0}}\left(\frac{R}{2}\right)^{2}\right)}u^i=\sigma_0\omega_{_M} \qquad \forall 1\leq i\leq k.
\end{equation*}
Let 
\begin{equation*}
R_1=\frac{R}{\sigma_0^{\frac{\alpha_0}{2}}C}=\frac{R}{\overline{C}}
\end{equation*}
where $C$ is a constant satisfying
\begin{equation*}
C\geq \max\left(\frac{2}{\sqrt{\sigma_0^{\alpha_0}}}\,, \,2\sqrt{\frac{2}{\rho_0\sigma_0^{\alpha_0}}},\frac{1}{\sigma_0^{1+\frac{\alpha_0}{2}}}\right).
\end{equation*}
Then
\begin{equation*}
Q\left(R_1,\theta^{-\alpha_0}R_1^2\right) \subset Q\left(\frac{R}{C},\left(\sigma_0\theta\right)^{-\alpha_0}\left(\frac{R}{C}\right)^2\right) \subset Q\left(\frac{R}{2},\frac{\rho_0}{2}\theta^{-\alpha_0}\left(\frac{R}{2}\right)^2\right).
\end{equation*}
Thus we have
\begin{equation*}
\osc_{Q\left(R_1,\theta^{-\alpha_0}R_1^2\right)}u^i\leq \sigma_0\omega_{_M}, \qquad \forall 1\leq i\leq k.
\end{equation*}
and
\begin{equation*}
U=u^1+\cdots+u^k\leq k\sigma_0\omega_{_M}=4k\sigma_0\theta_0\qquad \mbox{on $Q(R_1,\theta_0^{-\alpha_0}R_1^{2})$}.
\end{equation*}
Thus, applying the \textbf{Oscillation Lemma} again with $\theta_0$, $\Lambda$ being replaced by $\sigma_{0}\theta_0$, $4k\sigma_0\theta_0$ respectively, we can have
\begin{equation}\label{eq-inequality-for-Holder-continuity-first-step}
\osc_{Q\left(\frac{R}{\overline{C}^2},\theta^{-\alpha_0}\left(\frac{R}{\overline{C}^2}\right)^2\right)}u^i=\osc_{Q\left(\frac{R_1}{\sigma_0^{\frac{\alpha_0}{2}}C},\left(\sigma_0^{2}\theta\right)^{-\alpha_0}\left(\frac{R_1}{C}\right)^2\right)}u^i\leq \osc_{Q\left(\frac{R_1}{2},\frac{\rho_0}{2}\left(\sigma_0\theta\right)^{-\alpha_{0}}\left(\frac{R_1}{2}\right)^{2}\right)}u^i=\sigma_0\omega_1=\sigma_0^2\omega_0, \qquad \forall 1\leq i\leq k.
\end{equation}
Continuing this process, we can have 
\begin{equation*}
\osc_{Q\left(\frac{R}{\overline{C}^j},\theta^{-\alpha_0}\left(\frac{R}{\overline{C}^j}\right)^2\right)}u^i\leq\sigma_0^j\omega_0 \qquad \forall j\in\N,\,\,1\leq i\leq k.
\end{equation*}
Therefore, by an argument similar to the proof of Theorem 3.12 of \cite{KL1}
\begin{equation*}
\osc_{Q\left(r,\frac{\rho_0}{2}\theta^{-\alpha_0}r^2\right)}u^i\leq K\omega_{_M}\left(\frac{r}{R}\right)^{\,\beta} \qquad \forall 0<r<R, 1\leq i\leq k.
\end{equation*}
holds for $0<\beta=-\log_{\overline{C}}\sigma_0<1$, $K=\frac{1}{\sigma_0}$ and the corollary follows.
\end{proof}
\begin{remark}
 By Remark \ref{remark-first-alternative-for-fast-diffusion-type-system} and Remark \ref{remark-explain-extension-to-fde-system-on-second-alternative}, the local continuity and local H\"older continuity can be extended to the fast diffusion type system, i.e., Theorem \ref{eq-local-continuity-of-solution} and Theorem \ref{cor-local-holder-estimates-of-u-i-s-in-solution-bold-u} holds for $0<m<1$.
\end{remark}

\section{Asymptotic Behaviour}
\setcounter{equation}{0}
\setcounter{thm}{0}

In this section, we will investigate the uniform convergence between the solution of \eqref{eq-for-each-population-u-i} and Barenblatt profile of porous medium equation. The self-similar Barenblatt solution of the porous medium equation with $L^1$-mass $M$ is given explicitly by
\begin{equation}\label{eq-barenblatt-solution-of-PME}
\mathcal{B}_M(x,t)=t^{-a_1}\left(\mathcal{C}_M-\frac{k|x|^2}{t^{2a_2}}\right)_+^{\frac{1}{m-1}}
\end{equation} 
where 
\begin{equation}\label{eq-constant-alpha-1-beta-1-k}
a_1=\frac{n}{(m-1)n+2}, \qquad a_2=\frac{a_1}{n},\qquad k=\frac{a_1(m-1)}{2mn}.
\end{equation}
Here, the constant $\mathcal{C}_M>0$ is related to the $L^1$-mass $M$ of barenblatt solution. By \cite{Va1}, there exists a constant $c^{\ast}=c^{\ast}(m,n)>0$ such that
\begin{equation}\label{eq-constant-mathcfacl-C-M-by-c-star-sigma}
\mathcal{C}_M=\left(c^{\ast}M^{a_3}\right)^{m-1} \qquad \left(a_3=\frac{2}{n}a_1\right).
\end{equation}
Denote by $\rho_{_M}(t)$ the radius of the support of Barenblatt solution $\mathcal{B}_{M}$ at time $t$, i.e.,
\begin{equation*}
x\in\textbf{supp}\,\mathcal{B}_{M}\left(\cdot,t\right)\qquad \iff \qquad |x|<\sqrt{\frac{\left(c^{\ast}M^{a_3}\right)^{m-1}}{k}}\,t^{\,a_2}=\rho_{_M}(t).
\end{equation*}
Then by an argument similar to the proof of Lemma 3.5 of \cite{KV}, we have the following lemma.
\begin{lemma}\label{lem-Lemma-3-5-of-cite-KV}
$\mathcal{B}_M\left(x,t\right)>\mathcal{B}_M\left(x,t+\tau\right)$ in a region $|x|\leq c\left(\tau,m,n\right)\rho_{_M}(t)$ and $\mathcal{B}_M\left(x,t+\tau\right)>\mathcal{B}_M\left(x,t\right)$ for $c\left(\tau,m,n\right)\rho_{_M}(t)<|x|<\rho_{_{M}}\left(t+\tau\right)$. Moreover 
\begin{equation*}
c\left(\tau,m,n\right)\to c_{\sharp}=\sqrt{\left(m-1\right)a_1}<1 \qquad \mbox{as $\tau\to 0$}. 
\end{equation*}
\end{lemma}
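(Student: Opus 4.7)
The plan is to work directly with the explicit self-similar formula \eqref{eq-barenblatt-solution-of-PME} and reduce the comparison of $\mathcal{B}_M(x,t)$ with $\mathcal{B}_M(x,t+\tau)$ to an elementary algebraic question. Inside the support of $\mathcal{B}_M(\cdot,s)$, raise to the power $m-1$ and set $\alpha:=a_1(m-1)$; the defining formula becomes
\begin{equation*}
\mathcal{B}_M(x,s)^{m-1}=\mathcal{C}_M\, s^{-\alpha}-k|x|^2\, s^{-\alpha-2a_2}=:A(s),
\end{equation*}
where the key identity $\alpha+2a_2=1$, immediate from the definitions of $a_1$ and $a_2$ in \eqref{eq-constant-alpha-1-beta-1-k}, reduces the second exponent to exactly $-1$. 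Hence $A(s)=\mathcal{C}_M s^{-\alpha}-k|x|^2 s^{-1}$, and on the common support the sign of $\mathcal{B}_M(x,t)-\mathcal{B}_M(x,t+\tau)$ agrees with that of $A(t)-A(t+\tau)$.

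The next step is to analyze
\begin{equation*}
A(t)-A(t+\tau)=\mathcal{C}_M\bigl[t^{-\alpha}-(t+\tau)^{-\alpha}\bigr]-k|x|^2\bigl[t^{-1}-(t+\tau)^{-1}\bigr],
\end{equation*}
both bracketed factors being strictly positive. Solving $A(t)=A(t+\tau)$ for $|x|^2$ and dividing by $\rho_M(t)^2=(\mathcal{C}_M/k)t^{2a_2}$ (again using $\alpha+2a_2=1$) gives
\begin{equation*}
\Bigl(\tfrac{|x|}{\rho_M(t)}\Bigr)^2=\frac{1-(1+\tau/t)^{-\alpha}}{1-(1+\tau/t)^{-1}}=:c(\tau,m,n)^2,
\end{equation*}
a quantity in $(\alpha,1)$. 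For $|x|<c(\tau,m,n)\rho_M(t)$ the quantity $A(t)-A(t+\tau)$ is strictly positive, yielding $\mathcal{B}_M(x,t)>\mathcal{B}_M(x,t+\tau)$, while for $c(\tau,m,n)\rho_M(t)<|x|\le\rho_M(t)$ the sign flips. The remaining range $\rho_M(t)<|x|<\rho_M(t+\tau)$ is handled trivially: there $\mathcal{B}_M(x,t)=0<\mathcal{B}_M(x,t+\tau)$.

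For the limit assertion I would apply L'H\^opital's rule once to the defining ratio, obtaining $c(\tau,m,n)^2\to\alpha=a_1(m-1)$ as $\tau\to 0$, and hence $c(\tau,m,n)\to c_\sharp=\sqrt{(m-1)a_1}<1$, the strict inequality following from $\alpha=n(m-1)/[n(m-1)+2]<1$. The main (minor) obstacle is bookkeeping: strictly speaking $c$ also depends on $t$ through the dimensionless ratio $\tau/t$, so the notation $c(\tau,m,n)$ is understood with $t$ implicit (or, more naturally, after the self-similar rescaling $t\mapsto 1$). This dependence is harmless because the convergence $c\to c_\sharp$ is uniform in $t$, which is all that is needed for the applications in Section~4.
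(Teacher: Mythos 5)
Your argument is correct and is exactly the natural computational route one would take (and the route underlying the cited Lemma 3.5 of \cite{KV}): raise to the power $m-1$, exploit the identity $a_1(m-1)+2a_2=1$ to collapse the exponents, compare the resulting two powers of $s$, and solve for the crossover radius. The verification that $c^2$ sits in $(\alpha,1)$ and the L'H\^opital limit $c^2\to\alpha=(m-1)a_1$ are both right, as is your handling of the annulus $\rho_M(t)<|x|<\rho_M(t+\tau)$ where $\mathcal{B}_M(\cdot,t)$ vanishes.

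One small imprecision in your closing remark: $c$ depends on $t$ only through $\tau/t$, and the convergence $c(\tau/t)\to c_\sharp$ as $\tau\to 0$ is \emph{not} uniform over all $t>0$ (for fixed $\tau$, letting $t\to 0^+$ drives $\tau/t\to\infty$ and $c\to 1$). It \emph{is} uniform on $\{t\ge t_0\}$ for any fixed $t_0>0$, and since the lemma is invoked in the paper at a fixed time $t_1>1$ (in the proof of Theorem~\ref{thm-for-uniqueness-of-main-convergence}), pointwise convergence at fixed $t$ is all that is actually used; equivalently, one can normalize $t=1$ by self-similarity and regard $c$ as a function of $\tau$ alone, which is clearly what the paper's notation $c(\tau,m,n)$ intends. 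With that clarification your proof is complete.
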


\subsection{Properties of solutions with Barenblatt solution $\mathcal{B}_M$ as diffusion coefficients}
For any $M\geq M_0>0$, let $w$ be a solution of 
\begin{equation}\label{eq-equation-with-barenblatt-sol-as-diffusion-coefficients}
\begin{aligned}
w_t=\nabla\cdot\left(m\,\mathcal{B}_{M}^{m-1}\nabla w\right) \qquad \forall (x,t)\in\R^n\times\left(0,\infty\right)
\end{aligned}
\end{equation} 
with initial value $w_0\in L^1\left(\R^n\right)$ which satisfies
\begin{equation}\label{eq-uniformly-upper-bound-of-u-by-B-M}
w(x,t)\leq \mathcal{B}_{M}(x,t) \qquad \forall (x,t)\in\R^n\times\left[0,\infty\right)
\end{equation}
and
\begin{equation}\label{eq-mass-conservation-of-solution-of=diffusion-coefficients-B-M}
\int_{\R^n}w(x,t)\,dx=M_0 \qquad  \forall t\geq 0.
\end{equation}

In the following lemma, we find $L^{\infty}$ bounds of solution $u$.
\begin{lemma}\label{lem-uniform-upper-bound-of-souiton-u-with-mass-M-0-by-M-2-times-something}
Let $w$ be a solution of \eqref{eq-equation-with-barenblatt-sol-as-diffusion-coefficients} and \eqref{eq-mass-conservation-of-solution-of=diffusion-coefficients-B-M}. Suppose that 
\begin{equation}\label{eq-suppose-upper-bound-of-u-by-M-1-B-1-x-t}
w(x,t)\leq \frac{M_1}{M}\mathcal{B}_{M}(x,t) \qquad \forall (x,t)\in \R^n\times(0,\infty)
\end{equation}
for any constant $M_0<M_1$. Then there exists a constant $M_2\in\left(M_0,M_1\right)$ such that  
\begin{equation}\label{eq-uniform-upper-bound-of-u-by-M-2-mathcal-B-M-at-zero}
w(x,t)\leq \frac{M_2}{M}\mathcal{B}_{M}\left(0,t\right)=c^{\ast}M_2M^{a_3-1}t^{-a_1}, \qquad \forall t>0
\end{equation}
where constants $a_1$, $a_3$ and $c^{\ast}$ are given by \eqref{eq-constant-alpha-1-beta-1-k} and \eqref{eq-constant-mathcfacl-C-M-by-c-star-sigma}.
\end{lemma}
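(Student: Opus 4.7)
Set $v := \frac{M_1}{M}\mathcal{B}_M - w$. Hypothesis \eqref{eq-suppose-upper-bound-of-u-by-M-1-B-1-x-t} gives $v\geq 0$, and since \eqref{eq-equation-with-barenblatt-sol-as-diffusion-coefficients} is linear with $c\mathcal{B}_M$ a solution for any constant $c$, the function $v$ satisfies the same equation. Combining \eqref{eq-mass-conservation-of-solution-of=diffusion-coefficients-B-M} with mass conservation of $\mathcal{B}_M$ yields $\int_{\R^n} v(\cdot,t)\,dx = M_1-M_0 > 0$ for every $t>0$. The conclusion is equivalent to $v(x,t)\geq \frac{1}{M}\bigl[M_1\mathcal{B}_M(x,t)-M_2\mathcal{B}_M(0,t)\bigr]$, whose right-hand side is automatically nonpositive outside the inner region $\{\mathcal{B}_M(x,t)>(M_2/M_1)\mathcal{B}_M(0,t)\}$. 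Thus the core task is to produce a uniform-in-$t$ positive lower bound on $v$ near $x=0$.

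To that end, the plan is to pass to the self-similar variables $y=x\,t^{-a_2}$, $\tau=\log t$, $\tilde v(y,\tau):=t^{a_1}v(t^{a_2}y,t)$. Using the relation $a_1(m-1)+2a_2=1$, a direct computation produces the autonomous Fokker--Planck--type equation
\begin{equation*}
\tilde v_\tau \;=\; \nabla_y\cdot\bigl(m\,\tilde{\mathcal{B}}_M^{m-1}(y)\,\nabla_y\tilde v + a_2\,y\,\tilde v\bigr), \qquad \tilde{\mathcal{B}}_M(y):=\bigl(\mathcal{C}_M-k|y|^2\bigr)_+^{1/(m-1)},
\end{equation*}
with the pointwise bound $\tilde v \leq \frac{M_1}{M}\tilde{\mathcal{B}}_M$ and the $\tau$-independent mass identity $\int \tilde v\,dy = M_1-M_0$. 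On every closed ball $\overline{B_{r_0}}$ with $r_0<\rho_M(1)$, the coefficient $m\tilde{\mathcal{B}}_M^{m-1}$ is smooth and bounded away from zero, so the equation is uniformly parabolic there.

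Next, choose $r_0\in(0,\rho_M(1))$ large enough that $\int_{B_{r_0/2}}\tilde{\mathcal{B}}_M\,dy \geq M(M_0+M_1)/(2M_1)$; combined with the pointwise bound on $\tilde v$ and mass conservation, this forces $\int_{B_{r_0/2}}\tilde v(\cdot,\tau)\,dy \geq (M_1-M_0)/2$ for every $\tau\in\R$. Moser's parabolic Harnack inequality, applied to the non-negative solution $\tilde v$ on the cylinders $B_{r_0}\times[\tau,\tau+1]$, yields a constant $c_1>0$, independent of $\tau$ by autonomy of the rescaled equation, such that $\tilde v(y,\tau)\geq c_1$ on $B_{r_0/2}$ for all $\tau$. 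Translating back, $v(x,t)\geq c_1\,t^{-a_1}$ whenever $|x|\leq (r_0/2)\,t^{a_2}$ and $t>0$.

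Finally, set
\[ \epsilon := \min\Bigl(\tfrac{c_1 M^{1-a_3}}{c^*},\; M_1\bigl[1-(1-kr_0^2/(4\mathcal{C}_M))^{1/(m-1)}\bigr],\; \tfrac{M_1-M_0}{2}\Bigr) > 0 \]
and $M_2 := M_1-\epsilon \in (M_0,M_1)$. In the inner region $|x|\leq (r_0/2)t^{a_2}$ the Harnack estimate gives $w(x,t) \leq \frac{M_1}{M}\mathcal{B}_M(0,t) - c_1 t^{-a_1} \leq \frac{M_2}{M}\mathcal{B}_M(0,t)$; in the outer region $|x|>(r_0/2)t^{a_2}$, the ratio $\mathcal{B}_M(x,t)/\mathcal{B}_M(0,t)\leq (1-kr_0^2/(4\mathcal{C}_M))^{1/(m-1)}$ together with hypothesis \eqref{eq-suppose-upper-bound-of-u-by-M-1-B-1-x-t} yields the same estimate. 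The main obstacle is securing the uniform-in-$\tau$ Harnack inequality: the coefficient $\tilde{\mathcal{B}}_M^{m-1}$ degenerates at $|y|=\rho_M(1)$, so no Harnack holds up to this boundary -- restricting to the strict interior $B_{r_0}$ and exploiting the autonomy of the rescaled equation in $\tau$ is precisely what converts the local parabolic Harnack constant into a genuinely time-uniform bound.
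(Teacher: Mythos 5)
Your proof is correct, and it takes a genuinely different route from the paper's. The paper argues by the strong maximum principle: if $w(\cdot,1)$ touched $\frac{M_1}{M}\mathcal{B}_M(\cdot,1)$ from below at an interior point of the support, the local uniform parabolicity of \eqref{eq-equation-with-barenblatt-sol-as-diffusion-coefficients} there would force $w\equiv\frac{M_1}{M}\mathcal{B}_M$ on the whole support by a maximal-interval argument, contradicting $M_0<M_1$; since the support is compact this gives a strict gap $M_2(1)<M_1$ at $t=1$, and the paper then invokes the scaling invariance $\widehat w(x,t)=T^{a_1}w(T^{a_2}x,Tt)$ to propagate the bound to all times. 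You instead linearize by passing to $v=\frac{M_1}{M}\mathcal{B}_M-w\ge 0$, pass to self-similar coordinates to turn \eqref{eq-equation-with-barenblatt-sol-as-diffusion-coefficients} into an autonomous Fokker--Planck equation, and use a mass-concentration estimate plus Moser's parabolic Harnack inequality on a fixed interior ball $B_{r_0}$ to extract a $\tau$-independent lower bound $c_1>0$ on $\tilde v$, which you then convert back into the quantitative gap $M_1-M_2=\epsilon$. The decomposition into inner region (where Harnack bites) and outer region (where the radial decay of $\mathcal{B}_M$ bites) is clean and the bookkeeping of constants is right.

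What each approach buys: the paper's proof is shorter and needs only the strong maximum principle and scaling, but the constant $M_2$ it produces at $t=1$ depends a priori on the function $\widehat w(\cdot,1)$, hence on $T$; the paper's final sentence treats this $M_2$ as uniform in $T$ without justification, which is at best a gap that would need a separate compactness argument. Your Harnack-based argument delivers a time-uniform $M_2$ directly, precisely because the rescaled equation is autonomous and the Harnack constant depends only on the ellipticity ratio on $\overline{B_{r_0}}$ and the geometry of the cylinder, not on $\tau$. In this respect your proof is the more robust one. A small notational slip: in the change of variables you should write $\tilde v(y,\tau)=e^{a_1\tau}v(e^{a_2\tau}y,e^\tau)$ rather than in terms of $t$; and when invoking Harnack you should be explicit that it is applied on $B_{r_0}\times[\tau-1,\tau]$ (so that the supremum at time $\tau-3/4$, say, is controlled by the infimum at time $\tau$), and that the needed supremum lower bound at time $\tau-3/4$ comes from the mass estimate $\int_{B_{r_0/2}}\tilde v\,dy\ge (M_1-M_0)/2$ which holds at every $\tau$.
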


\begin{proof}
By an argument similar to the proof of Lemma \ref{eq-same-support-between-U-and-u},
\begin{equation*}
\textbf{supp}\,w(t)=\textbf{supp}\,\mathcal{B}_M(t) \qquad \forall t>0.
\end{equation*}
We first show that $w\left(\cdot,1\right)$ does not touch $\frac{M_1}{M}\mathcal{B}_{M}(\cdot,1)$ from below at any point in $\textbf{supp}\,\mathcal{B}_M(1)$, i.e., for
\begin{equation*}
|x|<\sqrt{\frac{\left(c^{\ast}M^{a_3}\right)^{m-1}}{k}}=\rho_M(1).
\end{equation*}
Suppose that $w(x,1)$ touches $\frac{M_1}{M}\mathcal{B}_M(x,1)$ at a point $x_0$ with $|x_0|<\rho_M(1)$. By radially symmetry and continuity of $\mathcal{B}_M$, there exists a constant $\epsilon_1>0$ such that
\begin{equation*}
E_{\epsilon_1}=\left\{x\in\R^n:|x|\leq |x_0|+\epsilon_1\right\}\times\left[1-\epsilon_1^2,1\right]\subset \left\{(x,t)\in\R^n\times[0,\infty):\mathcal{B}_M(x,t)>0\right\}.
\end{equation*}
On $E_{\epsilon_1}$, there exists constant $0<c<C<\infty$ such that
\begin{equation*}
c\leq \mathcal{B}_M(x,t)\leq C \qquad \forall (x,t)\in E_{\epsilon_1}.
\end{equation*}
Thus, the equation \eqref{eq-equation-with-barenblatt-sol-as-diffusion-coefficients} is uniformly parabolic on $E_{\epsilon_1}$. Therefore the function $w-\frac{M_1}{M}\mathcal{B}_M$ is the classical solution of \eqref{eq-equation-with-barenblatt-sol-as-diffusion-coefficients} on $E_{\epsilon}$ which has its maximum at the point $(x_0,1)$ inside of $E_{\epsilon_1}$ by \eqref{eq-suppose-upper-bound-of-u-by-M-1-B-1-x-t}. By Strong Maximum Principle,
\begin{equation}\label{eq-equivalent-btween-u-and-M-1-mathcal-B-1-on-large-portion-of-support}
w(x,1)\equiv \frac{M_1}{M}\mathcal{B}_M(x,1) \qquad \forall 0\leq |x|\leq |x_0|+\epsilon_1.
\end{equation}
By maximal interval argument, \eqref{eq-equivalent-btween-u-and-M-1-mathcal-B-1-on-large-portion-of-support} can be extend to the support of $\mathcal{B}_M(1)$. Since 
\begin{equation*}
\int_{\R^n}\frac{M_1}{M}\mathcal{B}_M(x,1)\,dx=M_1\neq M_0=\int_{\R^n}w(x,1)\,dx,
\end{equation*} 
the contradiction arises and the claim follows. \\
\indent By the claim, $w(x,1)<\frac{M_1}{M}\mathcal{B}_M(x,1)\leq \frac{M_1}{M}\mathcal{B}_M(0,1)=c^{\ast}M_1M^{a_3-1}$ for all $x\in\textbf{supp}\,\mathcal{B}_M(1)$. Hence there exists a constant $M_2\in\left(M_0,M_1\right)$ such that 
\begin{equation}\label{eq-uniform-bound-of-u-at-0-1-by-c-sast-M-2}
w(x,1)\leq c^{\ast}M_2M^{a_3-1} \qquad \forall x\in\R^n.
\end{equation} 
To prove \eqref{eq-uniform-upper-bound-of-u-by-M-2-mathcal-B-M-at-zero}, we consider the rescaled function
\begin{equation*}
\widehat{w}(x,t)=T^{a_1}w\left(T^{a_2}x,Tt\right), \qquad \left(T>0\right).
\end{equation*} 
Since 
\begin{equation*}
\mathcal{B}_M(x,t)=T^{a_1}\mathcal{B}_M\left(T^{a_2}x,Tt\right),
\end{equation*}
the function $\widehat{w}$ is a solution of \eqref{eq-equation-with-barenblatt-sol-as-diffusion-coefficients} which satisfies \eqref{eq-mass-conservation-of-solution-of=diffusion-coefficients-B-M} and \eqref{eq-suppose-upper-bound-of-u-by-M-1-B-1-x-t}. Then, by an argument for \eqref{eq-uniform-bound-of-u-at-0-1-by-c-sast-M-2} we have
\begin{equation*}
w(x,T)=\frac{1}{T^{a_1}}\widehat{w}\left(\frac{x}{T^{a_2}},1\right)\leq c^{\ast}M_2M^{a_3-1}T^{-a_1} \qquad \forall x\in\R^n
\end{equation*}
and the lemma follows.
\end{proof}

By \eqref{eq-uniformly-upper-bound-of-u-by-B-M} and \eqref{eq-mass-conservation-of-solution-of=diffusion-coefficients-B-M}, there exists a constant $M_0\leq M'\leq M$ such that
\begin{equation}\label{eq-upper-bound-of-u-by-smaller-one-of-B-M}
w(x,t)\leq \frac{M'}{M}\mathcal{B}_M(x,t) \qquad \forall (x,t)\in\R^n\times\left(0,\infty\right).
\end{equation}
We now consider the infimum of these bounds
\begin{equation}\label{eq-overline-M-as-infimum-of-upper-bound-by-M-mathcal-B-1}
\overline{M}=\inf\left\{M':w(x,t)\leq \frac{M'}{M}\mathcal{B}_M(x,t)\right\}.
\end{equation}
We now are going to prove that $\overline{M}=M_0$.
\begin{thm}[Uniqueness]\label{thm-for-uniqueness-of-main-convergence}
Let $0<M_0\leq M$. Let $w$ be non-negative solution of \eqref{eq-equation-with-barenblatt-sol-as-diffusion-coefficients} which satisfies \eqref{eq-uniformly-upper-bound-of-u-by-B-M} and \eqref{eq-mass-conservation-of-solution-of=diffusion-coefficients-B-M}. Then 
\begin{equation}\label{eq-uniqueness-of-u-and-M-0-mathcal-B-1}
w=\frac{M_0}{M}\mathcal{B}_{M} \qquad \mbox{a.e. in $\R^n\times(0,\infty)$}.
\end{equation}
\end{thm}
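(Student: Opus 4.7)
Integrating the pointwise bound $w \leq \tfrac{\overline{M}}{M}\mathcal{B}_M$ and using mass conservation \eqref{eq-mass-conservation-of-solution-of=diffusion-coefficients-B-M} immediately gives $\overline{M} \geq M_0$, so \eqref{eq-uniqueness-of-u-and-M-0-mathcal-B-1} is equivalent to $\overline{M} = M_0$. I argue by contradiction, assuming $\overline{M} > M_0$; the goal is to exhibit $M' \in (M_0, \overline{M})$ for which $w \leq \tfrac{M'}{M}\mathcal{B}_M$, contradicting the infimum property \eqref{eq-overline-M-as-infimum-of-upper-bound-by-M-mathcal-B-1}. The first input is Lemma \ref{lem-uniform-upper-bound-of-souiton-u-with-mass-M-0-by-M-2-times-something} applied with $M_1 = \overline{M}$, which produces $M_2 \in (M_0, \overline{M})$ and the uniform pointwise estimate
$$w(x, t) \leq c^\ast M_2 M^{a_3-1} t^{-a_1} = \tfrac{M_2}{M}\mathcal{B}_M(0, t), \qquad \forall (x, t) \in \R^n\times(0,\infty).$$

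The key intermediate step is to combine this uniform bound with the shifted Barenblatt comparison in Lemma \ref{lem-Lemma-3-5-of-cite-KV} to show that, for a sufficiently small $\tau > 0$ and a suitable $M' \in (M_2, \overline{M})$,
$$w(x, t) \leq \tfrac{M'}{M}\mathcal{B}_M(x, t+\tau), \qquad \forall (x, t) \in \R^n\times(0,\infty).$$
I verify this region by region. In the deep interior $|x| \leq c(\tau)\rho_M(t)$, the uniform bound meets the lower bound $\mathcal{B}_M(x, t+\tau) \geq \mathcal{B}_M(c(\tau)\rho_M(t), t+\tau) = c^\ast M^{a_3} t^{-a_1}(1 - c(\tau)^2)^{1/(m-1)}$, and the inequality reduces to the quantitative condition $M_2 \leq M'(1 - c(\tau)^2)^{1/(m-1)}$. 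In the outer annulus $c(\tau)\rho_M(t) < |x| < \rho_M(t)$, Lemma \ref{lem-Lemma-3-5-of-cite-KV} gives $\mathcal{B}_M(x, t+\tau) > \mathcal{B}_M(x, t)$, so the original bound $w \leq \tfrac{\overline{M}}{M}\mathcal{B}_M(x, t)$ leaves room to pick $M' < \overline{M}$. In the outer sleeve $\rho_M(t) \leq |x| < \rho_M(t+\tau)$ one has $w(x, t) = 0$ by \eqref{eq-uniformly-upper-bound-of-u-by-B-M} and $\textbf{supp}\,w(t) \subset \textbf{supp}\,\mathcal{B}_M(t)$.

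The shift is then removed by exploiting the scale invariance of the setup. For each $\lambda > 0$, $w_\lambda(x, t) := \lambda^{a_1}w(\lambda^{a_2}x, \lambda t)$ is again a solution of \eqref{eq-equation-with-barenblatt-sol-as-diffusion-coefficients} satisfying \eqref{eq-uniformly-upper-bound-of-u-by-B-M} and \eqref{eq-mass-conservation-of-solution-of=diffusion-coefficients-B-M} with the same $M_0$, and a direct change of variables shows that $w_\lambda \leq \tfrac{M'}{M}\mathcal{B}_M$ if and only if $w \leq \tfrac{M'}{M}\mathcal{B}_M$, so $\overline{M}_{w_\lambda} = \overline{M}$ and the same $M_2$ serves $w_\lambda$. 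The previous step therefore applies to each $w_\lambda$ with the same pair $(M', \tau)$, and the self-similarity identity $\lambda^{-a_1}\mathcal{B}_M(\lambda^{-a_2}y, s/\lambda + \tau) = \mathcal{B}_M(y, s + \lambda\tau)$ converts the conclusion into $w(y, s) \leq \tfrac{M'}{M}\mathcal{B}_M(y, s + \lambda\tau)$ for every $\lambda > 0$. Sending $\lambda \to 0^+$ and using continuity of $\mathcal{B}_M$ in $t$ yields $w(y, s) \leq \tfrac{M'}{M}\mathcal{B}_M(y, s)$ with $M' < \overline{M}$, the desired contradiction. The main obstacle is the quantitative matching in the deep interior: since Lemma \ref{lem-uniform-upper-bound-of-souiton-u-with-mass-M-0-by-M-2-times-something} only guarantees $M_2 \in (M_0, \overline{M})$ without control of how close $M_2$ sits to $\overline{M}$, the constraint $M_2/\overline{M} < (1 - c(\tau)^2)^{1/(m-1)}$ — where $c(\tau) \to c_\sharp < 1$ as $\tau \to 0$ — may fail in a single pass. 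This should be repaired by a bootstrap, iterating Lemma \ref{lem-uniform-upper-bound-of-souiton-u-with-mass-M-0-by-M-2-times-something} with successively improved $M_1$, or by a quantitative strong maximum principle applied to the non-negative solution $\tfrac{\overline{M}}{M}\mathcal{B}_M - w$ of \eqref{eq-equation-with-barenblatt-sol-as-diffusion-coefficients} to lower $M_2$ below the required threshold.
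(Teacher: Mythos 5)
Your proposal captures the correct skeleton of the argument — define $\overline{M}$, argue by contradiction, invoke Lemma~\ref{lem-uniform-upper-bound-of-souiton-u-with-mass-M-0-by-M-2-times-something} to get the flat $t^{-a_1}$ bound, compare with a time-shifted Barenblatt via Lemma~\ref{lem-Lemma-3-5-of-cite-KV}, and finally remove the shift by the scaling $\theta^{-a_1}(\cdot)(\theta^{-a_2}x,\theta^{-1}t)$ and $\theta\to 0$ — and you are honest that the ``key intermediate step'' is not closed. But the obstruction is actually worse than the single deep-interior constraint you flag. Your region-by-region pointwise comparison also fails at the transition radius $|x|=c(\tau)\rho_M(t)$ \emph{from the outer side}: there Lemma~\ref{lem-Lemma-3-5-of-cite-KV} only gives $\mathcal{B}_M(x,t+\tau)\geq\mathcal{B}_M(x,t)$ with equality at the transition, so the bound $w\leq\tfrac{\overline M}{M}\mathcal{B}_M(x,t)$ does \emph{not} leave room to replace $\overline M$ by any $M'<\overline M$ near that circle — the ratio $\mathcal{B}_M(x,t+\tau)/\mathcal{B}_M(x,t)\to 1$ as $|x|\to c(\tau)\rho_M(t)^+$. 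Of your two proposed repairs, the bootstrap is circular: Lemma~\ref{lem-uniform-upper-bound-of-souiton-u-with-mass-M-0-by-M-2-times-something} requires $w\leq\tfrac{M_1}{M}\mathcal{B}_M$ pointwise as hypothesis, so you cannot re-apply it with $M_1<\overline M$ without already having proved the improved pointwise bound, which is exactly what you are after.

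The paper sidesteps the quantitative pointwise matching entirely by an evolve-then-compare device that is qualitatively different from your static comparison. It defines $W$ as the solution of \eqref{eq-equation-with-barenblatt-sol-as-diffusion-coefficients} launched at $t=1$ from the truncated initial data $W(x,1)=\min\bigl\{c^\ast\widetilde M M^{a_3-1},\ \tfrac{\overline M}{M}\mathcal{B}_M(x,1)\bigr\}$, so that $w\leq W\leq\tfrac{\overline M}{M}\mathcal{B}_M$ by the maximum principle. Since $W(0,1)<\tfrac{\overline M}{M}\mathcal{B}_M(0,1)$ strictly and the equation is uniformly parabolic on compact subsets of $\{\mathcal{B}_M>0\}$, the strong maximum principle (the same soft argument as in Lemma~\ref{lem-uniform-upper-bound-of-souiton-u-with-mass-M-0-by-M-2-times-something}) gives a later time $t_1>1$ at which $W(\cdot,t_1)<\tfrac{\overline M}{M}\mathcal{B}_M(\cdot,t_1)$ \emph{strictly} on all of $\{|x|<\rho_M(t_1)\}$. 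This strict separation on a compact set — not any explicit constant — is what makes the shifted comparison $W(\cdot,t_1)<\tfrac{\overline M}{M}\mathcal{B}_M(\cdot,t_1+\tau)$ hold after choosing $\tau,\delta$ small, and compactness of the ratio $W/\mathcal{B}_M(\cdot,t_1+\tau)$ (which tends to $0$ at the free boundary $|x|=\rho_M(t_1)$) then yields an $\epsilon>0$ with $W(\cdot,t_1)\leq\tfrac{\overline M-\epsilon}{M}\mathcal{B}_M(\cdot,t_1+\tau)$. Another payoff of using $W$ rather than $w$ directly is that the rescaled barrier $W_\theta$ has the explicit initial data $\min\bigl\{c^\ast\widetilde M M^{a_3-1}\theta^{-a_1},\tfrac{\overline M}{M}\mathcal{B}_M(\cdot,\theta)\bigr\}$ at time $\theta$, so the comparison $w\leq W_\theta$ at $t=\theta$ is immediate from the uniform bound and $w\leq\tfrac{\overline M}{M}\mathcal{B}_M$, and the shift-removal limit $\theta\to0$ goes through cleanly. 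I recommend replacing the direct radial case analysis with this barrier construction; the scaling step at the end of your sketch is then correct and essentially as in the paper.
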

\begin{proof}
We will use a modification of the techniques of Lemma 3.5 of \cite{KV} to prove theorem. By \eqref{eq-upper-bound-of-u-by-smaller-one-of-B-M} and \eqref{eq-overline-M-as-infimum-of-upper-bound-by-M-mathcal-B-1},
\begin{equation}\label{eq-property-overline-M-as-infimum-of-upper-bound-by-M-mathcal-B-1}
\overline{M}\geq M_0 \qquad \mbox{and} \qquad w\leq \frac{\overline{M}}{M}\mathcal{B}_M \quad \mbox{in $\R^n\times(0,\infty)$}.
\end{equation}
Suppose that $\overline{M}>M_0$. By Lemma \ref{lem-uniform-upper-bound-of-souiton-u-with-mass-M-0-by-M-2-times-something}, there exists a constant $\widetilde{M}\in\left(M_0,\overline{M}\right)$ such that
\begin{equation*}
w(x,t)\leq c^{\ast}\widetilde{M}M^{a_3-1}t^{-a_1} \qquad \forall \left(x,t\right)\in\R^n\times(0,\infty).
\end{equation*}
Let 
\begin{equation*}
W\left(x,1\right)=\min\left\{c^{\ast}\widetilde{M}M^{a_3-1},\frac{\overline{M}}{M}\mathcal{B}_M(x,1)\right\} \qquad \forall x\in\R^n
\end{equation*}
and $W$ be the solution of \eqref{eq-equation-with-barenblatt-sol-as-diffusion-coefficients} in $\R^n\times(1,\infty)$ with initial data $W(x,1)$ at time $t=1$. By maximum principle,
\begin{equation*}
w(x,t)\leq W(x,t)\leq \frac{\overline{M}}{M}\mathcal{B}_M(x,t) \qquad \forall (x,t)\in\R^n\times\left[1,\infty\right).
\end{equation*}
Since $W(0,1)=c^{\ast}\widetilde{M}M^{a_3-1}$ is strictly less than $\frac{\overline{M}}{M}\mathcal{B}_M(0,1)=c^{\ast}\overline{M}M^{a_3-1}$, by an argument similar to the proof of Lemma \ref{lem-uniform-upper-bound-of-souiton-u-with-mass-M-0-by-M-2-times-something} there exists a constant $t_1>1$ such that
\begin{equation}\label{eq-striclty-less-of-U-than-mathcal-B-1-in-support}
W\left(x,t_1\right)<\frac{\overline{M}}{M}\mathcal{B}_M\left(x,t_1\right) \qquad \forall |x|<\rho_1\left(t_1\right).
\end{equation}
By \eqref{eq-striclty-less-of-U-than-mathcal-B-1-in-support}, $W(\cdot,t_1)$ and $\frac{\overline{M}}{M}\mathcal{B}_M(\cdot,t_1)$ are strictly separated on the compact subset of $\textbf{supp}\,\mathcal{B}_M(t_1)$. Hence by Lemma \ref{lem-Lemma-3-5-of-cite-KV}, there exist constants $\delta>0$ and $\tau>0$ small enough that
\begin{equation}\label{eq-compare-U-and-mathcal-B-of-t-tau-strictly-inside-1}
W(x,t_1)<\frac{\overline{M}}{M}\mathcal{B}_M\left(x,t_1+\tau\right) \qquad \forall |x|\leq c_{\sharp}\rho_{1}\left(t_1\right)+\delta.
\end{equation}
On the other hand,
\begin{equation}\label{eq-compare-U-and-mathcal-B-of-t-tau-strictly-inside-2}
W(x,t_1)\leq \frac{\overline{M}}{M}\mathcal{B}_M(x,t_1)<\overline{M}\mathcal{B}_1\left(x,t_1+\tau\right) \qquad \forall c_{\sharp}\rho_{1}\left(t_1\right)+\delta\leq |x|\leq \rho_{1}\left(t_1+\tau\right).
\end{equation}
By \eqref{eq-compare-U-and-mathcal-B-of-t-tau-strictly-inside-1} and \eqref{eq-compare-U-and-mathcal-B-of-t-tau-strictly-inside-2},
\begin{align}
&W(x,t_1)<\frac{\overline{M}}{M}\mathcal{B}_M\left(x,t_1+\tau\right) \qquad \forall |x|\leq \rho_{1}\left(t_1+\tau\right)\notag\\
&\qquad \Rightarrow \qquad W(x,t_1)\leq\frac{\left(\overline{M}-\epsilon\right)}{M}\mathcal{B}_M\left(x,t_1+\tau\right) \qquad \forall x\in\R^n\label{eq-compare-U-and-mathcal-B-of-t-tau-times-minus-epsilon-at-t-1}
\end{align}
for sufficiently small constant $\epsilon>0$.  By \eqref{eq-compare-U-and-mathcal-B-of-t-tau-times-minus-epsilon-at-t-1} and maximum principle,
\begin{equation}\label{eq-compare-U-and-mathcal-B-of-t-tau-times-minus-epsilon}
W(x,t)\leq\frac{\left(\overline{M}-\epsilon\right)}{M}\mathcal{B}_M\left(x,t+\tau\right) \qquad \forall x\in\R^n\,\,t\geq t_1.
\end{equation}
Since $w\leq W$ for $t\geq 1$, by \eqref{eq-compare-U-and-mathcal-B-of-t-tau-times-minus-epsilon}
\begin{equation}\label{eq-compare-u-and-mathcal-B-of-t-tau-times-minus-epsilon}
w(x,t)\leq\frac{\left(\overline{M}-\epsilon\right)}{M}\mathcal{B}_M\left(x,t+\tau\right) \qquad \forall x\in\R^n\,\,t\geq t_1.
\end{equation}
We now consider the rescaled function
\begin{equation}\label{eq-rescaling-with-theta-minus-polwer}
W_{\theta}(x,t)=\frac{1}{\theta^{a_1}}W\left(\frac{x}{\theta^{a_2}},\frac{t}{\theta}\right)
\end{equation}
where constants $a_1$ and $a_2$ are given by \eqref{eq-constant-alpha-1-beta-1-k}. Then, $W_{\theta}$ is a solution of \eqref{eq-equation-with-barenblatt-sol-as-diffusion-coefficients} in $\R^n\times\left(\theta,\infty\right)$ which satisfies on the initial data
\begin{equation*}
W_{\theta}\left(x,\theta\right)=\min\left\{c^{\ast}\widetilde{M}M^{a_3-1}\theta^{-a_1},\frac{\overline{M}}{M}\mathcal{B}_M(x,\theta)\right\} \qquad \forall x\in\R^n
\end{equation*}
since $\mathcal{B}_M$ is invariant under the rescaling \eqref{eq-rescaling-with-theta-minus-polwer}. Since 
\begin{equation*}
w(x,t)\leq W_{\theta}(x,t) \qquad \forall x\in\R^n,\,\,t\geq \theta t_1,
\end{equation*}
by an argument similar to the proof of  \eqref{eq-compare-u-and-mathcal-B-of-t-tau-times-minus-epsilon},
\begin{equation}\label{eq-compare-u-and-mathcal-B-of-t-tau-times-minus-epsilon-for-upper-theta-t-1}
w(x,t)\leq\frac{\left(\overline{M}-\epsilon\right)}{M}\mathcal{B}_M\left(x,t+\theta\tau\right) \qquad \forall x\in\R^n,\,\,t\geq \theta t_1.
\end{equation}
Letting $\theta\to 0$ in \eqref{eq-compare-u-and-mathcal-B-of-t-tau-times-minus-epsilon-for-upper-theta-t-1},
\begin{equation}\label{eq-compare-u-and-mathcal-B-of-t-tau-times-minus-epsilon-for-upper-zero}
w(x,t)\leq\frac{\left(\overline{M}-\epsilon\right)}{M}\mathcal{B}_M\left(x,t\right) \qquad \forall x\in\R^n,\,\,t>0.
\end{equation}
Hence contradiction arises and $\overline{M}=M_0$. By \eqref{eq-property-overline-M-as-infimum-of-upper-bound-by-M-mathcal-B-1},
\begin{equation*}
0\leq w(x,t)\leq \frac{M_0}{M}\mathcal{B}_M(x,t) \quad \forall (x,t)\in\R^n\times(0,\infty). 
\end{equation*}
Since $w$ has $L^1$ mass $M_0$, \eqref{eq-uniqueness-of-u-and-M-0-mathcal-B-1} holds and the theorem follows.
\end{proof}

\subsection{Convergence of $U$}

Let $M$ be the $L^1$-mass of solution $U$ of \eqref{eq-PME-satisfied-by-U-total-species}. By \cite{LV} and \cite{Va1}, it is well known that there exists the uniform convergences between $U$ and Barenblatt profile $\mathcal{B}_M$.
\begin{lemma}[cf. Theorem 2.8 of \cite{LV}]\label{lem-Theorem-2-8-of-LV}
Let $U$ be the solution of \eqref{eq-PME-satisfied-by-U-total-species} with initial data $U_0$ nonnegative, integrable and compactly supported. Let $M=\int_{\R^n}U_0(x)\,dx$. Then
\begin{equation*}
\lim_{t\to\infty}\left\|U\left(\cdot,t\right)-\mathcal{B}_{M}\left(\cdot,t\right)\right\|_{L^1}=0
\end{equation*}
Convergence holds also in uniform norm in the proper scale:
\begin{equation}\label{eq-L-infty-convergence-between-U-and-Barenblatt-solution}
\lim_{t\to\infty}t^{\alpha_1}\left\|U\left(\cdot,t\right)-\mathcal{B}_{M}\left(\cdot,t\right)\right\|_{L^{\infty}}=0 \qquad \mbox{uniformly $x\in\R^n$}.
\end{equation}
\end{lemma}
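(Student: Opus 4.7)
The plan is to follow the classical scaling and compactness approach of Friedman--Kamin, exploiting the invariance of the porous medium equation under the Barenblatt parabolic scaling. For $\lambda>0$, introduce
\begin{equation*}
U_\lambda(x,t)=\lambda^{a_1}\,U\bigl(\lambda^{a_2}x,\lambda t\bigr),
\end{equation*}
with $a_1,a_2$ as in \eqref{eq-constant-alpha-1-beta-1-k}. A direct check using the identity $(m-1)a_1+2a_2=1$ shows that each $U_\lambda$ is still a solution of \eqref{eq-PME-satisfied-by-U-total-species} with initial datum $U_{0,\lambda}(x)=\lambda^{a_1}U_0(\lambda^{a_2}x)$, and the relation $a_1=na_2$ together with Lemma~\ref{lem-Mass-conservation-of-PME} gives $\|U_\lambda(\cdot,t)\|_{L^1}=M$ for every $\lambda,t>0$. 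Since $\mathcal{B}_M$ is itself a fixed point of this rescaling, the claimed asymptotics are equivalent to
\begin{equation*}
U_\lambda(\cdot,1)\to\mathcal{B}_M(\cdot,1)\quad\text{in}\;L^1(\R^n)\cap L^\infty(\R^n)\quad\text{as}\;\lambda\to\infty,
\end{equation*}
because unwinding the scaling yields $t^{a_1}\|U(\cdot,t)-\mathcal{B}_M(\cdot,t)\|_{L^\infty}=\|U_t(\cdot,1)-\mathcal{B}_M(\cdot,1)\|_{L^\infty}$ and the analogous scale-invariant identity for the $L^1$ norm.

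To prove the rescaled convergence, I would first establish uniform compactness of $\{U_\lambda\}$ on $\R^n\times(0,\infty)$. The decay estimate of Lemma~\ref{lem-cf-chapter-9-of-Va1} provides $\|U_\lambda(\cdot,t)\|_{L^\infty}\leq C\,t^{-a_1}$ uniformly in $\lambda$; the Aronson--Caffarelli finite-propagation-speed estimate for the PME furnishes a $\lambda$-independent bound $\supp U_\lambda(\cdot,t)\subset B_{R(M,t)}(0)$, which is the source of $L^1$-tightness; and the intrinsic H\"older regularity for the PME (DiBenedetto, or equivalently Theorem~\ref{eq-local-continuity-of-solution} specialized to $u=U$) gives uniform equicontinuity on compact subsets of $\R^n\times(0,\infty)$. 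Arzel\`a--Ascoli together with a diagonal extraction then produces, for any sequence $\lambda_k\to\infty$, a subsequence along which $U_{\lambda_k}\to V$ locally uniformly and, by the support bound, in $L^1(\R^n\times(\tau,T))$ for every $0<\tau<T<\infty$. Passing to the limit in the weak formulation shows $V$ is a nonnegative weak PME solution on $\R^n\times(0,\infty)$ of mass $M$. Finally, because $U_{0,\lambda}$ has fixed $L^1$ mass $M$ while its support has radius $O(\lambda^{-a_2})\to 0$, one has $U_{0,\lambda}\to M\delta_0$ distributionally, and a standard argument in the weak formulation (combined with the uniform support bound) transports this to $V(\cdot,t)\rightharpoonup M\delta_0$ as $t\to 0^+$. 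By uniqueness of the fundamental solution of the PME (Pierre's theorem), $V\equiv\mathcal{B}_M$; since every subsequential limit coincides with $\mathcal{B}_M$, the full family $U_\lambda(\cdot,1)$ converges, and the theorem follows.

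The main obstacle is identifying the initial trace of the limit $V$ as the Dirac mass $M\delta_0$ and then invoking uniqueness for the singular Cauchy problem of a degenerate equation. Classical parabolic uniqueness fails at points where $V=0$, so one must either invoke Pierre's uniqueness theorem for nonnegative weak solutions with a Radon-measure initial datum or argue through the concentration-comparison for radially rearranged PME solutions in the style of V\'azquez. All remaining ingredients---mass conservation, finite propagation, equicontinuity, and the scaling algebra---are standard features of the PME once those two points are secured.
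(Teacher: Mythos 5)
Your proposal is the standard Friedman--Kamin four-step scaling argument (rescale, obtain compactness via the $L^\infty$ smoothing estimate, finite propagation, and intrinsic H\"older regularity, identify the limit as $\mathcal{B}_M$ through the initial Dirac trace and uniqueness of the fundamental solution, then unwind the scaling), which is precisely the method behind the cited Theorem~2.8 of \cite{LV} and Theorem~18.1 of \cite{Va1}, and is also the method the paper itself adapts in the proof of Theorem~\ref{eq-convergence-in-L-1-and-L-infty-of-u-to-some-constant-Barenblatt}. The paper does not reprove this lemma but simply cites it; your argument is correct and essentially the same route.
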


\subsection{Scaling and Uniform estimates}
Let $u$, $U$ be solutions of \eqref{eq-for-each-population-u-i}, \eqref{eq-PME-satisfied-by-U-total-species} with $L^1$-mass $M_0$, $M$, respectively. Construct the families of functions
\begin{equation}\label{eq-scaling-of-u-and-U-by-lambda-0}
u_{\lambda}\left(x,t\right)=\lambda^{a_1}u\left(\lambda^{a_2}x,\lambda t\right) \qquad \mbox{and} \qquad U_{\lambda}\left(x,t\right)=\lambda^{a_1}U\left(\lambda^{a_2}x,\lambda t\right)\qquad \left(\lambda>0\right)
\end{equation}
where the exponents $a_1$ and $a_2$ are given by \eqref{eq-constant-alpha-1-beta-1-k}. Then by \eqref{eq-for-each-population-u-i} and \eqref{eq-basic-property-of-u-smaller-than-U}, $u_{\lambda}$ are solutions of 
\begin{equation}\label{eq-for-each-population-u-lambda-after-rescaling}
\begin{cases}
\begin{aligned}
\left(u_{\lambda}\right)_t&=\nabla\cdot\left(m\,U_{\lambda}^{m-1}\nabla u_{\lambda}\right)\qquad \qquad\mbox{in $\R^n\times(0,\infty)$}\\
u_{\lambda}(x,0)&=u_{0}\left(\lambda^{a_2}x\right)=u_{0,\lambda}(x)\qquad \qquad \qquad \qquad\forall x\in\R^n
\end{aligned}
\end{cases}
\end{equation}
which satisfies 
\begin{equation}\label{eq-basic-condition-between-u-lambda-and-U-lambda}
0\leq u_{\lambda}(x,t)\leq U_{\lambda}(x,t) \qquad \forall (x,t)\in\R^n\times[0,\infty).
\end{equation}
By Lemma \ref{lem-Mass-conservation-of-PME} and Lemma \ref{lem-Mass-conservation-of-u},
\begin{equation}\label{eq-mass-conservation-of-U-lambda}
\int_{\R^n}U_{\lambda}\left(x,t\right)\,dx=\int_{\R^n}\lambda^{a_1}U\left(\lambda^{a_2}x,\lambda t\right)\,dx=\int_{\R^n}U\left(y,\lambda t\right)\,dy=M<\infty\qquad \forall \lambda>0,\,\,t\geq 0
\end{equation}
and
\begin{equation}\label{eq-general-L-1-bound-of-u-lambda-at-each-time-t-except-zero}
\int_{\R^n}u_{\lambda}\left(x,t\right)\,dx=\int_{\R^n}\lambda^{a_1}u\left(\lambda^{a_2}x,\lambda t\right)\,dx=\int_{\R^n}u\left(y,\lambda t\right)\,dy=M_0<\infty\qquad \forall \lambda>0,\,\,t\geq 0.
\end{equation}
Hence the family $\left\{u_{\lambda}\right\}_{\lambda\geq 1}$ is uniformly bounded in $L^1\left(\R^n\right)$ for all $t>0$. By \eqref{eq-L-infty-bound-of-diffusion-coefficients-U-by-L-infty-of-U-0-with-positive-t} and \eqref{eq-basic-property-of-u-smaller-than-U},
\begin{equation*}
\left\|u_{\lambda}\left(\cdot,1\right)\right\|_{L^{\infty}}\leq \left\|U_{\lambda}\left(\cdot,1\right)\right\|_{L^{\infty}}=\lambda^{a_1}\left\|U\left(\cdot,\lambda\right)\right\|_{L^{\infty}}\leq\lambda^{a_1}\frac{C\left\|U_0\right\|_{L^1}^{\frac{2a_1}{n}}}{\lambda^{a_1}}=CM^{\frac{2a_1}{n}}
\end{equation*}
which is independent to $\lambda$. Similarly,
\begin{equation}\label{eq-general-L-p-bound-of-u-lambda-at-each-time-t-except-zero}
\left\|u_{\lambda}\left(\cdot,t_0\right)\right\|_{L^{\infty}}\leq CM^{\frac{2a_1}{n}}t_0^{-a_1} \qquad \forall t_0>0.
\end{equation}
By \eqref{eq-general-L-1-bound-of-u-lambda-at-each-time-t-except-zero}, \eqref{eq-general-L-p-bound-of-u-lambda-at-each-time-t-except-zero} and Interpolation theory, 
\begin{equation}\label{eq-equi-bound-of-u-lambda-in-L-p}
\mbox{$\left\|u_{\lambda}\left(\cdot,t\right)\right\|_{L^p}$ is equibounded for all $p\in\left[1,\infty\right]$.}
\end{equation}
By Lemma \ref{lem-Theorem-2-8-of-LV} and \eqref{eq-mass-conservation-of-U-lambda}, there exists a constant $\lambda_0>0$ such that for any $\lambda\geq\lambda_0$ there exist constants $0<c_{\lambda}$, $t_{\lambda}<1$ such that
\begin{equation}\label{comparison-between-mathcal-B-M-at-t-lambda-and-U-lambda-at-0}
c_{\lambda}\mathcal{B}_M\left(x,t_{\lambda}\right)\leq U_{\lambda}(x,0) \qquad \forall x\in\R^n,\,\,\lambda\geq \lambda_0.
\end{equation}
Here, 
\begin{equation}\label{eq-limit-of-c-lambda-to-1-and-t-lambda-to-0-as-lambda-to-infty}
c_{\lambda}\to 1 \qquad \mbox{and} \qquad t_{\lambda}\to 0 \qquad \mbox{as $\lambda\to\infty$}. 
\end{equation}
By \eqref{comparison-between-mathcal-B-M-at-t-lambda-and-U-lambda-at-0} and the maximum principle for porous medium equation, \cite{Va1}, we have
\begin{align}
&c_{\lambda}\mathcal{B}_M\left(x,t+t_{\lambda}\right)\leq U_{\lambda}(x,t) \qquad \forall x\in\R^n,\,\,t>0,\,\,\lambda\geq \lambda_0\notag\\
&\qquad \Rightarrow \qquad  c_{\lambda}\mathcal{B}_M\left(x,t_0+t_{\lambda}\right)\leq U_{\lambda}(x,t_0) \qquad \forall x\in\R^n,\,\,\lambda\geq \lambda_0\label{comparison-between-mathcal-B-M-at-t-0+t-lambda-and-U-lambda-at-t-0}
\end{align}
for any $t_0>0$. Observe  that
\begin{equation}\label{eq-increasing-of-support-of-barrenblatt-B-M-as-t-increases}
\textbf{supp}\,\mathcal{B}_{M}\left(x,t_0\right)\subset \textbf{supp}\,\mathcal{B}_{M}\left(x,t_0+t_{\lambda}\right) \qquad \forall \lambda\geq\lambda_0.
\end{equation}
Since $\mathcal{B}_M$ is continuous in $\R^n\times(0,\infty)$, by \eqref{eq-limit-of-c-lambda-to-1-and-t-lambda-to-0-as-lambda-to-infty} and \eqref{eq-increasing-of-support-of-barrenblatt-B-M-as-t-increases} there exists a constant $\lambda_1(t_0)>\lambda_0$ such that
\begin{equation}\label{eq-small-difference-between-barenblatt-solutions-when-times-difference-is-small}
c_{\lambda}\geq \frac{3}{4}\qquad \mbox{and}\qquad \frac{2}{3}\mathcal{B}_M\left(x,t_0\right)\leq \mathcal{B}_M\left(x,t_0+t_{\lambda}\right) \qquad \forall \lambda\geq\lambda_1.
\end{equation}
By \eqref{comparison-between-mathcal-B-M-at-t-0+t-lambda-and-U-lambda-at-t-0} and \eqref{eq-small-difference-between-barenblatt-solutions-when-times-difference-is-small},
\begin{equation}\label{eq-compare-between-half-of-barrenblatt-solution-and-U-lambda-at-t-0}
\frac{1}{2}\mathcal{B}_{M}(x,t_0)\leq U_{\lambda}(x,t_0) \qquad \forall\lambda\geq\lambda_1.
\end{equation}
By \eqref{eq-compare-between-half-of-barrenblatt-solution-and-U-lambda-at-t-0} and the maximum principle for porous medium equation, \cite{Va1}, we have
\begin{equation}\label{eq-compare-between-half-of-barrenblatt-solution-and-U-lambda-after-t-0}
\frac{1}{2}\mathcal{B}_{M}(x,t)\leq U_{\lambda}(x,t) \qquad \forall t\geq t_0,\,\,\lambda\geq\lambda_1.
\end{equation}
Multiplying the first equation in \eqref{eq-for-each-population-u-lambda-after-rescaling} by $u_{\lambda}$ and integrating over $\R^n\times(t_0,t)$ for all $t>t_0$, the we have
\begin{align}
&\int_{\R^n}u_{\lambda}^2(x,t)\,dx+m\int_{t_0}^{t}\int_{\R^n}U_{\lambda}^{m-1}\left|\nabla u_{\lambda}\right|^2\,dxd\tau= \int_{\R^n}u_{\lambda}^2(x,t_0)\,dx\notag\\
& \Rightarrow \qquad \int_{t_0}^{t}\int_{\R^n}U_{\lambda}^{m-1}\left|\nabla u_{\lambda}\right|^2\,dxd\tau\leq C\left(\left\|u_{\lambda}(t_0)\right\|_{L^2}\right) \qquad \forall t\geq t_0>0\notag\\
& \Rightarrow \qquad \int_{t_0}^{t}\int_{\R^n}\mathcal{B}_M^{m-1}\left|\nabla u_{\lambda}\right|^2\,dxd\tau\leq C\left(\left\|u_{\lambda}(t_0)\right\|_{L^2}\right) \qquad \forall t\geq t_0>0,\,\,\lambda\geq\lambda_1.\label{eq-equi-bound-of-u-lambda-in-weight-H-1-with-weight-Barrenblatt-M}
\end{align}

\subsection{Limit function of solution $u$}

As the first result of the convergence, we prove that there exists an uniform convergence  between $u$ and $\frac{M_0}{M}\mathcal{B}_M$ in $L^p$.
 
\begin{proof}[\textbf{Proof of Theorem \ref{eq-convergence-in-L-1-and-L-infty-of-u-to-some-constant-Barenblatt}}]
We will use a modification of the proof of Theorem 18.1 of \cite{Va1}. For any $\lambda>0$, let $u_{\lambda}$, $U_{\lambda}$ be given by \eqref{eq-scaling-of-u-and-U-by-lambda-0}. By \eqref{eq-general-L-p-bound-of-u-lambda-at-each-time-t-except-zero}, the family $\left\{u_{\lambda}\right\}_{\lambda>0}$ is uniformly bounded in $\R^n\times\left(t_0,\infty\right)$ for any $t_0>0$. Thus $\left\{u_{\lambda}\right\}_{\lambda>0}$ is relatively compact in $L^1_{loc}\left(\R^n\times(0,\infty)\right)$. Therefore for sequence $\lambda_n\to \infty$ as $n\to\infty$, the sequence $\left\{u_{\lambda_n}\right\}$ has a subsequence which we may assume without loss of generality to be the sequence itself that converges in $L^1_{loc}\left(\R^n\times(0,\infty)\right)$ to some function $u_{\infty}$ in $\R^n\times(0,\infty)$ as $n\to\infty$.\\ 
\indent Let $0<t_0<t_1$ and let $\vp\in C^{\infty}_0\left(\,\R^n\times(0,\infty)\right)$ be a test function such that 
\begin{equation*}
\vp(\cdot,t)=0 \qquad \forall 0<t<t_0,\,\,t>t_1.
\end{equation*}
Multiplying the first equation in \eqref{eq-for-each-population-u-lambda-after-rescaling} by $\vp\in C^{\infty}_0\left(\,\R^n\times(0,\infty)\right)$ and integrating over $\R^n\times(0,\infty)$, we have
\begin{align}
&m\int_0^{\infty}\int_{\R^n}U_{\lambda}^{m-1}\nabla u_{\lambda}\cdot\nabla\vp\,dxdt-\int_0^{\infty}\int_{\R^n}u_{\lambda}\vp_t\,dxdt=0\notag\\
&\qquad \Rightarrow \qquad m\int_0^{\infty}\int_{\R^n}\mathcal{B}_{M}^{m-1}\nabla u_{\lambda}\cdot\nabla\vp\,dxdt\notag\\
&\qquad \qquad \qquad \qquad +m\int_0^{\infty}\int_{\R^n}\left(U_{\lambda}^{m-1}-\mathcal{B}_M\right)\nabla u_{\lambda}\cdot\nabla\vp\,dxdt -\int_0^{\infty}\int_{\R^n}u_{\lambda}\vp_t\,dxdt=0. \label{eq-energy-type-eq-for-u-lambda-as-test-function-u-lambda-p-and-vp-before-lambda-to-infty}
\end{align}
Let $\lambda_1>0$ be given by \eqref{eq-small-difference-between-barenblatt-solutions-when-times-difference-is-small} and let $\epsilon>0$.  Then by \eqref{eq-compare-between-half-of-barrenblatt-solution-and-U-lambda-after-t-0},
\begin{equation}\label{eq-subset-mathcal-B-M-of-U-lambda-after-t-0}
\textbf{supp}\,\mathcal{B}_M(\cdot,t) \subset\textbf{supp}U_{\lambda}(\cdot,t) \qquad \forall t\geq t_0,\,\,\lambda\geq\lambda_1.
\end{equation}  
By Lemma \ref{lem-Theorem-2-8-of-LV}, there exists a constant $\lambda_2\geq\lambda_1$ such that
\begin{equation}\label{measure-of-small-set-supp-U-lambda-bs-supp-mathcal-B-M}
\left|\textbf{supp}U_{\lambda}(t)\,\bs \textbf{supp}\,\mathcal{B}_M(t)\,\right|<\epsilon \qquad \forall t\in\left[t_0,t_1\right]\,\,\lambda\geq\lambda_2
\end{equation}
and 
\begin{equation}\label{measure-of-small-set-supp-U-lambda-bs-supp-mathcal-B-M-1}
\left|U^{m-1}_{\lambda}(x,t)-\mathcal{B}^{m-1}_M(x,t)\,\right|<\epsilon \qquad \forall x\in\R^n,\,\, t\in\left[t_0,t_1\right]\,\,\lambda\geq\lambda_2.
\end{equation}
Let
\begin{equation*}
E_{\mathcal{B}_M,t_0,t_1}=\left\{(x,t)\in\R^n\times\left[t_0,t_1\right]:\mathcal{B}_M(x,t)>0\right\}
\end{equation*}
and let $\mathcal{K}$ be a compact subset of $E_{\mathcal{B}_M,t_0,t_1}$ such that
\begin{equation}\label{measure-of-small-set-E-mathcal-B-M-t-0-t-1-bs-mathcal-K-compact-subset}
\left|E_{\mathcal{B}_M,t_0,t_1}\bs\mathcal{K}\right|<\epsilon.
\end{equation}
By Lemma \ref{lem-Theorem-2-8-of-LV}, there exists a constant $\lambda_3>\lambda_2$ such that  $\left\{U_{\lambda}\right\}_{\lambda\geq\lambda_3}$ is uniformly parabolic in $\mathcal{K}$. Then by standard Schauder estimates for parabolic partial differential equation, \cite{LSU}, there exists a constant $C_{\mathcal{K}}<\infty$ such that
\begin{equation}\label{eq-uniform-bound=of-gradient-of-u-lambda-in-mathcal-K}
\left|\nabla u_{\lambda}\right|\leq C_{\mathcal{K}} \qquad \forall \lambda\geq\lambda_3,\,\,(x,t)\in\mathcal{K}.
\end{equation} 
By \eqref{eq-equi-bound-of-u-lambda-in-weight-H-1-with-weight-Barrenblatt-M}, \eqref{eq-subset-mathcal-B-M-of-U-lambda-after-t-0}, \eqref{measure-of-small-set-supp-U-lambda-bs-supp-mathcal-B-M}, \eqref{measure-of-small-set-supp-U-lambda-bs-supp-mathcal-B-M-1}, \eqref{measure-of-small-set-E-mathcal-B-M-t-0-t-1-bs-mathcal-K-compact-subset} and \eqref{eq-uniform-bound=of-gradient-of-u-lambda-in-mathcal-K},
\begin{equation*}
\begin{aligned}
&\left|\int_0^{\infty}\int_{\R^n}\left(U_{\lambda}^{m-1}-\mathcal{B}^{m-1}_M\right)\nabla u_{\lambda}\cdot\nabla\vp\,dxdt\right|\\
&\qquad \qquad \leq \int_0^{\infty}\int_{\R^n}\left|U_{\lambda}^{m-1}-\mathcal{B}^{m-1}_M\right|\nabla u_{\lambda}\cdot\nabla\vp\,dxdt\\
&\qquad \qquad \leq \iint_{\mathcal{K}}\left|U_{\lambda}^{m-1}-\mathcal{B}^{m-1}_M\right|\nabla u_{\lambda}\cdot\nabla\vp\,dxdt+\iint_{E_{\mathcal{B}_M,t_0,t_1}\bs\mathcal{K}}\left(U_{\lambda}^{m-1}+\mathcal{B}^{m-1}_M\right)\nabla u_{\lambda}\cdot\nabla\vp\,dxdt\\
&\qquad \qquad \qquad \qquad +\int_{t_0}^{t_1}\int_{\textbf{supp}\,U_{\lambda}(t)\bs\textbf{supp}\,\mathcal{B}^{m-1}_M(t)}U_{\lambda}^{m-1}\nabla u_{\lambda}\cdot\nabla\vp\,dxdt\\
&\qquad \qquad \leq C_{\mathcal{K}}\left|\mathcal{K}\right|\left\|\nabla\vp\right\|_{L^{\infty}}\epsilon+C\left(\left\|u_{\lambda}(t_0)\right\|_{L^2}\right)\left(\left(1+\sqrt{t_1-t_0}\right)\left\|U_{\lambda}\right\|^{\frac{m-1}{2}}_{L^{\infty}}+\left\|\mathcal{B}_{M}\right\|^{\frac{m-1}{2}}_{L^{\infty}}\right)\left\|\nabla\vp\right\|_{L^{\infty}}\epsilon^{\frac{1}{2}} \qquad \forall \lambda\geq\lambda_3.
\end{aligned}
\end{equation*}
Since $\epsilon$ is arbitrary,
\begin{equation}\label{eq-convergence-of-extra-term-with-U-lambda-minus-Barenbaltt-B-M}
\int_0^{\infty}\int_{\R^n}\left(U_{\lambda}^{m-1}-\mathcal{B}^{m-1}_M\right)\nabla u_{\lambda}\cdot\nabla\vp\,dxdt\to 0 \qquad \mbox{as $\lambda\to0$}.
\end{equation}
By \eqref{eq-equi-bound-of-u-lambda-in-weight-H-1-with-weight-Barrenblatt-M},
\begin{equation}\label{eq-uniform-locally-convergence-of-in-L-p-infty-H-1}
\begin{cases}
\begin{aligned}
u_{\lambda}&\to u_{\infty} \qquad \qquad  \mbox{locally in $L^{1}$}\\
\nabla u_{\lambda}&\to\nabla u_{\infty} \qquad \mbox{locally in $L^2$ with weight $\mathcal{B}_M^{m-1}$}
\end{aligned}
\end{cases}
\end{equation}
Letting $\lambda\to\infty$ in \eqref{eq-energy-type-eq-for-u-lambda-as-test-function-u-lambda-p-and-vp-before-lambda-to-infty}, by \eqref{eq-convergence-of-extra-term-with-U-lambda-minus-Barenbaltt-B-M} and \eqref{eq-uniform-locally-convergence-of-in-L-p-infty-H-1} we have
\begin{equation*}\label{eq-energy-type-eq-for-u-lambda-as-test-function-u-lambda-p-and-vp-after-lambda-to-infty}
\begin{aligned}
m\int_0^{\infty}\int_{\R^n}\mathcal{B}_M^{m-1}\nabla u_{\infty}\cdot\nabla\vp\,dxdt-\int_0^{\infty}\int_{\R^n}u_{\infty}\vp_t\,dxdt=0.
\end{aligned}
\end{equation*}
Thus   
\begin{equation}\label{eq-u-infty-weak-soluiton-of-equation-removing-u-to-p}
\mbox{$u_{\infty}$ is a weak solution of $u_t-\nabla\left(m\,\mathcal{B}_M^{m-1}\nabla u\right)=0$} \qquad \mbox{in $\R^n\times(0,\infty)$}.
\end{equation}
By an argument similar to the proofs of Lemma 18.4 and Lemma 18.6 of \cite{Va1}, we can also have
\begin{equation}\label{eq-initial-conditions-of-u-lambda-as-lambda0to-infty-and-u-infty-x-t-as-t-to-zoer}
u_{0,\lambda}(x)\to M_0\delta(x) \quad \mbox{as $\lambda\to\infty$}  \qquad \mbox{and} \qquad u_{\infty}(x,t)\to M_0\delta(x) \qquad \mbox{as $t \to 0$}
\end{equation}
By \eqref{eq-basic-condition-between-u-lambda-and-U-lambda}, \eqref{eq-u-infty-weak-soluiton-of-equation-removing-u-to-p} and \eqref{eq-initial-conditions-of-u-lambda-as-lambda0to-infty-and-u-infty-x-t-as-t-to-zoer}, $u_{\infty}$ is a solution of \eqref{eq-equation-with-barenblatt-sol-as-diffusion-coefficients} which satisfies \eqref{eq-uniformly-upper-bound-of-u-by-B-M} and \eqref{eq-mass-conservation-of-solution-of=diffusion-coefficients-B-M}. Thus by \eqref{eq-general-L-1-bound-of-u-lambda-at-each-time-t-except-zero} and Theorem \ref{thm-for-uniqueness-of-main-convergence},
\begin{equation}\label{eq-equality-of-u-infty-to-M-0-over-M-times-B-M}
u_{\infty}(x,t)=\frac{M_0}{M}\mathcal{B}_M(x,t) \qquad \forall (x,t)\in\R^n\times\left[0,\infty\right).
\end{equation}
By \eqref{eq-equality-of-u-infty-to-M-0-over-M-times-B-M} and an argument similar to the the proof of Theorem 2.8 of \cite{LV}, we have \eqref{eq-convergence-in-L-1-from-u-to-barrenblatt-M-0-over-M-B-M}, \eqref{eq-convergence-in-L-infty-from-u-to-barrenblatt-M-0-over-M-B-M} and the theorem follows.
\end{proof}

\subsection{$C_s^{\infty}$-convergence}
We finish this section by improving Theorem \ref{eq-convergence-in-L-1-and-L-infty-of-u-to-some-constant-Barenblatt} (the uniform convergence in $L^p$, $p\geq 1$) up to $C_s^{\infty}$-convergence. Suppose that $U_0$ and $u_0$ satisfy the \textbf{Conditions for $C^{\infty}_s$-convergence} given in Introduction. Then, by \eqref{eq-non-degeneracy-of-pressure-of-sol-u} we can choose a sufficiently small constant $\epsilon_1>0$ such that
\begin{equation*}
\epsilon_1U_0(x)\leq u_0 \qquad \forall x\in\R^n.
\end{equation*}
Then, by \eqref{eq-basic-property-of-u-smaller-than-U} and the maximum principle for porous medium equation we can get the following lemma.
\begin{lemma}\label{lem-diffusion-coefficient-U-trapped-in-between-u-and-1-over-epsilon-u}
Under the \textbf{Conditions for $C_s^{\infty}$-convergence}, there exists a constant $\epsilon_1>0$ such that
\begin{equation*}
0\leq u(x,t)\leq U(x,t)\leq \frac{1}{\epsilon_1}u(x,t) \qquad \forall (x,t)\in\R^n\times[0,\infty).
\end{equation*}
\end{lemma}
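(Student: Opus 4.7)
The plan is as follows. Since $u\le U$ is already known from \eqref{eq-basic-property-of-u-smaller-than-U}, the lemma reduces to producing an $\epsilon_1>0$ with $\epsilon_1 U(x,t)\le u(x,t)$ on $\R^n\times[0,\infty)$. I will establish this in two stages: first an initial pointwise comparison $\epsilon_1 U_0\le u_0$, then propagation to positive times via comparison for the linear equation satisfied by both $u$ and $\epsilon_1 U$.

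For the initial step, both $u_0$ and $U_0$ vanish outside $\overline{\Omega_0}$ (by the support hypothesis), so only $\overline{\Omega_0}$ is at issue. Consider the pressures $w_0=u_0^{m-1}$ and $W_0=U_0^{m-1}$, which by hypothesis lie in $C^1(\overline{\Omega_0})$ and both vanish on $\partial\Omega_0$. Along $\partial\Omega_0$ the non-degeneracy condition \eqref{eq-non-degeneracy-of-pressure-of-sol-u} forces $|\nabla w_0|,|\nabla W_0|\in[1/K,K]$, so both pressures vanish exactly linearly in $d(\cdot,\partial\Omega_0)$, with comparable rates; consequently $w_0/W_0$ is bounded above and below by positive constants in some tubular neighborhood of $\partial\Omega_0$. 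On the compact complement inside $\Omega_0$, both $w_0$ and $W_0$ are continuous and strictly positive, hence the ratio is again bounded. Combining the two regions yields $u_0^{m-1}\ge c\,U_0^{m-1}$ on $\overline{\Omega_0}$ for some $c>0$, so the choice $\epsilon_1=c^{1/(m-1)}$ gives $\epsilon_1 U_0\le u_0$ on $\R^n$.

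For propagation, set $v=\epsilon_1 U$. A direct computation, using the fact that $U$ solves $U_t=\nabla\cdot(mU^{m-1}\nabla U)$, gives
\begin{equation*}
v_t=\epsilon_1 U_t=\epsilon_1\nabla\cdot(mU^{m-1}\nabla U)=\nabla\cdot(mU^{m-1}\nabla v),
\end{equation*}
so $v$ is a weak solution of \eqref{eq-for-each-population-u-i} with the same diffusion coefficient $U^{m-1}$ as $u$, and with initial datum $v(\cdot,0)=\epsilon_1 U_0\le u_0$. The one-sided version of the energy computation already used in Lemma \ref{lem-uniquness-of-weak-solution-2}, obtained by testing the equation for $v-u$ against $(v-u)_+$, then yields $(v-u)_+\equiv 0$ on $\R^n\times(0,\infty)$; equivalently, $\epsilon_1 U\le u$ for all $t\ge 0$. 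Combined with $u\le U$ from \eqref{eq-basic-property-of-u-smaller-than-U}, this gives the desired two-sided bound.

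The main obstacle is the initial comparison near $\partial\Omega_0$: without the non-degeneracy hypothesis \eqref{eq-non-degeneracy-of-pressure-of-sol-u} the ratio $u_0/U_0$ could collapse at the free boundary (if, say, $u_0$ vanished to higher order than $U_0$ at some boundary point), and no uniform $\epsilon_1>0$ would exist. The two-sided gradient bound in \eqref{eq-non-degeneracy-of-pressure-of-sol-u} is precisely what pins both pressures to the same linear decay rate at $\partial\Omega_0$ and makes the pointwise initial comparison quantitative. The propagation step is comparatively routine, since the equation in $v-u$ is linear with a non-negative (possibly degenerate) divergence-form coefficient, for which the comparison principle follows from the same energy identity as in Lemma \ref{lem-uniquness-of-weak-solution-2}; if one wishes to avoid invoking that energy argument directly at the degeneracy set, one can approximate $U$ by $U+\delta$ in the style of Lemma \ref{lem-existence-of-soluiton-u-by-u-epsilon-M-weak-convergence} and pass to the limit $\delta\to 0$.
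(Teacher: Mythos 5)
Your proposal is correct and follows essentially the same route the paper sketches: use the non-degeneracy condition \eqref{eq-non-degeneracy-of-pressure-of-sol-u} to obtain the initial pointwise inequality $\epsilon_1 U_0\le u_0$, then propagate it forward by comparison, noting that $\epsilon_1 U$ solves the same linear equation \eqref{eq-for-each-population-u-i} as $u$. The only difference is that you spell out the boundary/interior split for the initial comparison and identify the comparison step with the energy argument of Lemma \ref{lem-uniquness-of-weak-solution-2}, both of which the paper leaves implicit.
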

Denote by $v$ and $V$ the pressures of $u$ and $U$ respectively, i.e.,
\begin{equation*}
v(x,t)=mu^{m-1}(x,t) \qquad \mbox{and} \qquad V(x,t)=mU^{m-1}(x,t) \qquad \forall (x,t)\in\R^n\times\left[0,\infty\right). 
\end{equation*}
Then 
\begin{align}
u_t&=\nabla\left(A\,\nabla u^m\right) \qquad \mbox{in $\R^n\times(0,\infty)$}\notag\\
& \Rightarrow \qquad v_t=A\left(v\,\La v+\frac{1}{m-1}\left|\nabla v\right|^2\right)+v\,\nabla A\cdot\nabla v \qquad \mbox{in $\R^n\times(0,\infty)$}\label{eq-equation-for-pressure-v-3}
\end{align}
where $A=\left(\frac{U}{u}\right)^{m-1}$.\\
\indent To explain the concept of $C_s^{\infty}$-convergence, we first review the metric $ds$ and the space $C^{\infty}_s$ introduced in \cite{DH}. Consider the change of coordinates by which the free boundary $v=0$ has been transformed into the fixed boundary. By Implicit Function Theorem, we can solve the equation $z=v(x_1,\cdots,x_{n-1},x_n,t)$ with respect to $x_n$ locally around the points $\left(x^0_1,\cdots,x^0_{n-1},x^0_n,t^0\right)$ on free boundary, i.e., for sufficiently small $\eta>0$ there exists a function $x_n=h(x_1,\cdots,x_{n-1},z,t)$ defined on a small box 
\begin{equation*}
\mathcal{B}_{\eta}=\left\{0\leq z\leq \eta,\left|x_i-x^0_i\right|\leq\eta,-\eta\leq t-t^0\leq 0\right\} \qquad \forall i=1,\cdots,n-1.
\end{equation*} 
On the set $\mathcal{B}_{\eta}$,
\begin{equation}\label{eq-function-h-from-v-by-Implicit-Function-Theorem} 
z=v\left(x',h\left(x',z,t\right),t\right) \qquad \qquad \big(x'=\left(x_1,\cdots,x_{n-1}\right)\big).
\end{equation}
Thus by simple computation, we have
\begin{equation}\label{eq-relation-of-v-and-h-differentiables-etc}
\begin{aligned}
v_{x_n}&=\frac{1}{h_z}, \quad v_{x_i}=-\frac{h_{x'}}{h_z}, \quad v_t=-\frac{h_{t}}{h_z}\\
v_{x_nx_n}=-\frac{h_{zz}}{h_z^3},&\quad v_{x_ix_i}=-\frac{1}{h_z}\left(\frac{h_{x_i}^2}{h_z^2}h_{zz}-\frac{2h_{x_i}}{h_z}h_{x_iz}+h_{x_ix_i}\right) \qquad \forall i=1,\cdots,n-1.
\end{aligned}
\end{equation}
Then, by \eqref{eq-equation-for-pressure-v-3} and \eqref{eq-relation-of-v-and-h-differentiables-etc} $h$ satisfies
\begin{align}
 h_t&=A\,z\La_{x'}h+A\,z^{-\sigma}\left(z^{1+\sigma}F\left(\nabla h\right)\right)_z+z\nabla_{x'}A\cdot\nabla_{x'}h+zA_z\,F\left(\nabla h\right)\notag\\
 &=z^{-\sigma}\nabla_{x'}\left(Az^{1+\sigma}\nabla_{x'}h\right)+\,z^{-\sigma}\left(Az^{1+\sigma}F\left(\nabla h\right)\right)_z\label{eq-equ-for-h-transformed-into-fixed-boundary-z-equal-0}
\end{align}
where
\begin{equation*}
\sigma=\frac{1}{m-1}-1 \qquad \mbox{and} \qquad F\left(\nabla h\right)=-\frac{1+\left|\nabla_{x'}h\right|^2}{h_z}.
\end{equation*}
By \eqref{eq-basic-property-of-u-smaller-than-U} and Lemma \ref{lem-diffusion-coefficient-U-trapped-in-between-u-and-1-over-epsilon-u}, $A$ is uniformly parabolic in $\R^n\times(0,\infty)$. Therefore, by an argument similar to the paper \cite{DH} it can be easily checked that the equation \eqref{eq-equ-for-h-transformed-into-fixed-boundary-z-equal-0} is governed by the Riemannian metric $ds$ where
\begin{equation*}
ds^2=\frac{dx_1^2+\cdots+dx_{n-1}^2+dz^2}{2z}.
\end{equation*}
The distance between two points $P_1=\left(x_1^1,\cdots,x_{n-1}^1,z^1,t^1\right)$ and $P_2=\left(x_1^2,\cdots,x_{n-1}^2,z^2,t^2\right)$ in this metric is equivalent to the function
\begin{equation*}
\overline{s}\left[P_1,P_2\right]=\frac{\sum_{i=1}^{n-1}\left|x_i^1-x_i^2\right|+\left|z^1-z^2\right|}{\sum_{i=1}^{n-1}\sqrt{x_i}+\sqrt{\left|z^1-z^2\right|}}+\sqrt{\left|t^1-t^2\right|}.
\end{equation*}
Under this distance, H\"older semi-norm, $C_s^{\alpha}$ norm and $C_s^{2+\alpha}$ norm of a function $f$ defined on a compact subset $\mathcal{A}$ of the half space $\left\{(x_1,\cdots,x_{n-1},z,t):z\geq 0\right\}$ are given as follow.
\begin{equation*}
\begin{aligned}
\left\|f\right\|_{H^{\alpha}_s\left(\mathcal{A}\right)}&=\sup\left\{\frac{\left|f(P_1)-f(P_2)\right|}{s\left[P_1,P_2\right]^{\alpha}}\,:\,\forall P_1,\,P_2\in\mathcal{A}\right\}\\
\left\|f\right\|_{C^{\alpha}_s\left(\mathcal{A}\right)}&=\left\|f\right\|_{L^{\infty}\left(\mathcal{A}\right)}+\left\|f\right\|_{H^{\alpha}_s\left(\mathcal{A}\right)}\\
\left\|f\right\|_{C^{2+\alpha}_s\left(\mathcal{A}\right)}&=\left\|f\right\|_{C^{\alpha}_s\left(\mathcal{A}\right)}+\sum_{i=1}^{n-1}\left\|f_{x_i}\right\|_{C^{\alpha}_s\left(\mathcal{A}\right)}+\left\|f_z\right\|_{C^{\alpha}_s\left(\mathcal{A}\right)}+\left\|f_t\right\|_{C^{\alpha}_s\left(\mathcal{A}\right)}\\
&\qquad +\sum_{i,j=1}^{n-1}\left\|zf_{x_ix_j}\right\|_{C^{\alpha}_s\left(\mathcal{A}\right)}+\sum_{i=1}^{n-1}\left\|zf_{x_iz}\right\|_{C^{\alpha}_s\left(\mathcal{A}\right)}+\left\|zf_{zz}\right\|_{C^{\alpha}_s\left(\mathcal{A}\right)}.
\end{aligned}
\end{equation*}
The concept of $C_s^{\infty}$ space can be obtained by extending these definitions to spaces of higher order derivatives. For any $k\in\N$, we denote by $C_s^{k,\epsilon_1+\alpha}\left(\mathcal{A}\right)$, $\left(\epsilon_1=0,\,2\right)$, the space of all functions $f$ whose $k$-th  order derivatives $D_{x_1}^{i_1}\cdots D_{x_{n-1}}^{i_{n-1}}D_z^jD_t^lf$, $\left(i_1+\cdots+i_{n-1}+j+l=k\right)$, exists and belong to the space of $C_{s}^{\epsilon_1+\alpha}\left(\mathcal{A}\right)$. Then we say that a function $f$ belongs to the space $C_s^{\infty}\left(\mathcal{A}\right)$ by
\begin{equation*}
f\in C_s^{\infty}\left(\mathcal{A}\right)\qquad \iff \qquad f\in C_s^{k,2+\alpha}\left(\mathcal{A}\right) \qquad \forall k\in\N.
\end{equation*}
\indent From now on, we are going to focus on $C_s^{\infty}$-convergence. For any $\lambda>0$, let $v_{\lambda}$ be the rescaled function of $v$ by
\begin{equation*}
v_{\lambda}(x,t)=\lambda^{(m-1)a_1}v\left(\lambda^{a_2}x,\lambda t\right), \qquad \forall \lambda>0,\,\,(x,t)\in\R^n\times(0,\infty)
\end{equation*}
where the exponents $a_1$ and $a_2$ are given by \eqref{eq-constant-alpha-1-beta-1-k}. Let $h_{\lambda}$ be the function from \eqref{eq-function-h-from-v-by-Implicit-Function-Theorem}  with $v$ being replaced by $v_{\lambda}$. By Theorem 4.3 of \cite{LV}, there exists a constant $\lambda_0>0$ such that
\begin{equation}\label{eq-expression-of-C-1-alpha-smooth-of-free-boundary-of-v-lambda}
\mbox{The free boundary $\partial\left\{(x,t):v_{\lambda}(x,t)>0\right\}$ is $C^{1,\alpha}$ surface for all $\lambda>\lambda_0$}.
\end{equation}
By \eqref{eq-basic-property-of-u-smaller-than-U} and Lemma \ref{lem-diffusion-coefficient-U-trapped-in-between-u-and-1-over-epsilon-u}, the coefficients $A(x,t)$ is uniformly parabolic in $\R^n\times(0,\infty)$. Moreover, by an argument similar to the proof of Theorem \ref{cor-local-holder-estimates-of-u-i-s-in-solution-bold-u} we have
\begin{equation}\label{eq-C-s-alpha-continuity-of-solution-v-lambda-and-h-lambda}
A\in C_s^{\,\alpha}.
\end{equation}
Thus the equation \eqref{eq-equation-for-pressure-v-3} belongs to the same class of equations studied in \cite{Ko}. Hence by \eqref{eq-C-s-alpha-continuity-of-solution-v-lambda-and-h-lambda} and an argument similar to the proof of Theorem 5.6.1 in \cite{Ko}, $h_{\lambda}$ have the $C_s^{1,\alpha}$-estimates up to the boundary. Applying the standard bootstrap argument, we can even get $C_s^{\,k,\alpha}$-estimates of $h_{\lambda}$ for any $k\in\N$. Therefore, we can get the uniform estimate of derivatives of $v_{\lambda}$ with respect to the original $(x,t)$. 
\begin{thm}[cf. Theorem 3.1 of \cite{LV}]\label{thm-theorem-3-1-of-cite-LV}
For every $k\in\N$, there exist constants $\lambda_k>0$ and $C_k>0$ such that
\begin{equation*}
\left\|v_{\lambda}\right\|_{C_{s}^{k}\left(\overline{\Omega_0\left(u_{\lambda}\right)}\right)}<C_k \qquad \forall \lambda>\lambda_k
\end{equation*}
where
\begin{equation*}
\Omega_0\left(v_{\lambda}\right)=\left\{(x,t):v_{\lambda}(x,t)>0,\,\,1<t<2\right\}.
\end{equation*}
\end{thm}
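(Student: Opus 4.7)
Fix $k\in\N$. My plan is to localize at each point of the free boundary $\partial\{v_\lambda>0\}\cap\overline{\Omega_0(v_\lambda)}$, flatten the boundary by means of the implicitly defined function $h_\lambda$ satisfying $z=v_\lambda(x',h_\lambda(x',z,t),t)$, and obtain uniform Schauder estimates for $h_\lambda$ from the half-space theory developed in \cite{Ko} for the degenerate equation
\begin{equation*}
(h_\lambda)_t=z^{-\sigma}\nabla_{x'}\!\left(A_\lambda\,z^{1+\sigma}\nabla_{x'}h_\lambda\right)+z^{-\sigma}\left(A_\lambda\,z^{1+\sigma}F(\nabla h_\lambda)\right)_z,
\end{equation*}
where $A_\lambda=(U_\lambda/u_\lambda)^{m-1}$, $F(\nabla h_\lambda)=-(1+|\nabla_{x'}h_\lambda|^2)/(h_\lambda)_z$ and $\sigma=\tfrac{1}{m-1}-1$. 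Once $A_\lambda$ is shown to be Hölder continuous with bounds independent of $\lambda$, Koch's estimate delivers $\|h_\lambda\|_{C_s^{1,\alpha}}\le C$ up to the fixed boundary $\{z=0\}$ with $C$ uniform for $\lambda>\lambda_0$. A bootstrap, obtained by differentiating the equation in the tangential variables $x'$ and in $t$ and re-applying Koch's Schauder estimate at each stage, then yields $\|h_\lambda\|_{C_s^{k,\alpha}}\le\widetilde C_k$. Interior to $\{v_\lambda>0\}$ the original equation \eqref{eq-equation-for-pressure-v-3} is uniformly parabolic, and classical parabolic Schauder theory supplies the matching interior estimates. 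Finally, the explicit formulas in \eqref{eq-relation-of-v-and-h-differentiables-etc} transport derivatives of $h_\lambda$ into derivatives of $v_\lambda$, completing the desired $C_s^k$ bound on $\overline{\Omega_0(v_\lambda)}$.

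\textbf{Verification of hypotheses.} Three items must be checked uniformly in $\lambda$. First, uniform parabolicity: Lemma \ref{lem-diffusion-coefficient-U-trapped-in-between-u-and-1-over-epsilon-u}, being invariant under the self-similar rescaling \eqref{eq-scaling-of-u-and-U-by-lambda-0}, forces $\epsilon_1^{m-1}\le A_\lambda\le 1$ pointwise. Second, $C^{1,\alpha}$ regularity and non-degeneracy of the free boundary: the Non-degeneracy clause of the \textbf{Conditions for $C_s^\infty$-convergence}, combined with Theorem 4.3 of \cite{LV}, produces a uniformly $C^{1,\alpha}$ free boundary and keeps $|\nabla v_\lambda|$ bounded away from zero near it for $\lambda>\lambda_0$; this guarantees that $h_\lambda$ is well defined on a box $\mathcal{B}_\eta$ of a size independent of $\lambda$ and that $(h_\lambda)_z$ remains uniformly away from zero, so that the nonlinearity $F(\nabla h_\lambda)$ is controlled. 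Third, $C_s^\alpha$ regularity of $A_\lambda$: the trap $u_\lambda\le U_\lambda\le u_\lambda/\epsilon_1$ puts $u_\lambda$ and $U_\lambda$ into the setting of Remark \ref{eq-remark-for-holder-estimates-for-function-u-i-s-in-solution-bold-u}, so both are locally Hölder continuous with a common modulus whose constants depend only on scale-invariant quantities; together with the two-sided pinching this yields $\|A_\lambda\|_{C_s^\alpha}\le C$ uniformly in $\lambda$.

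\textbf{Main obstacle.} The principal technical point is the last step, namely securing a uniform-in-$\lambda$ Hölder bound on $A_\lambda=(U_\lambda/u_\lambda)^{m-1}$, because numerator and denominator both vanish on the free boundary and one cannot quote a Hölder estimate for each factor separately. The rescue is twofold: Lemma \ref{lem-diffusion-coefficient-U-trapped-in-between-u-and-1-over-epsilon-u} confines the ratio to a fixed compact subinterval of $(0,\infty)$ regardless of distance to the boundary, and the Hölder constants furnished by Theorem \ref{cor-local-holder-estimates-of-u-i-s-in-solution-bold-u} depend only on the ratio $\Lambda/\theta_0^{\beta}$, which is invariant under the Barenblatt scaling \eqref{eq-scaling-of-u-and-U-by-lambda-0}. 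Once this uniform $C_s^\alpha$ control on $A_\lambda$ is established, the application of Koch's theorem and the subsequent bootstrap proceed exactly as in the proof of Theorem 3.1 in \cite{LV}, and the statement follows.
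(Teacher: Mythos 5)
Your overall route --- flatten the free boundary via the hodograph function $h_\lambda$ defined by $z=v_\lambda(x',h_\lambda(x',z,t),t)$, invoke Koch's half-space Schauder theory for the degenerate equation satisfied by $h_\lambda$, and bootstrap in the tangential and time variables --- is the paper's route, and you correctly isolate the crucial input: a $\lambda$-uniform $C_s^\alpha$ bound on the coefficient $A_\lambda=(U_\lambda/u_\lambda)^{m-1}$.

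Your justification of that bound, however, has a genuine gap. Knowing that $u_\lambda$ and $U_\lambda$ are each locally Hölder continuous with a $\lambda$-independent modulus, and that their ratio is pinched between fixed positive constants, does \emph{not} imply that the ratio is Hölder. Estimating the ratio by the separate Hölder estimates of its factors gives, for points $P_1,P_2$ near the free boundary,
\begin{equation*}
\left|\frac{U_\lambda}{u_\lambda}(P_1)-\frac{U_\lambda}{u_\lambda}(P_2)\right|
=\frac{\left|U_\lambda(P_1)u_\lambda(P_2)-U_\lambda(P_2)u_\lambda(P_1)\right|}{u_\lambda(P_1)\,u_\lambda(P_2)}
\leq \frac{C\,\overline{s}\!\left[P_1,P_2\right]^{\alpha}}{u_\lambda(P_2)},
\end{equation*}
and the right-hand side blows up as $P_2$ approaches the free boundary where $u_\lambda$ vanishes. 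The two-sided pinching from Lemma \ref{lem-diffusion-coefficient-U-trapped-in-between-u-and-1-over-epsilon-u} bounds the \emph{size} of the ratio, not its \emph{oscillation}; so this step as written fails, and the scale-invariance of the individual Hölder constants does not rescue it.

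The paper goes a different way at exactly this point: it obtains $A\in C_s^\alpha$ by ``an argument similar to the proof of Theorem \ref{cor-local-holder-estimates-of-u-i-s-in-solution-bold-u}'', i.e.\ by a direct intrinsic-scaling oscillation estimate applied to $A$ (equivalently to $W=U/u$) itself, not by combining estimates for the factors. This is possible because $W$ solves a self-contained equation: since $u$ and $U$ both satisfy $w_t=\nabla\cdot\left(mU^{m-1}\nabla w\right)$, a short computation gives
\begin{equation*}
u^{2}\,W_t=\nabla\cdot\left(mU^{m-1}u^{2}\,\nabla W\right),
\end{equation*}
a weighted divergence-form parabolic equation with no lower-order terms, whose bounded solution $W$ is Hölder in the intrinsic geometry by a DeGiorgi--Moser oscillation argument, with modulus depending only on the pinching constant $\epsilon_1$ and hence $\lambda$-uniform. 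Replacing your quotient step by a direct estimate of this kind closes the gap, and the remainder of your outline then matches the paper. (A minor slip: the pinching should read $1\le A_\lambda\le\epsilon_1^{1-m}$, not $\epsilon_1^{m-1}\le A_\lambda\le1$, since $u\le U\le u/\epsilon_1$ and $m>1$.)
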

We finish this work by proving the Theorem \ref{thm-Theorem-3-2-of-cite-LV}.
\begin{proof}[\textbf{Proof of Theorem \ref{thm-Theorem-3-2-of-cite-LV}}]
Since the proof is almost same as that of Theorem 3.2 of \cite{LV}, we will give a sketch of proof here. By Theorem \ref{eq-convergence-in-L-1-and-L-infty-of-u-to-some-constant-Barenblatt}, there is the uniform convergence such that
\begin{equation*}
v_{\lambda}(x,t)\to \left(\frac{M_0}{M}\mathcal{B}_M(x,t)\right)^{m-1}:=t^{-a_1(m-1)}G\left(\frac{x}{t^{a_2}}\right)  \qquad \mbox{as $\lambda\to\infty$}
\end{equation*}
where $a_1$ and $a_2$ are given by \eqref{eq-constant-alpha-1-beta-1-k}. This immediately gives us a parametrization of $v_{\lambda}$, i.e., there exists a function $g_{\lambda}(x,t)$ such that
\begin{equation*}
\left(x,v_{\lambda}(x,t)\right)=\left(x,t^{-a_1(m-1)}G\left(\frac{x}{t^{a_2}}\right)\right)+g_{\lambda}(x,t)N\left(\frac{x}{t^{a_2}}\right)
\end{equation*}
where $N(x)$ is a smooth unit vector field, transverse to the surface $\left(x,G(x)\right)$ and parallel to the $x$-plane in a neighborhood of the boundary $\partial\left\{x:G(x)>0\right\}$. By Theorem \ref{thm-theorem-3-1-of-cite-LV} and Arzel\'a-Ascoli Theorem, there exists the $C_s^{\infty}$-convergence between $v_{\lambda}$ and $ \left(\frac{M_0}{M}\mathcal{B}_M(x,t)\right)^{m-1}$ and the theorem follows.
\end{proof}
\noindent {\bf Acknowledgement:} Sunghoon Kim was supported by the Research Fund, 2019 of The Catholic University of Korea.
K. Lee has been supported by  grant funded by    Samsung Science and Technology Foundation under Project Number SSTF-BA170110840  Samsung Science \& Technology Foundation (SSTF) under Project Number SSTF-BA1701-03 .
K. Lee also holds a joint appointment with the Research Institute of Mathematics of Seoul National University.

\end{document}